\documentclass[12pt,reqno]{amsart}

\usepackage[a4paper,margin=1in]{geometry} 

\usepackage{amscd}
\usepackage{amssymb}
\usepackage{mathrsfs}
\usepackage{amsmath}
\usepackage{mathtools}
\usepackage{mathabx}
\usepackage{amsthm}
\usepackage[all,cmtip]{xy}
\usepackage{enumerate}
\usepackage{enumitem}
\usepackage{eucal}
\usepackage{ytableau}

\usepackage{color}
\usepackage{wasysym}

\usepackage{pifont}
\usepackage{graphicx}
\newcommand{\dualstar}{\mathord{\text{\scalebox{0.85}{\ding{73}}}}}
\newcommand{\medstar}{\mathord{\scalebox{1}{$\star$}}}  
\newcommand{\biggstar}{\mathord{\scalebox{1.2}{$\star$}}}

\allowdisplaybreaks

\usepackage[plainpages,urlcolor=red,linktocpage=true]{hyperref}

\numberwithin{equation}{section}

\newcommand{\thmref}[1]{Theorem~\ref{#1}}
\newcommand{\secref}[1]{Section~\ref{#1}}

\newcommand{\lemref}[1]{Lemma~\ref{#1}}
\newcommand{\propref}[1]{Proposition~\ref{#1}}

\newcommand{\eqnref}[1]{(\ref{#1})}

\newtheorem{theorem}{Theorem}[section]
\newtheorem{lemma}[theorem]{Lemma}
\newtheorem{proposition}[theorem]{Proposition}
\newtheorem{corollary}[theorem]{Corollary}

\theoremstyle{definition}

\newcommand{\Dpqn}{D^+_{p,q|n}}
\newcommand{\Ppqn}{P^+_{p,q|n}}
\newcommand{\Ppqnk}{\mathcal{P}^k_{p,q|n}}
\newcommand{\Ppqnd}{\mathcal{P}^d_{p,q|n}}
\newcommand{\Cpqnd}{\mathbb{C}^d_{p,q|n}}
\newcommand{\Dpqnd}{\mathbb{D}^d_{p,q|n}}

\theoremstyle{remark}
\newtheorem{remark}[theorem]{Remark}

\newcommand{\RNum}[1]{\uppercase\expandafter{\romannumeral #1\relax}}

\begin{document}

\title[Classification of unitary modules]{Classification of\\[.1cm] 
irreducible unitary modules over $\mathfrak{u}(p,q|n)$}
\author{Mark D.~Gould, Artem Pulemotov, J\o rgen Rasmussen, Yang Zhang}

\address{School of Mathematics and Physics, The University of Queensland, St Lucia, QLD 4072, Australia}

\email{m.gould1\;\!@\;\!uq.edu.au}
\email{a.pulemotov\;\!@\;\!uq.edu.au}
\email{j.rasmussen\;\!@\;\!uq.edu.au}
\email{yang.zhang\;\!@\;\!uq.edu.au}

\begin{abstract}
We classify all irreducible highest-weight unitary modules over the non-compact real form $\mathfrak{u}(p,q|n)$ of the general linear Lie superalgebra $\mathfrak{gl}_{p+q|n}$. The classification is given by explicit necessary and sufficient conditions on the highest weights, and our approach combines the Howe duality for $\mathfrak{gl}_{p+q|n}$ with a quadratic invariant of the maximal compact subalgebra. Using this classification result, we also classify all irreducible lowest-weight unitary modules over $\mathfrak{u}(p,q|n)$ via duality, and all irreducible unitary modules over $\mathfrak{u}(n|q,p)$ via an isomorphism of Lie superalgebras. 
\end{abstract}

\maketitle

\tableofcontents

\section{Introduction}

The theory of Lie superalgebras \cite{Kac77a} and their representations plays a fundamental role in the understanding and exploitation of 
supersymmetry in physical systems. The notion of supersymmetry first arose in elementary particle physics and quantum field theory
but has since found applications in a variety of areas, including nuclear physics, integrable models, 
and string theory \cite{Jun96,DF04,Din07}. 

The representation theory of the simple basic classical Lie superalgebras was first investigated by Kac \cite{Kac77b},
who introduced the now familiar dichotomy between typical and atypical finite-dimensional irreducible representations. 
In this paper, we are concerned with the \textit{unitary} representations of the basic classical Lie superalgebra $\mathfrak{u}(p,q|n)$,
and as the corresponding modules admit contravariant positive-definite Hermitian forms, they are amenable to physical applications
where unitarity is a basic requirement.

Unitary representations of Lie superalgebras are natural generalisations of those appearing in the theory of ordinary Lie algebras,
and arise naturally from so-called star-operations. 
Such operations were first introduced in the super setting by Scheunert, Nahm and Rittenberg \cite{SNR77}, 
who showed that a basic classical Lie superalgebra admits at most two types of finite-dimensional unitary irreducible representations. 
These were subsequently classified by Gould and Zhang \cite{GZ90b}. 
Crucially, such unitary representations arise from star-operations corresponding to a compact real form of the underlying 
even subalgebra, and only exist for the type-I basic classical Lie superalgebras $\mathfrak{osp}_{2|2n}$ and~$\mathfrak{gl}_{m|n}$.

Apart from any physical applications, irreducible representations of Lie superalgebras are mathematically interesting in their own right. 
In particular, the category of finite-dimensional unitary modules (of a given type) is closed under tensor products, 
so the tensor product of two such modules is completely reducible. 
Moreover, there are two distinct types of unitary representations, and they are related by duality \cite{GZ90b}. 
The first type includes the so-called covariant tensor representations, while the second includes the contravariant tensor 
representations. This explains the applicability of Young diagram methods to this class of modules.
In fact, there exists a much larger class of non-tensorial typical unitary modules. 
Indeed, corresponding to every irreducible tensor module, there is a one-parameter family of typical unitary modules which are non-tensorial. 
It is particularly interesting that such representations underlie integrable electron models (and corresponding link polynomials),  
where the parameter labelling the modules has physical significance~\cite{GHLZ96}.

Unlike the finite-dimensional case, \textit{infinite-dimensional} unitary highest-weight representations exist for all basic 
classical Lie superalgebras. 
Here, we provide a classification of such $\mathfrak{u}(p,q|n)$-representations that arise from a star-operation for 
which $\mathfrak{u}(p,q)\oplus \mathfrak{u}(n)$ is the 
real form of the even complex subalgebra $\mathfrak{gl}_{p+q}\oplus \mathfrak{gl}_{n}$ of $\mathfrak{gl}_{p+q|n}$.

As noted above, the compact case ($q=0$) was treated in \cite{GZ90b}. In the non-compact case ($p,q\neq 0$), 
positive-energy unitary irreducible representations of $\mathfrak{su}(p,q|n)$ have been studied for small $n$ and for $p=q=2$, 
motivated by superconformal field theory; see \cite{GZ90a,GV19} and references therein. Furutsu and Nishiyama \cite{FN91} 
subsequently classified the highest-weight unitary irreducible $\mathfrak{su}(p,q|n)$-modules with integer highest weights. 
By design, this classification does not cover the large class of unitary modules with non-integer highest weights.
More general and systematic classification results for highest-weight unitary irreducible $\mathfrak{su}(p,q|n)$-modules were later 
proposed by Jakobsen \cite{Jak94} and, more recently, by G\"unaydin and Volin \cite{GV19}. 
As noted in \cite{GV19}, Jakobsen's classification does not include all positive-energy unitary representations of~$\mathfrak{su}(2,2|n)$.

In the present paper, we extend the approach of \cite{GZ90b} and classify both highest-weight and lowest-weight 
unitary $\mathfrak{u}(p,q|n)$-modules. 
The latter are related to the former by duality, a fact that, to the best of our knowledge, has not been explicitly noted in the literature. 
We give detailed proofs of both necessity and sufficiency of the classification, based on an induced module construction and 
an application of Howe duality to the super setting \cite{CW01,CLZ04}.

A key feature of our approach is that the classification is formulated directly with respect to the \emph{standard} Borel subalgebra,
enhancing the applicability in both physics and mathematics.
This also distinguishes our work from that of G\"unaydin and Volin \cite{GV19}, who obtained unitarity conditions  using Young diagrams and 
oscillator techniques for a \emph{non-standard} Borel subalgebra. Their classification was recently recovered by Schmidt \cite{Sch26}
using an algebraic Dirac operator and corresponding Dirac inequalities, but still formulated in the non-standard setting.\footnote{The authors only became aware of Schmidt's work just prior to submission of the present paper; all results reported here were obtained before the preprint \cite{Sch26} appeared.}
In addition, since $\mathfrak{u}(p,q|n)$ has a nontrivial centre, its unitary modules involve a twist parameter 
not present at the level of $\mathfrak{su}(p,q|n)$.

The paper is set up as follows. 
\secref{Sec: Star} specifies our notation and conventions and presents some general results in the area. 
An in-depth discussion of star-operations on $\mathfrak{gl}_{m|n}$ and the corresponding unitary modules is given in \secref{sec: glmn}.
\thmref{thm: main} in \secref{sec: class} presents our main result, with
Sections \ref{sec: inv}--\ref{sec: suff} devoted to the proof of this classification. 
Necessary conditions for unitarity are thus derived in \secref{sec: nec}, while a new criterion for unitarity is derived in \secref{sec: inv}. 
\secref{sec: Howe} outlines the relevant Howe duality in our setting.
Following a discussion of unitary modules with integral highest weights in \secref{sec: Integral},
the proof of sufficiency of the conditions derived in \secref{sec: nec} is given in \secref{sec: suff}.
This then completes the proof of the classification result in \thmref{thm: main}. In \secref{sec: lowest}, 
using \thmref{thm: main}, we classify the unitary lowest-weight $\mathfrak{u}(p,q|n)$-modules 
and unitary $\mathfrak{u}(n|q,p)$-modules. 
\medskip

\noindent\textbf{Convention.} We denote by $\mathbb{C}$ (respectively $\mathbb{R}$) the field of complex (respectively real) numbers, 
by $\mathbb{Z}_+$ (respectively $\mathbb{Z}_-$) the set of non-negative (respectively non-positive) integers, and by $\mathbb{Z}_2$ 
the ring of integers modulo 2.  We write $\otimes$ for the tensor product over $\mathbb{C}$, denote the imaginary unit 
by $\mathrm{i}\mkern1mu$, and  for $x\in \mathbb{R}$, write $\lfloor{x}\rfloor$ for the greatest integer less than or equal to $x$.
For any Lie superalgebra $\mathfrak{l}$, we denote by ${\rm U}(\mathfrak{l})$ the corresponding universal enveloping algebra.
When characterising modules and representations, we use the terms ``simple" (and ``semisimple") and ``irreducible" 
(and ``completely reducible") interchangeably.
Throughout, we work over $\mathbb{C}$, unless otherwise stated. 
All homomorphisms between super vector spaces are assumed to be even. 
Given a super vector space $V=V_{\bar{0}}\oplus V_{\bar{1}}$, 
the parity of homogeneous $v\in V$ is given by $[v]=\bar{0}$ (respectively $\bar{1}$) if $v\in V_{\bar{0}}$ (respectively~$V_{\bar{1}}$). 
If a super vector space $V$ is finite-dimensional, we let $V^*:=\mathrm{Hom}(V,\mathbb{C})$ denote the usual dual. 
If $V$ is infinite-dimensional, we assume that $V=\bigoplus_{r\in\mathbb{Z}} V_r$ is $\mathbb{Z}$-graded,
with each $V_r$ finite-dimensional, and by abuse of notation, we write $V^*$ for the graded dual space defined
as $V^*:=\bigoplus_{r\in\mathbb{Z}} V_r^*$.

\section{Star-operations and unitarity}
\label{Sec: Star}

\subsection{Basics and duals}

Largely following \cite{SNR77,GZ90b,CLZ04}, here we recall some basic facts about star-superalgebras and their unitary modules,
and how these carry over to Lie superalgebras. We also consider dual star-operations and the corresponding unitary modules.

First, a \textit{star-superalgebra} over $\mathbb{C}$ is an associative superalgebra $A=A_{\bar{0}}\oplus A_{\bar{1}}$ equipped 
with an even anti-linear anti-involution $\phi: A\to A$; i.e., 
$$
 \phi(ca)=\bar{c}a, \qquad
 \phi(ab)= \phi(b)\phi(a), \qquad 
 c\in \mathbb{C},\quad a,b\in A, 
$$
where $\bar{c}$ denotes the complex conjugate of $c$. A star-superalgebra homomorphism $f: (A, \phi)\to (A', \phi')$ is a 
superalgebra homomorphism satisfying $f\circ \phi= \phi'\circ f$.

Second, let $(A,\phi)$ be a star-superalgebra and $V$ a $\mathbb{Z}_2$-graded $A$-module. 
A Hermitian form $\langle -,- \rangle$ on $V$ is said to be \textit{positive-definite} if $\langle v, v \rangle > 0$ for all nonzero $v\in V$, 
and \textit{contravariant} if $\langle av, w \rangle = \langle v, \phi(a)w \rangle$ for all $a \in A$ and all $v, w \in V$. An $A$-module 
equipped with a positive-definite contravariant Hermitian form is called a \textit{unitary} $A$-module.

Third, a \textit{star-operation} on a complex Lie superalgebra $\mathfrak{g}$ is an even anti-linear map $\biggstar$ such that 
$$
 [X,Y]^{\medstar}= [Y^{\medstar},X^{\medstar}], \qquad 
 (X^{\medstar})^{\medstar}=X, \qquad 
 X,Y\in \mathfrak{g}.
$$
As this star-operation extends to an even anti-linear anti-involution on ${\rm U}(\mathfrak{g})$, 
it equips ${\rm U}(\mathfrak{g})$ with the structure of a star-superalgebra.
The star-induced notion of unitarity for star-superalgebras can thus be applied to ${\rm U}(\mathfrak{g})$-modules,
and hence to $\mathfrak{g}$-modules.

Fourth, a real Lie superalgebra $\mathfrak{u}$ is said to be a \textit{real form} of a complex Lie superalgebra $\mathfrak{g}$ 
if $\mathfrak{g}\cong \mathfrak{u}\otimes_{\,\mathbb{R}}\!\mathbb{C}$. A real form $\mathfrak{u}$ is called \textit{compact} 
if $\mathfrak{u}_{\bar{0}}$ is a compact Lie algebra; otherwise, it is called \textit{non-compact}. 
A real form can be induced from a star-operation on $\mathfrak{g}$, setting
$$
 \mathfrak{u}_{\bar{0}}={\rm span}_{\mathbb{R}}\{X\in \mathfrak{g}_{\bar{0}}\,|\, X^{\medstar}=-X\},\qquad
 \mathfrak{u}_{\bar{1}}={\rm span}_{\mathbb{R}}\{X\in \mathfrak{g}_{\bar{1}}\,|\, X^{\medstar}=\mathrm{i}\mkern1mu X\}.
$$

We now define the \textit{dual star-operation} $\dualstar$  by 
$$
 X^{\dualstar}:=(-1)^{[X]}X^{\medstar}, \qquad X\in \mathfrak{g}_{\bar{0}}\cup \mathfrak{g}_{\bar{1}},
$$
extended linearly to all of $\mathfrak{g}$.
As stated in \propref{prop: dualstar} below, the $\mathfrak{g}$-modules which are unitary with respect to $\biggstar$ 
are related by duality to those which are unitary with respect to $\dualstar$.

Indeed, let $V$ be a $\biggstar$-unitary $\mathfrak g$-module and recall our convention that, if $V$ is infinite-dimensional, 
then $V$ is assumed $\mathbb{Z}$-graded with finite-dimensional graded components, while $V^*$ denotes the graded dual. 
Let $\{v_i\}_{i\in I}$ be a homogeneous orthonormal $V$-basis with respect to the positive-definite Hermitian form $\langle -, -\rangle$,
and let $\{v_i^*\}_{i\in I}$ denote the dual basis of $V^*$, so that $v_i^*(v_j)=\delta_{ij}$ for all $i,j\in I$.
The Hermitian form on $V$ induces a positive-definite Hermitian form on $V^*$ such that $\{v_i^*\}_{i\in I}$ is an orthonormal basis. 
In particular, for any homogeneous $X\in\mathfrak g$ and all $i,j\in I$, we have
$$
 v_i^*(Xv_j)=\langle v_i,Xv_j\rangle.
$$
Moreover, the dual $\mathfrak g$-module structure on $V^*$ is given by
$$
 (Xv_i^*)(v_j)=-(-1)^{[X][v_i]}v_i^*(Xv_j).
$$
Since every $f\in V^*$ can be written as $f=\sum_{i\in I} f(v_i)\,v_i^*$, it follows that
\begin{align*}
 \langle v_i^*,Xv_j^*\rangle
 &=\sum_{k\in I}\,\langle v_i^*,v_k^*\rangle\,(Xv_j^*)(v_k)
 =-(-1)^{[X][v_j]}\sum_{k\in I}\,\langle v_i^*,v_k^*\rangle\, v_j^*(Xv_k)
 \\[.1cm]
 &=-(-1)^{[X][v_j]}\,v_j^*(Xv_i)
 =-(-1)^{[X][v_j]}\,\langle v_j,Xv_i\rangle.
\end{align*}
Using this together with the unitarity of $V$ and the conjugation symmetry of the Hermitian form, we obtain
\begin{align*}
 \langle v_i^{\ast}, X^{\dualstar} v_j^{\ast}\rangle
 &= \langle v_i^{\ast}, (-1)^{[X]}X^{\medstar} v_j^{\ast}\rangle
 =-(-1)^{[X]}(-1)^{[X][v_j]}\,\langle v_j, X^{\medstar}v_i\rangle 
 \\[.1cm]
 &=-(-1)^{[X]}(-1)^{[X][v_j]}\,\langle X v_j, v_i\rangle
 = -(-1)^{[X]}(-1)^{[X][v_j]}\,\overline{\langle v_i, X v_j\rangle}
 \\[.1cm] 
 &= -(-1)^{[X][v_i]}\,\overline{\langle v_i, X v_j\rangle}
 =  \overline{\langle v_j^{\ast}, X v_i^{\ast}\rangle}
 = \langle  X v_i^{\ast}, v_j^{\ast}\rangle.
\end{align*}
Note that this is independent of the choice of homogeneous orthonormal basis of $V$.
In conclusion, we have the following result.
\begin{proposition}\label{prop: dualstar}
If $V$ is a unitary $\mathfrak{g}$-module with respect to a star-operation, then the dual module $V^{\ast}$ is unitary 
with respect to the dual star-operation.
\end{proposition}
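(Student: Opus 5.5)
The plan is to equip $V^*$ with the Hermitian form for which the dual basis $\{v_i^*\}_{i\in I}$ of a homogeneous orthonormal basis $\{v_i\}_{i\in I}$ of $V$ is orthonormal. We then check that this form is positive-definite and contravariant for $\dualstar$ on ${\rm U}(\mathfrak{g})$.

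\emph{Step 1 (setup).} First fix a homogeneous orthonormal basis $\{v_i\}_{i\in I}$. In the infinite-dimensional case, choose it compatible with the $\mathbb{Z}$-grading, so that each $v_i$ lies in some $V_r$; then the $v_i^*$ span the graded dual. We also need a grading-compatible orthonormal basis to exist. If the graded components are not mutually orthogonal, we orthonormalise via Gram--Schmidt within the finite-dimensional filtered pieces. Alternatively, and more simply, we note that for weight modules distinct weight spaces are orthogonal under a contravariant form whenever the Cartan subalgebra is $\biggstar$-stable with real weights. We then define $\langle v_i^*,v_j^*\rangle=\delta_{ij}$ and extend sesquilinearly. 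Positive-definiteness is immediate, since any $f\in V^*$ is a finite combination $\sum_i c_i v_i^*$ and $\langle f,f\rangle=\sum_i|c_i|^2$.

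\emph{Step 2 (contravariance on generators).} For homogeneous $X\in\mathfrak{g}$ we verify $\langle Xv_i^*,v_j^*\rangle=\langle v_i^*,X^{\dualstar}v_j^*\rangle$. This is exactly the computation displayed before the statement. It expands $Xv_j^*$ in the dual basis using $(Xv_j^*)(v_k)=-(-1)^{[X][v_j]}v_j^*(Xv_k)$, and uses $\biggstar$-contravariance of the form on $V$ together with conjugate symmetry. The only delicate point is the sign bookkeeping. Here one uses that $\langle v_j,X^{\medstar}v_i\rangle\neq0$ forces $[v_j]=[X]+[v_i]$, which is what converts $(-1)^{[X]}(-1)^{[X][v_j]}$ into $(-1)^{[X][v_i]}$ (as $[X]^2=[X]$). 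By sesquilinearity, the identity then holds for all $f,g\in V^*$ and extends linearly to inhomogeneous $X$.

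\emph{Step 3 (extension to ${\rm U}(\mathfrak{g})$).} We check that $\dualstar$ is itself a star-operation. It is antilinear and involutive, and for homogeneous $X,Y$ we have $[X,Y]^{\dualstar}=(-1)^{[X]+[Y]}[Y^{\medstar},X^{\medstar}]=[Y^{\dualstar},X^{\dualstar}]$. Hence it extends to an anti-involution of ${\rm U}(\mathfrak{g})$. Contravariance for products then follows by induction on word length, since $\langle XYf,g\rangle=\langle Yf,X^{\dualstar}g\rangle=\langle f,Y^{\dualstar}X^{\dualstar}g\rangle=\langle f,(XY)^{\dualstar}g\rangle$.

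The main obstacle is really Step 1 in the infinite-dimensional case. We must make sure the orthonormal basis is compatible with the grading, so that $V^*$ is spanned by the $v_i^*$ and the sums in Step 2 are finite. I would handle this via orthogonality of weight spaces and their finite-dimensionality.
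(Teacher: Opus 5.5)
Your proposal is correct and follows the paper's proof. The paper also gives $V^*$ the form for which the dual basis $\{v_i^*\}$ of a homogeneous orthonormal basis is orthonormal, and runs exactly the sign computation of your Step~2; your Steps~1 and~3 (a grading-compatible basis, and the check that $\dualstar$ is a star-operation so contravariance passes from generators to ${\rm U}(\mathfrak{g})$) spell out points the paper leaves implicit. Drop the Gram--Schmidt-along-a-filtration fallback in Step~1: the resulting functionals $v_i^*=\langle v_i,-\rangle$ can be nonzero on infinitely many $V_r$, so they need not lie in the graded dual; the route you commit to, where graded components are mutually orthogonal because they are sums of weight spaces, is the one that works.
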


\subsection{Compatibility with the super Killing form}

Here, we note a compatibility between star-operations and the super Killing form on $\mathfrak{g}$. 
Although this will not be used in the remainder of the paper, it may be of independent interest. 

To set the stage, recall that the \textit{super Killing form} on $\mathfrak{g}$ is defined by
$$
 (X,Y):= \operatorname{Str}(\operatorname{ad}X\circ\operatorname{ad}Y), \qquad 
 X,Y\in \mathfrak{g}, 
$$
where $\operatorname{Str}$ denotes the supertrace and $\operatorname{ad}$ the adjoint representation of $\mathfrak{g}$.
We now let $\{x_a\}$ denote a homogeneous basis for $\mathfrak{g}$ and write
$$
 [x_a,x_b]=\sum_{c}\Gamma_{ab}^{c}\,x_c,
$$
where by super skew-symmetry, the \textit{structure constants} $\Gamma_{ab}^{c}$ satisfy
$$
 \Gamma_{ab}^{c}=-(-1)^{[a][b]}\Gamma_{ba}^{c}.
$$
Here, $[a]:=[x_a]$, and we note that $[a]+[b]=[c]$ for any index $c$ in $\Gamma^{c}_{ab}$.
It follows that
$$
 [x_a,[x_b,x_c]]=\sum_{d}\Gamma_{bc}^{d}[x_a,x_d]=\sum_{d,s}\Gamma_{ad}^{s}\Gamma_{bc}^{d}x_s,
$$
hence
$$
 (x_a,x_b)=\operatorname{Str}(\operatorname{ad}x_a\circ\operatorname{ad}x_b)=\sum_{c,d} (-1)^{[c]}\Gamma_{ad}^{c}\Gamma_{bc}^{d}.
$$
\begin{proposition}
Let $\biggstar$ be a star-operation on $\mathfrak{g}$. Then, for all $X,Y\in \mathfrak{g}_{\bar{0}}\cup \mathfrak{g}_{\bar{1}}$,
$$
 (X^{\medstar},Y^{\medstar})= (-1)^{[X]}\,\overline{(X,Y)}. 
$$
\end{proposition}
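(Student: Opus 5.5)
The plan is to compute the super Killing form in a second homogeneous basis, namely the image of $\{x_a\}$ under the star-operation, and compare structure constants. Because $\biggstar$ is even, anti-linear and involutive, the elements $y_a:=x_a^{\medstar}$ again form a homogeneous basis of $\mathfrak{g}$, with $[y_a]=[a]$. The formula
$$
 (x_a,x_b)=\sum_{c,d}(-1)^{[c]}\Gamma^c_{ad}\Gamma^d_{bc}
$$
derived above holds for \emph{any} homogeneous basis, because the supertrace of $\operatorname{ad}x_a\circ\operatorname{ad}x_b$ does not depend on the basis used to compute it. So I can evaluate $(y_a,y_b)$ with the structure constants of the basis $\{y_a\}$.

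The first step is to find those structure constants. Using $[X,Y]^{\medstar}=[Y^{\medstar},X^{\medstar}]$ and anti-linearity,
$$
 [y_a,y_b]=[x_b,x_a]^{\medstar}=\sum_c\overline{\Gamma^c_{ba}}\,y_c .
$$
Hence the new constants are $\Gamma'^{\,c}_{ab}=\overline{\Gamma^c_{ba}}=-(-1)^{[a][b]}\,\overline{\Gamma^c_{ab}}$, by super skew-symmetry. Substituting into the Killing-form formula gives
$$
 (y_a,y_b)=\sum_{c,d}(-1)^{[c]}(-1)^{[a][d]+[b][c]}\,\overline{\Gamma^c_{ad}\Gamma^d_{bc}} .
$$

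The second step, which I expect to be the only delicate point, is the sign bookkeeping. A term is nonzero only if $[c]=[a]+[d]$ and $[d]=[b]+[c]$, which together force $[a]=[b]$. If $[a]\neq[b]$, both $(y_a,y_b)$ and $(x_a,x_b)$ vanish, since the Killing form is even, and the claim holds trivially. If $[a]=[b]$, then on every contributing term
$$
 [a][d]+[b][c]=[a]\bigl([c]+[d]\bigr)=[a][a]=[a].
$$
So the extra sign is the constant $(-1)^{[a]}$ and can be pulled out of the sum, giving $(y_a,y_b)=(-1)^{[a]}\,\overline{(x_a,x_b)}$.

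The final step extends this to arbitrary homogeneous $X,Y$. Write $X=\sum_a\alpha_a x_a$ and $Y=\sum_b\beta_b x_b$, with sums over basis elements of parities $[X]$ and $[Y]$ respectively. Then $X^{\medstar}=\sum_a\bar\alpha_a y_a$ and $Y^{\medstar}=\sum_b\bar\beta_b y_b$. Bilinearity of the Killing form, together with the basis identity and the fact that $[a]=[X]$ for every term in the sum, yields
$$
 (X^{\medstar},Y^{\medstar})=(-1)^{[X]}\,\overline{(X,Y)} .
$$
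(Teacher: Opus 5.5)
Your proposal is correct and follows the paper's proof essentially step for step. Like the paper, you compute the supertrace in the starred basis $\{x_a^{\medstar}\}$, whose structure constants are $\overline{\Gamma^c_{ba}}$, and then use super skew-symmetry to extract the factor $(-1)^{[a]}$. Your parity bookkeeping (nonzero terms force $[a]=[b]$ and give $[a][d]+[b][c]=[a]$) and your explicit anti-linear extension to homogeneous $X,Y$ fill in steps that the paper leaves implicit.
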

\begin{proof}
Using the basis and structure-constant notation from above, we have
$$
 [x_a,x_b]^{\medstar}
 =[x_b^{\medstar},x_a^{\medstar}],\qquad
 [x_a,x_b]^{\medstar}=\sum_{c}(\Gamma_{ab}^{c}x_c)^{\medstar},
$$
so
$$
 [x_a^{\medstar},x_b^{\medstar}]=\sum_c(\Gamma_{ba}^{c}x_c)^{\medstar}=\sum_c\overline{\Gamma_{ba}^{c}}\,x_c^{\medstar},
$$
hence
$$
 [x_a^{\medstar},[x_b^{\medstar},x_c^{\medstar}]]
 =\sum_{d}\overline{\Gamma_{cb}^{d}}[x_a^{\medstar},x_d^{\medstar}]
 =\sum_{d,s}\overline{\Gamma_{cb}^{d}}\,\overline{\Gamma_{da}^{s}}x_s^{\medstar}.
$$
It follows that 
\begin{align*}
 (x_a^{\medstar},x_b^{\medstar})
 &=\operatorname{Str}(\operatorname{ad}x_a^{\medstar}\circ \operatorname{ad}x_b^{\medstar})
 =\sum_{c,d} (-1)^{[c]}\,\overline{\Gamma_{cb}^{d}}\,\overline{\Gamma_{da}^{c}}
 =(-1)^{[a]}\sum_{c,d}(-1)^{[c]}\,\overline{\Gamma_{ad}^{c}}\,\overline{\Gamma_{bc}^{d}} \\[.1cm]
 &=(-1)^{[a]}\,\overline{(x_a,x_b)}.
\end{align*}
Extending by linearity, we obtain the desired result. 
\end{proof}

\section{The general linear Lie superalgebra}
\label{sec: glmn}

Here, we discuss the general linear Lie superalgebra $\mathfrak{gl}_{p+q|n}$, its real form $\mathfrak{u}(p,q|n)$, 
and its unitary modules.

\subsection{Algebraic structure}

Let $\{e_1,\ldots,e_{m+n}\}$ denote the standard (ordered) basis for the complex superspace $\mathbb{C}^{m|n}$
of dimension $m|n$. That is, $\{e_1,\ldots,e_m\}$
is an ordered basis for the even subspace $(\mathbb{C}^{m|n})_{\bar{0}}=\mathbb{C}^m$, while $\{e_{m+1},\ldots,e_{m+n}\}$
is an ordered basis for the odd subspace $(\mathbb{C}^{m|n})_{\bar{1}}=\mathbb{C}^n$.
Relative to the standard basis, the generators of the \textit{general linear Lie superalgebra} 
$\mathfrak{gl}_{m|n}:=\mathfrak{gl}(\mathbb{C}^{m|n})$ are $(m+n)\times (m+n)$ matrices:
$$
 \mathfrak{gl}_{m|n}
 = \Big\{\!\begin{pmatrix}
 A & B\\ 
 C & D
 \end{pmatrix} \Big|\, A\in \mathfrak{M}_{m,m},\, B\in \mathfrak{M}_{m, n},\, C\in \mathfrak{M}_{n,m},\, D\in \mathfrak{M}_{n, n} \Big\}, 
$$
where $\mathfrak{M}_{r,s}$, $r,s\in\mathbb{Z}_+$, denotes the complex space of $(r\times s)$-matrices. 
The even subalgebra $(\mathfrak{gl}_{m|n})_{\bar{0}}=\mathfrak{gl}_m \oplus \mathfrak{gl}_n$ consists of the matrices for which 
$B=0$ and $C=0$, while the odd subspace $(\mathfrak{gl}_{m|n})_{\bar{1}}$ consists of the matrices for which $A=0$ and $D=0$. 
The Lie bracket is defined for homogeneous $X,Y\in \mathfrak{gl}_{m|n}$ by $[X,Y]= XY-(-1)^{[X][Y]}YX$, extended bilinearly to all 
of $\mathfrak{gl}_{m|n}$. 

For each pair $a,b\in \{1, \dots, m+n\}$, let $E_{ab}$ denote the matrix unit of $\mathfrak{gl}_{m|n}$ such that, 
for all $c\in \{1, \dots, m+n\}$, $E_{ab}e_c= \delta_{b,c}e_a$, where $\delta_{b,c}$ is the Kronecker symbol. 
The set $\{E_{ab}\,|\,a,b=1,\ldots,m+n\}$ is then a basis for $\mathfrak{gl}_{m|n}$, and
$$
 [E_{ab},E_{cd}]=\delta_{b,c}E_{ad}-(-1)^{([a]+[b])([c]+[d])}\delta_{d,a}E_{cb}.
$$
A basis for the \textit{Cartan subalgebra} $\mathfrak{h}_{m|n}$ of $\mathfrak{gl}_{m|n}$ is $\{E_{aa}\,|\,a=1,\ldots,m+n \}$, 
while $\{E_{ab}\,|\, 1\leq a\leq b\leq m+n \}$ is a basis for the corresponding \textit{standard Borel subalgebra} $\mathfrak{b}_{m|n}$.

Let $\{\epsilon_a \,|\,a=1,\ldots,m+n\}$ be a basis for the dual space 
$\mathfrak{h}_{m|n}^{\ast}={\rm Hom}_{\mathbb{C}}(\mathfrak{h}_{m|n}, \mathbb{C})$ 
such that $\epsilon_a(E_{bb}) = \delta_{a,b}$ for all $a,b$. 
The set of positive roots relative to $\mathfrak{b}_{m|n}$ is
$$
 \Phi^+ = \{\epsilon_a - \epsilon_b \,|\, 1 \leq a < b \leq m+n\}.
$$
Writing $\delta_{\mu}= \epsilon_{m+\mu}$ for $\mu=1,\ldots,n$, the sets of even respectively odd positive roots are given by
\begin{align*}
 \Phi^+_{\bar{0}} &= \{\epsilon_i - \epsilon_j \,|\, 1 \leq i < j \leq m\} \cup \{\delta_{\mu} - \delta_{\nu} \,|\, 1 \leq \mu < \nu \leq n\},
 \\[.1cm]
 \Phi^+_{\bar{1}} &= \{\epsilon_i - \delta_{\mu} \,|\, 1 \leq i \leq m, 1\leq \mu\leq n\},
\end{align*}
while the simple root system is given by
$$
 \Delta= \{\epsilon_i-\epsilon_{i+1}, \epsilon_m-\delta_1, \delta_{\mu}-\delta_{\mu+1}\,|\, 1\leq i\leq m-1, 1\leq \mu\leq n-1 \}.
$$
For each $\alpha\in\Phi^+$, the corresponding root space is the 1-dimensional vector space spanned by $E_{\alpha}=E_{ab}$,
where $\alpha=\epsilon_a-\epsilon_b$.

Let $(-,-): \mathfrak{h}^\ast_{m|n}\times \mathfrak{h}^\ast_{m|n}\to\mathbb{C}$ denote the symmetric bilinear form defined by
$$
 (\epsilon_i, \epsilon_j)=\delta_{i,j}, \qquad 
 (\epsilon_i, \delta_{\mu})=0,\qquad
 (\delta_{\mu}, \delta_{\nu})=-\delta_{\mu,\nu},
$$
for $i,j\in\{1,\ldots,m\}$ and $\mu,\nu\in\{1,\ldots,n\}$. The graded half-sum of positive roots,
$$
 \rho:=\frac{1}{2}\Big(\sum_{\alpha\in\Phi^+_{\bar0}}\!\alpha-\sum_{\alpha\in\Phi^+_{\bar1}}\!\alpha\Big)
 =\frac{1}{2} \sum_{i=1}^{m} (m-n-2i+1) \epsilon_i + \frac{1}{2} \sum_{\mu=1}^{n} (m+n-2\mu+1) \delta_{\mu},
$$
satisfies
\begin{align}\label{eq: rhoform}
 (\rho, \epsilon_i-\epsilon_j)=j-i,\qquad
 (\rho, \epsilon_i-\delta_{\mu})= m+1-i-\mu,\qquad
 (\rho, \delta_{\mu}-\delta_{\nu})=\mu-\nu.
\end{align}

\subsection{Real form \texorpdfstring{$\mathfrak{u}(p,q|n)$ of $\mathfrak{gl}_{p+q|n}$}{u(p,q|n) of gl(p+q|n)}}
\label{sec: realform}

Let $p,q$ be positive integers such that $m=p+q$. 
On $\mathfrak{gl}_{p+q|n}$, we define the $p,q$-dependent star-operation
\begin{align}\label{eq: star}
 (E_{ab})^{\medstar}:= 
 \begin{cases}
 E_{ba}, & a,b\leq p\ \ \text{or}\ \ a,b>p,\\[.1cm]
 -E_{ba}, & \text{otherwise},
 \end{cases}
\end{align}
and we denote by $\mathfrak{u}(p,q|n)$ the real form of $\mathfrak{gl}_{p+q|n}$ induced by this star-operation.
By construction, it is a real Lie superalgebra, and as its even subalgebra is $\mathfrak{u}(p,q)\oplus \mathfrak{u}(n)$, it is non-compact.
Following the discussion in \secref{Sec: Star}, a $\mathfrak{gl}_{p+q|n}$-module is unitary with respect to \eqnref{eq: star} 
if it is unitary as a module over the corresponding star-superalgebra ${\rm U}(\mathfrak{gl}_{p+q|n})$;
that is, if it carries a positive-definite Hermitian form $\langle-,-\rangle$ satisfying
\begin{align}\label{eq: Hermitian}
 \langle E_{ab}v,\,w\rangle \;=\; \langle v,\,(E_{ab})^{\medstar}w\rangle, \qquad 
 v,w\in V, \quad a,b\in\{1,\ldots,p+q+n\}.
\end{align}

Key to our work, $\mathfrak{u}(p,q|n)$ is exactly the real form of $\mathfrak{gl}_{p+q|n}$ associated with the star-operation
appearing in \eqref{eq: Hermitian}. It follows that
(i) every unitary representation of $\mathfrak{u}(p,q|n)$ extends by complexification to a $\mathfrak{gl}_{p+q|n}$-module 
satisfying \eqref{eq: Hermitian}, 
and conversely, that 
(ii) restricting the action of a unitary $\mathfrak{gl}_{p+q|n}$-module to 
its real part yields a unitary $\mathfrak{u}(p,q|n)$-module. 
Utilizing this, we shall study unitary $\mathfrak{u}(p,q|n)$-modules via the corresponding $\mathfrak{gl}_{p+q|n}$-modules. 

Relative to the real form $\mathfrak{u}(p,q|n)$, 
we define the \textit{non-compact positive root system} of $\mathfrak{gl}_{p+q|n}$ by 
$\Phi^{+}_{\mathrm{nc}}:=\Phi^{+}_{\mathrm{nc},\bar{0}}\cup\Phi^{+}_{\mathrm{nc},\bar{1}}$, where 
$$
 \Phi^{+}_{\mathrm{nc},\bar{0}}:=\{ \epsilon_i-\epsilon_j \,|\, 1\leq i\leq p<j\leq p+q\}, \qquad
 \Phi^{+}_{\mathrm{nc},\bar{1}}:=\{ \epsilon_i-\delta_{\mu} \,|\, 1\leq i\leq p, 1\leq \mu \leq n\}. 
$$
Associated with this, we have the triangular decomposition
\begin{align}
 \mathfrak{gl}_{p+q|n}= \mathfrak{k}_{-}\oplus \mathfrak{k} \oplus \mathfrak{k}_+,
\label{glk}
\end{align}
where 
$$
 \mathfrak{k}_-:={\rm span}\{E_\alpha\,|\,\alpha\in-\Phi^+_{\mathrm{nc}}\},\qquad
 \mathfrak{k}\cong \mathfrak{gl}_{p}\oplus \mathfrak{gl}_{q|n},\qquad
 \mathfrak{k}_+:={\rm span}\{E_\alpha\,|\,\alpha\in\Phi^+_{\mathrm{nc}}\}.
$$
We also define the corresponding \textit{compact positive root system} by 
$\Phi^{+}_{\mathrm{c}}:=\Phi^{+}_{\mathrm{c}, \bar{0}}\cup\Phi^{+}_{\mathrm{c},\bar{1}}$, where 
$$
 \Phi^{+}_{\mathrm{c}, \bar{0}}:= \Phi_{\bar{0}}^+ - \Phi^{+}_{\mathrm{nc},\bar{0}}, \qquad 
 \Phi^{+}_{\mathrm{c},\bar{1}}:= \Phi_{\bar{1}}^+ - \Phi^{+}_{\mathrm{nc},\bar{1}},
$$
and note that $\Phi^{+}_{\mathrm{c}}$ is the positive root system of the $\mathfrak{gl}_{p+q|n}$-subalgebra 
$\mathfrak{k}$.

\subsection{Admissible \texorpdfstring{$\mathfrak{gl}_{p+q|n}$-modules}{gl(p+q|n)-modules}}

We say that a  $\mathfrak{gl}_{p+q|n}$-module is \textit{admissible} if its restriction to $\mathfrak{k}$ in \eqref{glk} decomposes into
a direct sum of finite-dimensional simple $\mathfrak{k}$-modules, each occurring with finite multiplicity. 
Every admissible  module admits a weight-space decomposition with respect to the Cartan subalgebra $\mathfrak{h}$ 
of $\mathfrak{gl}_{p+q|n}$. Throughout, we work with the category of admissible  unitary $\mathfrak{gl}_{p+q|n}$-modules, 
which is the super analogue of the framework used by Enright, Howe and Wallach in their classification of unitary highest-weight 
modules \cite{EHW83}.

The following gives necessary weight conditions for admissible unitary $\mathfrak{gl}_{p+q|n}$-modules (cf. \cite[Lemma 2.1]{FN91}). 
\begin{lemma}\label{lem: wtcon}
Let $V$ be an admissible unitary $\mathfrak{gl}_{p+q|n}$-module, and let 
$\lambda=\sum_{i=1}^{p+q}\lambda_i \epsilon_i + \sum_{\mu=1}^n \omega_{\mu}\delta_{\mu}$ be any weight of $V$. 
Then, $\lambda$ is real, and  
$$
 \lambda_i \leq -\omega_{\mu}\leq \lambda_j
$$
for all $i\in\{1,\ldots,p\}$, $j\in\{p+1,\ldots,p+q\}$, and $\mu\in\{1,\ldots,n\}$. 
\end{lemma}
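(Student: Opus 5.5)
Reality comes first. For each Cartan element $E_{aa}$ we have $(E_{aa})^{\medstar}=E_{aa}$ by \eqref{eq: star}. For a weight vector $v\neq0$ of weight $\lambda$, contravariance gives
\[
\lambda(E_{aa})\langle v,v\rangle=\langle E_{aa}v,v\rangle=\langle v,E_{aa}v\rangle=\overline{\lambda(E_{aa})}\langle v,v\rangle .
\]
Since $\langle v,v\rangle>0$, every $\lambda(E_{aa})$ is real. Admissibility guarantees that weight vectors exist and span $V$, so the statement makes sense for every weight.

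For the inequalities I will use the rank-one odd subalgebras spanned by $E_{i,p+q+\mu}$, $E_{p+q+\mu,i}$ and their anticommutator
\[
[E_{i,p+q+\mu},E_{p+q+\mu,i}]=E_{ii}+E_{p+q+\mu,p+q+\mu},
\]
which acts on a vector of weight $\lambda$ by $\lambda_i+\omega_\mu$.

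Take first $i\le p$. Then $E_{i,p+q+\mu}$ is odd and mixes the index sets $\{\le p\}$ and $\{>p\}$, so $(E_{i,p+q+\mu})^{\medstar}=-E_{p+q+\mu,i}$. Write $X=E_{i,p+q+\mu}$ and $Y=E_{p+q+\mu,i}$, and let $v$ be a weight vector of weight $\lambda$. Contravariance gives
\[
\|Xv\|^2=\langle Xv,Xv\rangle=-\langle v,YXv\rangle ,\qquad \|Yv\|^2=-\langle v,XYv\rangle .
\]
Adding the two equations yields
\[
\|Xv\|^2+\|Yv\|^2=-\langle v,(XY+YX)v\rangle=-(\lambda_i+\omega_\mu)\|v\|^2 .
\]
Here $XY+YX$ is the supercommutator of two odd elements. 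Hence $\lambda_i+\omega_\mu\le0$, that is, $\lambda_i\le-\omega_\mu$.

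Now take $j\in\{p+1,\dots,p+q\}$. The indices $j$ and $p+q+\mu$ are both $>p$, so $(E_{j,p+q+\mu})^{\medstar}=+E_{p+q+\mu,j}$. The same computation with the sign reversed gives
\[
\|Xv\|^2+\|Yv\|^2=(\lambda_j+\omega_\mu)\|v\|^2\ge 0 ,
\]
hence $-\omega_\mu\le\lambda_j$.

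I expect no real obstacle. The only points needing care are the sign conventions in the star-operation, and that the odd anticommutator involves $XY+YX$ rather than a commutator. With that sign tracked correctly, the argument needs no admissibility beyond the existence of weight vectors. In particular, it does not need the $\mathfrak{k}$-finiteness used in \cite[Lemma 2.1]{FN91} for the even inequalities $\lambda_i\le\lambda_j$, which are not asserted here anyway.
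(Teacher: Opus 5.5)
Your proof is correct and follows the paper's argument: reality comes from $(E_{aa})^{\medstar}=E_{aa}$, and the inequalities come from writing the odd anticommutator $[E_{i,m+\mu},E_{m+\mu,i}]=E_{ii}+E_{m+\mu,m+\mu}$ as $\mp$ a sum of two squared norms, with the sign set by whether the even index lies in $\{1,\ldots,p\}$ or $\{p+1,\ldots,p+q\}$. Two small remarks. The paper takes the form anti-linear in the first argument, so your reality computation has the conjugate on the other side, which changes nothing. Your closing aside is slightly off: for $n\geq 1$ the asserted chain does give $\lambda_i\le\lambda_j$ by transitivity, even though your argument indeed needs no $\mathfrak{k}$-finiteness.
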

\begin{proof}
Let $v$ be a nonzero  weight vector with weight $\lambda$. Then $E_{aa}v=\lambda(E_{aa})v$ for all $a$.  
Since $E_{aa}^{\medstar}= E_{aa}$ and the Hermitian form is anti-linear in the first argument, we have 
$$
 \overline{\lambda(E_{aa})}\langle v,v \rangle=\langle E_{aa}v,v\rangle= \langle v,E_{aa}v\rangle =\lambda(E_{aa}) \langle v,v\rangle.
$$
Since $\langle v,v \rangle>0$, we have $\lambda(E_{aa})= \overline{\lambda(E_{aa})}$ for all $a$, so $\lambda$ is a real weight.  

For $i\in\{1,\ldots,p\}$ and $\mu\in\{1,\ldots,n\}$, we have $[E_{i, m+\mu}, E_{m+\mu, i}]= E_{ii}+ E_{m+\mu, m+\mu}$ 
and $(E_{i, m+\mu})^{\medstar}= - E_{m+\mu, i}$, so
\begin{align*}
 (\lambda_i+\omega_{\mu})\langle v, v\rangle
 &= \langle [E_{i, m+\mu}, E_{m+\mu, i}] v, v\rangle
 = \langle E_{i, m+\mu} E_{m+\mu, i} v,  v\rangle + \langle E_{m+\mu, i} E_{i, m+\mu} v,  v\rangle 
 \\[.1cm]
 &= -\langle E_{m+\mu, i} v,  E_{m+\mu, i} v\rangle - \langle  E_{i, m+\mu} v,  E_{i, m+\mu} v\rangle
 \leq 0,
\end{align*}
hence $\lambda_i\leq - \omega_{\mu}$. Similarly, using $[E_{j, m+\mu}, E_{m+\mu, j}]= E_{jj}+ E_{m+\mu, m+\mu}$ 
and $(E_{j, m+\mu})^{\medstar}=E_{m+\mu, j}$, it follows that $-\omega_{\mu}\leq \lambda_j$ for all $j\in\{p+1,\ldots,m\}$ 
and all $\mu\in\{1,\ldots,n\}$. 
\end{proof}
\begin{remark}
Corresponding to $n=0$, \lemref{lem: wtcon} does not apply to the non-compact real form $\mathfrak{u}(p,q)$,
as the inequalities $\lambda_i\leq \lambda_j$ do not generally hold for $i\in\{1,\ldots,p\}$ and $j\in\{p+1,\ldots,p+q\}$.
\end{remark}
An important consequence of \lemref{lem: wtcon} is the following result (cf. \cite[Proposition 2.2]{FN91}). 
\begin{proposition}\label{prop: highest}
Let $V$ be an admissible unitary simple $\mathfrak{gl}_{p+q|n}$-module. Then, $V$ is a highest-weight module with highest weight 
$\Lambda=\sum_{i=1}^{p+q}\lambda_i \epsilon_i + \sum_{\mu=1}^n \omega_{\mu}\delta_{\mu}$ satisfying 
$$
 \lambda_{p+1}\geq \cdots \geq \lambda_m \geq -\omega_n \geq \cdots \geq -\omega_1\geq \lambda_1 \geq \cdots \geq \lambda_p.
$$
\end{proposition}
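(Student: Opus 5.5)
Use \lemref{lem: wtcon} to obtain an extremal weight, and then use simplicity to upgrade that extremal vector to a highest-weight vector.

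\emph{Step 1: find an extremal weight.} Fix a weight $\lambda$ of $V$ and set
$$
 N(\lambda):=\sum_{j=p+1}^{m}\lambda_j-\sum_{i=1}^{p}\lambda_i .
$$
By \lemref{lem: wtcon}, for every weight $\lambda$ and any fixed $\mu$,
$$
 \lambda_i\le-\omega_\mu\le\lambda_j .
$$
Hence $N(\lambda)\ge 0$.

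We use $N$ as a grading. Acting by $E_\alpha$ with $\alpha\in\Phi^+_{\mathrm{nc}}$ decreases $N$:
\begin{itemize}[leftmargin=*]
 \item $E_{ij}$ with $i\le p<j$ changes $N$ by $-2$;
 \item $E_{i,m+\mu}$ with $i\le p$ changes $N$ by $-1$.
\end{itemize}
Elements of $\mathfrak{k}$ preserve $N$. Moreover $N$ is real-valued and bounded below by $0$.

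Finite multiplicity on the $\mathfrak{k}$-isotypic pieces is not needed for the following. Let $v$ be a nonzero weight vector. The vectors $E_{\alpha_1}\cdots E_{\alpha_r}v$ with $\alpha_k\in\Phi^+_{\mathrm{nc}}$ have $N$-values $N(\lambda)-r'$ with $r'\ge r$. Since $N\ge0$ on all weights, all such products vanish once $r>N(\lambda)$. Therefore $\mathrm{U}(\mathfrak{k}_+)v$ is finite-dimensional and contains a nonzero vector $w$ annihilated by $\mathfrak{k}_+$. Choose $w$ to be a weight vector; this is possible since $\mathfrak{k}_+$ is spanned by root vectors.

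\emph{Step 2: pass to a highest-weight vector.} Consider the space $W=\{x\in V:\mathfrak{k}_+x=0\}$. It is $\mathfrak{k}$-stable because $[\mathfrak{k},\mathfrak{k}_+]\subseteq\mathfrak{k}_+$, and it is nonzero by Step 1. Admissibility says $V|_{\mathfrak{k}}$ is a direct sum of finite-dimensional simple modules, so $W$ contains a finite-dimensional simple $\mathfrak{k}$-submodule $F$.

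$F$ has a highest-weight vector $v_0$ for the Borel subalgebra of $\mathfrak{k}\cong\mathfrak{gl}_p\oplus\mathfrak{gl}_{q|n}$. Its positive roots are $\Phi^+_{\mathrm c}$. Since $\Phi^+=\Phi^+_{\mathrm c}\sqcup\Phi^+_{\mathrm{nc}}$, the vector $v_0$ is annihilated by $\mathfrak{b}_{m|n}$'s nilradical. By simplicity, $V=\mathrm{U}(\mathfrak{g})v_0=\mathrm{U}(\mathfrak{n}^-)v_0$ via PBW. So $V$ is a highest-weight module with highest weight $\Lambda$.

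\emph{Step 3: the inequalities.} The middle inequalities $\lambda_m\ge-\omega_n$ and $-\omega_1\ge\lambda_1$ come directly from \lemref{lem: wtcon} applied to $\Lambda$.

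The chains $\lambda_{p+1}\ge\cdots\ge\lambda_m$, $-\omega_n\ge\cdots\ge-\omega_1$ and $\lambda_1\ge\cdots\ge\lambda_p$ need more. Since $V$ is admissible, $\Lambda$ is the highest weight of a finite-dimensional simple $\mathfrak{gl}_p$-module, which gives $\lambda_1-\lambda_2\in\mathbb{Z}_+$ and so on. Likewise $\Lambda$ is the highest weight of a finite-dimensional simple $\mathfrak{gl}_{q|n}$-module. Restricting to the even part $\mathfrak{gl}_q\oplus\mathfrak{gl}_n$, the vector $v_0$ generates a finite-dimensional module with highest weight $\Lambda$. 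This gives $\lambda_{p+1}\ge\cdots\ge\lambda_m$ and $\omega_1\ge\cdots\ge\omega_n$, i.e.\ $-\omega_n\ge\cdots\ge-\omega_1$.

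Alternatively, unitarity gives these chains directly. For a compact even root $\alpha=\epsilon_a-\epsilon_b$, one has $E_\alpha^{\medstar}=E_{-\alpha}$, and
$$
 0\le\langle E_{-\alpha}v_0,E_{-\alpha}v_0\rangle=(\Lambda,\alpha^\vee)\langle v_0,v_0\rangle .
$$
For the $\delta$ indices the sign conventions give $\omega_\mu-\omega_\nu\ge0$ for $\mu<\nu$.

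\emph{Main obstacle.} The main difficulty is Step 1: producing a $\mathfrak{k}_+$-invariant vector without assuming a priori boundedness of weights. The nonnegative grading $N$ supplied by \lemref{lem: wtcon} resolves this. It uses $n\ge1$, consistent with the remark that the argument fails for $\mathfrak{u}(p,q)$.
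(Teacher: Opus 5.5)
The gap is in Step 1. The claim $N(\lambda)=\sum_{j>p}\lambda_j-\sum_{i\le p}\lambda_i\ge 0$ does not follow from \lemref{lem: wtcon}, and it can fail as soon as $p\neq q$. The lemma only gives the pairwise bounds $\lambda_i\le-\omega_\mu\le\lambda_j$. Summing these with your unweighted coefficients yields only $N(\lambda)\ge(q-p)(-\omega_\mu)$, whose sign is uncontrolled.

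Concretely, take $p=1$, $q=2$, $n=1$. The unitary module $\mathbb{C}_s$ with $s<0$ has $N=s<0$. More generally, tensoring a unitary $L(\Lambda)$ with $\mathbb{C}_s$ shifts $N$ by $(q-p)s$, so $N$ at the highest weight can be made arbitrarily negative. Your conclusion that every product of $r>N(\lambda)$ elements of $\mathfrak{k}_+$ kills $v$ would then, with $r=0$, kill $v$ itself.

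Your side remark that $\mathfrak{k}$ preserves $N$ is also wrong: the odd root vectors $E_{j,m+\mu}$, $E_{m+\mu,j}$ with $j>p$ shift $\sum_{j>p}\lambda_j$ by $\pm1$. You never actually use it, though.

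The fix is to weight the sums correctly. Set $N'(\lambda):=\sum_{i\le p<j}(\lambda_j-\lambda_i)=p\sum_{j>p}\lambda_j-q\sum_{i\le p}\lambda_i$.
\begin{itemize}
\item $N'$ is non-negative on every weight by \lemref{lem: wtcon}.
\item $N'$ drops by $p+q$ under each $E_{ij}$ and by $q$ under each $E_{i,m+\mu}$.
\item Hence any product of more than $N'(\lambda)/q$ elements of $\mathfrak{k}_+$ annihilates $v$, and the rest of your Step 1 goes through.
\end{itemize}
The paper avoids a global grading altogether. The even generators $E_{ij}$ do not change the $\omega$-coordinates, so \lemref{lem: wtcon} directly bounds the powers $E_{ij}^s v$. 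The odd generators are handled using $E_\beta^2=0$ and the supercommutativity of $\mathfrak{k}_+$.

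Once Step 1 is repaired, your Steps 2 and 3 are correct and agree with the paper's argument. Passing through the $\mathfrak{k}$-stable space $\{x\in V:\mathfrak{k}_+x=0\}$ is a slightly cleaner way of phrasing the paper's final step.
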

\begin{proof}
As $V$ is simple and admits a weight-space decomposition, there exists a nonzero weight vector $v\in V$ with weight $\lambda$ 
such that $V= \mathrm{U}(\mathfrak{gl}_{m|n})v$. By \lemref{lem: wtcon}, we have $\lambda_i-\lambda_j\leq 0$
for all $i\in\{1,\ldots,p\}$ and $j\in\{p+1,\ldots,p+q\}$, and as the action of $E_{ij}$ on $v$ increases the weight 
$\lambda_i-\lambda_j$ by 2, there exists a non-negative integer $s_{ij}$ such that $E_{ij}^{s_{ij}}v\neq 0$ and $E_{ij}^{s_{ij}+1}v=0$.
It follows that there exists a $\Phi^{+}_{\mathrm{nc},\bar{0}}$-highest-weight vector $w$; 
that is, $E_{\alpha}w=0$ for all $\alpha\in \Phi^{+}_{\mathrm{nc},\bar{0}}$. 
Since $E^2_{\beta}=0$ and $[E_{\alpha}, E_{\beta}]=0$ 
for any $\alpha\in \Phi^{+}_{\mathrm{nc},\bar{0}}$ and $\beta\in \Phi^{+}_{\mathrm{nc},\bar{1}}$, 
there exists a $\Phi_{\mathrm{nc}}^+$-highest-weight vector $w'$ such that $E_{\alpha}w'= 0$ 
for all $\alpha\in \Phi_{\mathrm{nc}}^+$. 
As $V$ is admissible unitary, $w'$ generates a finite-dimensional 
$\mathfrak{k}$-module, so there exists $X\in \mathrm{U}(\mathfrak{k})$ such that $u=Xw'$ is a $\Phi_{c}^+$-highest-weight vector.
As $[\mathfrak{k}_+, \mathfrak{k}]\subseteq \mathfrak{k}_+$, $u$ is also a $\Phi_{\mathrm{nc}}^+$-highest-weight vector, 
hence a $\Phi^+$-highest-weight vector, so $V$ is a highest-weight module.
The inequalities satisfied by the weight components follow from \lemref{lem: wtcon} and  the admissibility of $V$.
\end{proof}

\subsection{Unitary highest-weight modules}
\label{Sec:Uhwm}

Let $\Dpqn$ denote the set of 
\textit{$\Phi^+_{\mathrm{c}}$-dominant integral weights}; that is, weights of the form
\begin{align}\label{def: Lambda}
 (\lambda_1, \dots , \lambda_{p+q}, \omega_1, \dots, \omega_n)
 = \sum_{i=1}^{p+q}\lambda_i\epsilon_i + \sum_{\mu=1}^n \omega_{\mu} \delta_{\mu},
\end{align}
where $\lambda_i, \omega_{\mu}\in \mathbb{R}$ for all $i,\mu$, and such that
\begin{align}\label{eq: k-high}
\begin{aligned}
 \lambda_{i}-\lambda_{i+1}&\in \mathbb{Z}_+, &&\quad i \in \{1,\dots, p-1\}\cup\{p+1,\dots, p+q-1\}, \\[.1cm]
 \omega_{\mu}-\omega_{\mu+1}&\in \mathbb{Z}_+, &&\quad \mu\in \{ 1,\dots, n-1\}. 
\end{aligned}
\end{align}

The unique simple $\mathfrak{k}$-module of highest weight $\Lambda\in \Dpqn$ is denoted by $L_0(\Lambda)$. 
We can turn $L_0(\Lambda)$ into a $(\mathfrak{k}\oplus\mathfrak{k}_{+})$-module by letting $\mathfrak{k}_+$ act by zero,
and we use this to define the highest-weight $\mathfrak{gl}_{p+q|n}$-module
\begin{align}\label{eq: hwmod}
 V(\Lambda):= {\rm U}(\mathfrak{gl}_{p+q|n}) \otimes_{\,{\rm U}(\mathfrak{k}\,\oplus\,\mathfrak{k}_{+})} \!L_0(\Lambda). 
\end{align}
The quotient $L(\Lambda)$ of $V(\Lambda)$ by its unique maximal proper submodule thus yields an irreducible
$\mathfrak{gl}_{p+q|n}$-module with highest weight $\Lambda$. 
By the PBW theorem for $\mathfrak{gl}_{p+q|n}$, we have $V(\Lambda)={\rm U}(\mathfrak{k}_{-}) \otimes L_0(\Lambda)$. 
Together with the convention ${\rm deg} (E_{ai})=1$ for $a\in\{p+1,\ldots,p+q+n\}$ and $i\in\{1,\ldots,p\}$,
this yields the $\mathbb{Z}_+$-grading
\begin{align}\label{eq: VLZ}
 V(\Lambda)= \bigoplus_{k\in \mathbb{Z}_+} V_k(\Lambda),
\end{align}
where $V_0(\Lambda)=L_{0}(\Lambda)$ and each $V_{k}(\Lambda)$ is a finite-dimensional $\mathfrak{k}$-module. 
\begin{proposition}\label{prop: ineqcon}
Let $\Lambda=(\lambda_1, \dots , \lambda_{p+q}, \omega_1, \dots, \omega_n)\in \Dpqn$, and suppose $L(\Lambda)$ 
is unitary. Then, the following inequalities hold: 
\begin{enumerate}
\item[{\rm (1)}] $(\Lambda, \alpha)\geq 0$ for all $\alpha\in \Phi_{\mathrm{c}}^+$.
\vspace{0.15cm}
\item[{\rm (2)}] $(\Lambda, \beta)\leq 0$ for all $\beta\in \Phi_{\mathrm{nc}}^+$. 
\vspace{0.15cm}
\item[{\rm (3)}] $\lambda_{p+1}\geq\cdots\geq\lambda_{p+q}\geq-\omega_n\geq\cdots
 \geq-\omega_1\geq\lambda_1\geq\cdots\geq\lambda_p$. 
\end{enumerate} 
\end{proposition}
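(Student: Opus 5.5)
Part (3) comes first and is essentially already done. $L(\Lambda)$ is a unitary simple highest-weight module, so its highest weight $\Lambda$ is a weight of an admissible unitary module. Indeed, $L(\Lambda)$ is a quotient of $V(\Lambda)$, whose graded pieces $V_k(\Lambda)$ are finite-dimensional $\mathfrak{k}$-modules. Unitarity then makes the $\mathfrak{k}$-action completely reducible, since the orthogonal complement of a $\mathfrak{k}$-submodule is again a submodule under the star-operation restricted to $\mathfrak{k}$. Applying \lemref{lem: wtcon} to the weight $\Lambda$ gives
$$
\lambda_i\le -\omega_\mu\le \lambda_j
$$
for all $i\le p<j$ and all $\mu$. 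The remaining orderings inside each block, $\lambda_1\ge\cdots\ge\lambda_p$, $\lambda_{p+1}\ge\cdots\ge\lambda_{p+q}$ and $-\omega_n\ge\cdots\ge-\omega_1$, follow from $\Lambda\in\Dpqn$, i.e.\ from \eqref{eq: k-high}. Chaining these gives (3), exactly as in the last line of the proof of \propref{prop: highest}.

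Next, (1) and (2) are read off from (3) by evaluating the bilinear form root by root, using $(\epsilon_i,\epsilon_j)=\delta_{ij}$, $(\delta_\mu,\delta_\nu)=-\delta_{\mu\nu}$ and $(\epsilon_i,\delta_\mu)=0$.

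For (1), take compact roots $\alpha\in\Phi^+_{\mathrm c}$.
\begin{itemize}
\item[(a)] If $\alpha=\epsilon_i-\epsilon_j$ with $i<j$ both $\le p$ or both $>p$, then $(\Lambda,\alpha)=\lambda_i-\lambda_j\ge0$ by \eqref{eq: k-high}.
\item[(b)] If $\alpha=\delta_\mu-\delta_\nu$ with $\mu<\nu$, then $(\Lambda,\alpha)=-\omega_\mu+\omega_\nu=(-\omega_\mu)-(-\omega_\nu)$. This is $\ge0$ because $\omega_\mu\ge\omega_\nu$. (The sign should be double-checked against the chain $-\omega_n\ge\cdots\ge-\omega_1$, which gives $\omega_1\ge\cdots\ge\omega_n$; so in fact $(\Lambda,\alpha)=\omega_\nu-\omega_\mu\le 0$.) I will recheck this case carefully against the paper's conventions before writing it up.
\item[(c)] If $\alpha=\epsilon_j-\delta_\mu$ with $j>p$, then $(\Lambda,\alpha)=\lambda_j+\omega_\mu\ge0$, by $-\omega_\mu\le\lambda_j$ from (3).
\end{itemize}

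For (2), take non-compact roots $\beta\in\Phi^+_{\mathrm{nc}}$.
\begin{itemize}
\item[(a)] If $\beta=\epsilon_i-\epsilon_j$ with $i\le p<j$, then $(\Lambda,\beta)=\lambda_i-\lambda_j\le0$ by (3).
\item[(b)] If $\beta=\epsilon_i-\delta_\mu$ with $i\le p$, then $(\Lambda,\beta)=\lambda_i+\omega_\mu\le0$, again by (3).
\end{itemize}

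The main obstacle is the sign bookkeeping for the purely odd-index compact roots $\delta_\mu-\delta_\nu$, since the form is negative on the $\delta$'s. If the naive computation really gives $\le0$ in case (1)(b), I would reinterpret the intended statement with the convention under which compact dominance is measured for the $\mathfrak{gl}_n$ block, and verify it against \eqref{eq: rhoform}. Otherwise the argument is a direct combination of \lemref{lem: wtcon}, admissibility, and \eqref{eq: k-high}.
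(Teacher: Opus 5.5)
Your hesitation in case (1)(b) is justified, and it should be resolved against the statement rather than by further checking. With $(\delta_\mu,\delta_\nu)=-\delta_{\mu,\nu}$ one has $(\Lambda,\delta_\mu-\delta_\nu)=\omega_\nu-\omega_\mu\le0$ for $\mu<\nu$. So your parenthetical is right, and the sentence ``This is $\ge0$'' just before it is wrong. The sign is confirmed by \eqref{eq: rhoform}, which gives $(\rho,\delta_\mu-\delta_\nu)=\mu-\nu<0$, and by the proof of \lemref{lem: U}, which uses $(\Lambda,\delta_n-\delta_\mu)\ge0$.

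Consequently (1) is false as literally stated whenever $\omega_\mu>\omega_\nu$. Take $p=q=1$, $n=2$ and $\Lambda=(-3,2,1,0)$. Then $(\Lambda+\rho,\epsilon_2-\delta_2)=1>0>-1=(\Lambda+\rho,\epsilon_1-\delta_1)$, so \textbf{(U1)} holds. Independently of the paper's sufficiency arguments, $\Lambda=\lambda^\flat$ for $\lambda=(2,1,0)\in\mathcal{P}^3_{1,1|2}$, so $L(\Lambda)$ is unitary by \thmref{thm: Howe}. Yet $(\Lambda,\delta_1-\delta_2)=-1$.

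The paper's own proof of this case asserts $(\Lambda,\alpha)=\omega_\mu-\omega_\nu$, which is a sign slip. It is harmless for the paper, since the later use of the proposition only needs (3). So instead of a vague ``reinterpretation'', state and prove the corrected claim: (1) holds with $(\Lambda,\alpha)$ replaced by $\Lambda(h_\alpha)$, where $h_\alpha=[E_\alpha,E_{-\alpha}]$. This agrees with $(\Lambda,\alpha)$ for every compact positive root except $\alpha=\delta_\mu-\delta_\nu$. For those roots, $\Lambda(h_\alpha)=\omega_\mu-\omega_\nu=-(\Lambda,\alpha)\ge0$ is just \eqref{eq: k-high} and uses no unitarity.

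Apart from this, your argument is correct and is the paper's argument in a different order. The paper evaluates two norms directly on the highest-weight vector:
\begin{itemize}
\item $\langle E_{ji}v_\Lambda,E_{ji}v_\Lambda\rangle=-(\lambda_i-\lambda_j)\langle v_\Lambda,v_\Lambda\rangle$ for $i\le p<j$;
\item $\langle E_{m+\mu,i}v_\Lambda,E_{m+\mu,i}v_\Lambda\rangle=\pm(\lambda_i+\omega_\mu)\langle v_\Lambda,v_\Lambda\rangle$, with sign $+$ for $i>p$ and $-$ for $i\le p$.
\end{itemize}
These give the odd part of (1) and all of (2), and the paper then chains them with \eqref{eq: k-high} to get (3). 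You instead get (3) from \lemref{lem: wtcon}, which is the same norm computation for an arbitrary weight vector, and read (1) and (2) off it.

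The direct route obtains $\lambda_i\le\lambda_j$ for $i\le p<j$ from $E_{ij}v_\Lambda=0$, without passing through some $-\omega_\mu$, so it does not need $n\ge1$. Yours needs an odd index to link the two even blocks. Your admissibility check is correct but not actually needed, since the proof of \lemref{lem: wtcon} only uses a weight vector.
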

\begin{proof}
Let $v_{\Lambda}$ be a highest-weight vector of $L(\Lambda)$.
If $\alpha\in \Phi_{\mathrm{c}, \bar{0}}^+$, then (i) $\alpha=\epsilon_i-\epsilon_j$ for some
$i,j\in\{1,\ldots,p\}$ or $i,j\in\{p+1,\ldots,p+q\}$ such that $i<j$, 
in which case $(\Lambda,\alpha)=\lambda_i-\lambda_j\geq0$ by \eqnref{eq: k-high}, 
or (ii) $\alpha=\delta_\mu-\delta_\nu$ for some $\mu,\nu\in\{1,\ldots,n\}$ such that $\mu<\nu$, in which case 
$(\Lambda,\alpha)=\omega_\mu-\omega_\nu\geq0$, again by \eqnref{eq: k-high}.
If $\alpha\in \Phi_{\mathrm{c},\bar{1}}^+$, then $\alpha= \epsilon_i-\delta_{\mu}$ for some
$i\in\{p+1,\ldots,p+q\}$ and $\mu\in\{1,\ldots,n\}$, so $(E_{\mu i})^{\medstar}= E_{i\mu}$ and 
$$
 0\leq \langle E_{\mu i}v_{\Lambda}, E_{\mu i}v_{\Lambda}\rangle 
 =\langle v_{\Lambda}, E_{i\mu}E_{\mu i}v_{\Lambda}\rangle 
 = \langle v_{\Lambda}, (E_{ii}+E_{\mu\mu})v_{\Lambda}\rangle 
 = (\lambda_i+\omega_\mu)\,\langle v_{\Lambda}, v_{\Lambda} \rangle,
$$
hence $0\leq \lambda_i+\omega_\mu=(\Lambda, \epsilon_i-\delta_{\mu})=(\Lambda,\alpha)$. 
If $\beta\in \Phi_{\mathrm{nc},\bar{0}}^+$, then $\beta= \epsilon_i-\epsilon_j$ for some
$i\in\{1,\ldots,p\}$ and $j\in\{p+1,\ldots,p+q\}$, so $(E_{ij})^{\medstar}=-E_{ji}$ and
$0\leq \langle E_{ji}v_{\Lambda}, E_{ji}v_{\Lambda}\rangle 
 =-(\lambda_i-\lambda_j)\,\langle v_{\Lambda}, v_{\Lambda} \rangle$,
hence $0\geq\lambda_i-\lambda_j=(\Lambda,\epsilon_i-\epsilon_j)=(\Lambda,\beta)$.
If $\beta\in \Phi_{\mathrm{nc},\bar{1}}^+$, then $\beta= \epsilon_i-\delta_\mu$ for some
$i\in\{1,\ldots,p\}$ and $\mu\in\{1,\ldots,n\}$, so $(E_{i\mu})^{\medstar}=-E_{\mu i}$ and
$0\leq \langle E_{\mu i}v_{\Lambda}, E_{\mu i}v_{\Lambda}\rangle 
 =-(\lambda_i+\omega_\mu)\,\langle v_{\Lambda}, v_{\Lambda} \rangle$,
hence $0\geq\lambda_i+\omega_\mu=(\Lambda,\epsilon_i-\delta_\mu)=(\Lambda,\beta)$.
Part (3) follows by combining \eqnref{eq: k-high} with $0\leq \lambda_i+\omega_\mu$ for $i\in\{p+1,\ldots,p+q\}$
and $0\geq\lambda_i+\omega_\mu$ for $i\in\{1,\ldots,p\}$.
\end{proof}

By \propref{prop: ineqcon}, $L(\Lambda)$ is an infinite-dimensional unitary $\mathfrak{gl}_{p+q|n}$-module unless
$$
 \lambda_{p+1} = \cdots = \lambda_{p+q} = -\omega_n = \cdots = -\omega_1 = \lambda_1 = \cdots = \lambda_p.
$$
(If these equalities all hold, then $L(V)$ is a 1-dimensional unitary module.)
Moreover, it inherits the $\mathbb{Z}_+$-grading \eqref{eq: VLZ}:
\begin{align}\label{eqn: decomp}
 L(\Lambda)=\bigoplus_{k\in \mathbb{Z}_+}L_k(\Lambda),
\end{align}
where each summand $L_k(\Lambda)$ is a finite-dimensional $\mathfrak{k}$-module.

For each nonzero $s\in \mathbb{R}$, we also introduce the $1$-dimensional $\mathfrak{gl}_{p+q|n}$-module
$\mathbb{C}_{s}$ with action given by 
$$
 E_{ab}\cdot 1=\delta_{a,b}(-1)^{[a]}s\cdot 1
$$
for all $a,b\in\{1,\ldots,p+q+n\}$.
Accordingly, $\mathbb{C}_{s}$ has the unique weight
$$
\Lambda_s=(\underbrace{s,\ldots,s}_{p+q},\underbrace{-s,\ldots,-s}_{n})
$$
and is clearly a unitary module. For any $\Lambda\in \Dpqn$, the shifted weight
$$
 \Lambda^{(s)}:=\Lambda+\Lambda_s
 = (\lambda_1 + s, \dots, \lambda_{p+q} + s, \omega_1 - s, \dots, \omega_{n}-s)
$$
is the highest weight of the $\mathfrak{gl}_{p+q|n}$-module $V(\Lambda)\otimes\mathbb{C}_s$.

\subsection{Finite-dimensional unitary modules}
\label{subsec: findim}

For later use, here we recall the classification of finite-dimensional unitary $\mathfrak{gl}_{m|n}$-modules. 

First, we have the triangular decomposition 
\begin{align}
 \mathfrak{gl}_{m|n}=\mathfrak{g}_{-1}\oplus \mathfrak{g}_{0}\oplus \mathfrak{g}_{1},
\label{triangular}
\end{align}
where $\mathfrak{g}_{0}:=\mathfrak{gl}_{m} \oplus \mathfrak{gl}_{n}$ is the even subalgebra,
while $\mathfrak{g}_{\pm1}:=\mathrm{span}\{E_\alpha\,|\,\alpha\!\in\!\pm\,\Phi_{\bar 1}^+\}$.
Second, every finite-dimensional simple $\mathfrak{gl}_{m|n}$-module is uniquely characterised 
by its highest weight $\Lambda=(\lambda_1,\dots,\lambda_m, \omega_1, \dots, \omega_n)$, which must satisfy
$\lambda_i-\lambda_{i+1}\in \mathbb{Z}_+$ and $\omega_\mu-\omega_{\mu+1}\in \mathbb{Z}_+$ 
for all $i\in\{1,\ldots,m-1\}$ and all $\mu\in\{1,\ldots,n-1\}$. That is, $\Lambda$ is a dominant integral weight of
$\mathfrak{gl}_{m} \oplus \mathfrak{gl}_{n}$, and we denote the corresponding simple module by $L(\Lambda)$.

Following \cite{Kac77a}, $L(\Lambda)$ and the corresponding weight 
$\Lambda$ are said to be \textit{typical} if 
$$
 \prod_{\alpha\in \Phi^+_{\bar{1}}}(\Lambda+\rho, \alpha)\neq 0;
$$ 
they are called \textit{atypical} otherwise. 
Associated to the same weight $\Lambda$, we also have the \textit{Kac module} defined by
\begin{align}
 K(\Lambda):= {\rm U}(\mathfrak{gl}_{m|n}) \otimes_{\,{\rm U}(\mathfrak{g}_{0}\,\oplus\,\mathfrak{g}_{1}\!)} \!L_{0}(\Lambda),
\label{Kac}
\end{align}
where $L_0(\Lambda)$ is a simple $\mathfrak{g}_{0}$-module equipped with trivial $\mathfrak{g}_1$-action.
If $\Lambda$ is typical, then $L(\Lambda)$ is isomorphic to $K(\Lambda)$, while if $\Lambda$ is atypical, 
then $K(\Lambda)$ is non-simple and $L(\Lambda)$ is isomorphic to $K(\Lambda)/M(\Lambda)$ where $M(\Lambda)$ is the unique 
maximal proper submodule of $K(\Lambda)$.

Following \cite{SNR77,GZ90b}, there exists an induced non-degenerate Hermitian form $\langle -,- \rangle$ on $L(\Lambda)$ 
(unique up to a scalar multiple) which is positive-definite on $L_0(\Lambda)$ and such that for all $v,w\in L(\Lambda)$ and 
all $a,b\in\{1,\ldots,m+n\}$,
$$
 \langle E_{ab}v,w \rangle= (-1)^{\theta ([a]+[b])} \langle v, E_{ba}w \rangle,
$$
for some fixed $\theta\in \{0,1\}$.
With respect to this form, the $\mathbb{Z}$-graded decomposition \eqref{eqn: decomp} is orthogonal.
Following \cite{GZ90b}, we say that $L(\Lambda)$ is a \textit{type-1} (respectively \textit{type-2}) \textit{unitary module}
if the Hermitian form is positive-definite on $L(\Lambda)$ for $\theta=0$ (respectively $\theta=1$).

Let $D^+_{m|n}$ denote the set of real dominant integral weights of $\mathfrak{gl}_m\oplus\mathfrak{gl}_n$. 
\begin{theorem}[\cite{GZ90b}]\label{thm: type1unitary}
Let $\Lambda\in D^+_{m|n}$. Then, $L(\Lambda)$ is type-1 unitary if and only if
one of the following conditions holds:
\begin{enumerate}
\item[{\rm (1)}] $(\Lambda+ \rho, \epsilon_m-\delta_n)>0$.
\vspace{0.15cm}
\item[{\rm (2)}] There exists $\mu\in \{1,\dots, n\}$ such that 
$(\Lambda + \rho, \epsilon_m-\delta_{\mu}) = (\Lambda, \delta_{\mu}-\delta_n)=0$.
\end{enumerate}
\end{theorem}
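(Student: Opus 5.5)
This is the Gould--Zhang classification, and I would organise the proof around the Kac-module filtration $L(\Lambda)=\bigoplus_k L_k(\Lambda)$, with $L_k$ the image of $\Lambda^k(\mathfrak{g}_{-1})\otimes L_0(\Lambda)$. I would work throughout with the type-1 form, for which $E_{ab}^\dagger=E_{ba}$. This form is positive on $L_0(\Lambda)$ because $\Lambda$ is real dominant integral, and the induced form on $L(\Lambda)$ makes the $\mathbb{Z}$-grading orthogonal. The first step is a Casimir criterion. Let $C$ be the quadratic Casimir of $\mathfrak{gl}_{m|n}$ and $C_0$ that of $\mathfrak{g}_0$. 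On each $\mathfrak{g}_0$-constituent $L_0(\mu)\subseteq L_k$ one has
\[
\sum_{i,\nu}E_{\nu i}E_{i\nu}\;\propto\;\tfrac12\big[(\Lambda,\Lambda+2\rho)-(\mu,\mu+2\rho)\big]+\text{linear terms},
\]
up to sign conventions. This odd-quadratic expression is a sum of $\dagger$-squares. Inducting on $k$, positivity of the form on $L_{k+1}$ is then equivalent to strict positivity of this scalar on every constituent $L_0(\mu)$ that actually occurs in $L_{k+1}$ and is not killed in the quotient. The reason is that $E_{\nu i}$ maps $L_k$ onto $L_{k+1}$, so norms on $L_{k+1}$ are computed from norms on $L_k$ by the adjoint identity.

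For necessity, I would examine the first odd lowering step. Take $v=E_{m+\mu,m}v_\Lambda$, or more generally the $\mathfrak{g}_0$-highest vector in $L_1$ of weight $\Lambda-(\epsilon_m-\delta_\mu)$. Its norm is a positive multiple of $(\Lambda+\rho,\epsilon_m-\delta_\mu)$ plus corrections coming from projecting onto the $\mathfrak{g}_0$-highest component, and these corrections are controlled by $(\Lambda,\delta_\mu-\delta_n)$. If (1) fails, then $(\Lambda+\rho,\epsilon_m-\delta_n)\le 0$. I would let $\mu$ be minimal such that $\Lambda-(\epsilon_m-\delta_\mu)$ is $\mathfrak{g}_0$-dominant, so that $\omega_{\mu-1}>\omega_\mu=\cdots=\omega_n$. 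Then $(\Lambda+\rho,\epsilon_m-\delta_\mu)=(\Lambda+\rho,\epsilon_m-\delta_n)+(n-\mu)$, because the $\omega$'s agree in this range. The norm of the corresponding vector is proportional to this quantity. Nonnegativity of the norm therefore forces $(\Lambda+\rho,\epsilon_m-\delta_\mu)\ge 0$, and if the norm were zero the vector would lie in the radical. The step I expect to be hardest is excluding the case $(\Lambda+\rho,\epsilon_m-\delta_\mu)>0$ together with $(\Lambda+\rho,\epsilon_m-\delta_n)\le 0$. Here I would go to higher $L_k$: I would apply products $E_{m+\mu,m}E_{m+\mu+1,m}\cdots$, or use vectors involving $\epsilon_{m-1}$, and show that some constituent has a negative Casimir scalar unless (2) holds. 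Running through the atypicality conditions $(\Lambda+\rho,\epsilon_i-\delta_\nu)=0$ in Kac's sense then handles the degenerate vanishing cases.

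For sufficiency, I would argue as follows. In case (1), $\Lambda$ is typical, since every $(\Lambda+\rho,\epsilon_i-\delta_\nu)\ge(\Lambda+\rho,\epsilon_m-\delta_n)>0$. Hence $L(\Lambda)=K(\Lambda)$, and the constituents of $K(\Lambda)$ have weights $\mu=\Lambda-\sum_{\gamma\in S}\gamma$ with $S$ a set of odd positive roots. The Casimir differences are then sums of the positive numbers $(\Lambda+\rho,\gamma)$ minus quadratic terms, which I would bound using the $\mathfrak{g}_0$-dominance of $\mu$. This yields positivity inductively. In case (2), $L(\Lambda)$ is atypical. I would realise it as a covariant tensor module, or a shift $\mathbb{C}_s$-twist of one, inside $(\mathbb{C}^{m|n})^{\otimes N}$, which is manifestly type-1 unitary via the standard form. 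Alternatively, I would apply a continuity argument: approach $\Lambda$ by typical weights along the ray $\Lambda+t(\epsilon_1+\cdots+\epsilon_m)$ and take the limit of positive semidefinite forms, whose radical is exactly $M(\Lambda)$.
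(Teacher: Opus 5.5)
The paper does not prove this theorem. It is quoted from \cite{GZ90b} and used only as an input, for instance in \lemref{lem: kmodL}, so there is no in-paper argument to compare with. Your plan is essentially the compact, type-1 counterpart of the machinery the paper uses for \thmref{thm: main}. It has three ingredients: the quadratic-invariant criterion of \lemref{lem: kinv} and \propref{prop: nscon}; an estimate in the spirit of \propref{prop: type1suff}; and an explicit realisation of the atypical weights, playing the role of \secref{sec: Integral}. Here the invariant is $\sum_{i,\nu}E_{m+\nu,i}E_{i,m+\nu}$, which acts on a $\mathfrak{g}_0$-type $\xi=\Lambda-\theta$ by $(\Lambda+\rho,\theta)-\tfrac12(\theta,\theta)$ and is now a sum of squares with a plus sign. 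This strategy does prove the theorem, but several steps need correcting or completing.

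\textbf{Necessity.} The index you want is the \emph{maximal} $\mu$ with $\Lambda-\epsilon_m+\delta_\mu$ dominant, that is, the start of the last block $\omega_{\mu-1}>\omega_\mu=\cdots=\omega_n$. The minimal such index is always $\mu=1$.

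There are no ``corrections'' at level one. Since $(\theta,\theta)=0$ for $\theta=\epsilon_m-\delta_\mu$, you get exactly $(\Lambda+\rho,\epsilon_m-\delta_\mu)\langle w,w\rangle=\sum_{i,\nu}\langle E_{i,m+\nu}w,E_{i,m+\nu}w\rangle$. A nonzero scalar also forces $w\notin M(\Lambda)$.

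Write $x=\lambda_m+\omega_n$. The theorem then says precisely: unitary if and only if $x>n-1$ or $x\in\{\mu-1,\dots,n-1\}$. Level one gives $x\ge\mu-1$. An integer $x\le n-1$ gives (2) with index $x+1$. So your ``hardest case'' is exactly $x\in(\mu-1,n-1)\setminus\mathbb{Z}$, and here you have not named the destabilising constituent. One works as follows:
\begin{itemize}
\item Every $(\Lambda+\rho,\epsilon_i-\delta_\nu)$ lies in $x+\mathbb{Z}$, so $\Lambda$ is typical and $L(\Lambda)=K(\Lambda)$.
\item Put $k=\lfloor x\rfloor-\mu+2$, so that $k\ge1$ and $\mu+k\le n$, and set $\theta=\sum_{t=0}^{k}(\epsilon_m-\delta_{\mu+t})$.
\item Then $\Lambda-\theta$ is $\mathfrak{g}_0$-dominant. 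It occurs in $\Lambda^{k+1}(\mathfrak{g}_{-1})\otimes L_0(\Lambda)$, because $-\theta$ is an extremal weight of $S^{k+1}((\mathbb{C}^m)^*)\otimes\Lambda^{k+1}(\mathbb{C}^n)$.
\item Its scalar is $(k+1)(x-\lfloor x\rfloor-1)<0$.
\end{itemize}

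\textbf{Sufficiency in case (1).} Dominance of the constituent is not what drives the estimate; distinctness of the odd roots in $S$ is. Let $a_i$ and $b_\nu$ be the coefficients of $\epsilon_i$ and $-\delta_\nu$ in $\theta$. Drop the non-negative terms $\lambda_i-\lambda_m+m-i+\omega_\nu-\omega_n$. Then use $\sum_{(i,\nu)\in S}(n-\nu)\ge\sum_i\binom{a_i}{2}$ and $\sum_\nu b_\nu^2\ge|S|$. This gives $(\Lambda+\rho,\theta)-\tfrac12(\theta,\theta)\ge|S|\,(\Lambda+\rho,\epsilon_m-\delta_n)>0$. It is the analogue of using $b_{i\mu}\in\{0,1\}$ in \propref{prop: type1suff}.

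\textbf{Sufficiency in case (2).} The tensor realisation is correct once you check one thing. The weight $\Lambda^{(\omega_n)}$ must be the covariant highest weight of the partition with rows $\lambda_1+\omega_n,\dots,\lambda_m+\omega_n$, followed by the conjugate of $(\omega_1-\omega_n,\dots,\omega_{\mu-1}-\omega_n)$. This is a legitimate partition because $x\ge\mu-1$. Then $L(\Lambda)\cong L(\Lambda^{(\omega_n)})\otimes\mathbb{C}_{-\omega_n}$.

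Discard the continuity alternative. For integer $x<n-1$, the weights $\Lambda+t(\epsilon_1+\cdots+\epsilon_m)$ with $0<|t|<1$ are non-unitary by the two necessity arguments above. So there are no positive semidefinite forms to take a limit of: these atypical unitary points are isolated on the ray, unlike the non-compact situation exploited in \lemref{lem: intper}. The limit argument works only for $x=n-1$, approaching from $t>0$.
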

\begin{remark}\label{rmk: typ}
If $(\Lambda+\rho, \epsilon_m-\delta_{n})>0$, then for every $i\in\{1,\ldots,m\}$ and $\mu\in\{1,\ldots,n\}$, 
\begin{align*}
 (\Lambda+ \rho,\epsilon_i-\delta_{\mu}) 
 &= (\Lambda+\rho,\epsilon_i-\epsilon_m + \epsilon_m-\delta_n+ \delta_n-\delta_{\mu})
 \\[.1cm]
 &=(\lambda_i-\lambda_m + m-i)+(\Lambda+\rho, \epsilon_m-\delta_{n})+ (\omega_{\mu}-\omega_n + n-\mu).
\end{align*}
As this is strictly positive, the conditions (1) and (2) in \thmref{thm: type1unitary} are mutually exclusive,
so any simple unitary module $L(\Lambda)$ with $\Lambda\in D^+_{m|n}$ is either typical or atypical, 
depending on whether condition (1) or (2) is satisfied.
\end{remark}

Type-1 and type-2 unitary modules are related by duality, although this relationship is not immediately evident from their definitions.
However, it was shown in \cite{GZ90b} (see also \propref{prop: dualstar}) that a simple $\mathfrak{gl}_{m|n}$-module $L(\Lambda)$ 
is type-1 unitary if and only if the dual module $L(\Lambda)^{\ast}$ is type-2 unitary. 

Note that $L(\Lambda)$ has a natural $\mathbb{Z}$-grading $L(\Lambda)= \bigoplus_{k=0}^{d_{\Lambda}} L_k(\Lambda)$ inherited 
from the Kac module, where $0\leq d_{\Lambda}\leq mn$ and each $L_{k}(\Lambda)$ is a $\mathfrak{g}_0$-module. 
In particular, $L_{0}(\Lambda)$ is simple with highest weight $\Lambda$.
Following \cite{GZ90b}, the $\mathfrak{g}_0$-highest weight of the minimal $\mathbb{Z}$-graded component 
$L_{d_{\Lambda}}(\Lambda)$ is determined as follows. 
If $\Lambda$ is typical, we set $\mu=n+1$; otherwise we set $\mu$ to be the odd index 
satisfying condition (2) in \thmref{thm: type1unitary}, and introduce
$$
 \mu_{m}=\mu-1,\qquad
 \mu_i = {\rm min}\{n, \mu_m+(\Lambda,\epsilon_i-\epsilon_m)\}, \qquad 
 i=1,\ldots,m-1.
$$
Then, the $\mathfrak{g}_0$-highest weight of $L_{d_{\Lambda}}(\Lambda)$ is 
$$
 \bar{\Lambda}=\Lambda- \sum_{i=1}^m \sum_{\nu=1}^{\mu_i} (\epsilon_i-\delta_{\nu}),
$$
with corresponding $\mathfrak{g}_0$-highest-weight vector given by
\begin{align}\label{eq: Omega}
 \Omega_{m|n}=\Big(\prod_{i=1}^m \prod_{\nu=1}^{\mu_i}E_{m+\nu, i}\Big)v_{\Lambda}. 
\end{align}
A classification of the type-2 unitary modules now follows. 
\begin{theorem}[\cite{GZ90b}]\label{thm: type2}
Let $\Lambda\in D^+_{m|n}$. Then, $L(\Lambda)$ is type-2 unitary if and only if
one of the following conditions holds:
\begin{enumerate}
\item[{\rm (1)}] $(\Lambda+ \rho, \epsilon_1-\delta_1)<0$.
\vspace{0.15cm}
\item[{\rm (2)}] There exists $k\in \{1,\dots, m\}$ such that 
$(\Lambda + \rho, \epsilon_k-\delta_{1}) = (\Lambda, \epsilon_1-\epsilon_k)=0.$ 
\end{enumerate}
\end{theorem}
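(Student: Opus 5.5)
The plan is to deduce Theorem~\ref{thm: type2} from Theorem~\ref{thm: type1unitary} by duality, as in \cite{GZ90b}. A simple module $L(\Lambda)$ is type-2 unitary if and only if $L(\Lambda)^{\ast}$ is type-1 unitary. One direction is Proposition~\ref{prop: dualstar}: the type-2 star-operation $E_{ab}\mapsto -(-1)^{[a]+[b]}E_{ba}$ is dual to the type-1 one, up to the harmless overall sign convention. The converse direction uses $L(\Lambda)^{\ast\ast}\cong L(\Lambda)$.

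The steps are as follows.
\begin{enumerate}
\item Identify the highest weight of $L(\Lambda)^{\ast}$. The dual of a finite-dimensional simple module is simple. Its highest weight is $-w(\text{lowest weight of }L(\Lambda))$ in the appropriate sense, namely minus the lowest weight of $L(\Lambda)$. The lowest weight is obtained from the $\mathfrak{g}_0$-lowest weight of the bottom graded component $L_{d_\Lambda}(\Lambda)$. Taking $\bar\Lambda$ as given above Theorem~\ref{thm: type2}, the lowest weight is $w_0\bar\Lambda$, where $w_0$ is the longest element of the Weyl group of $\mathfrak{gl}_m\oplus\mathfrak{gl}_n$. So the highest weight is $\Lambda^{\ast}=-w_0\bar\Lambda$, which reverses and negates the coordinates of $\bar\Lambda$ separately in the $\epsilon$-block and the $\delta$-block.
\item Translate the type-1 conditions for $\Lambda^{\ast}$ into conditions on $\Lambda$. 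This is cleanest using the involution $\tau:\epsilon_i\mapsto-\epsilon_{m+1-i}$, $\delta_\mu\mapsto-\delta_{n+1-\mu}$. It preserves $(-,-)$ and satisfies $\tau(\epsilon_m-\delta_n)=-(\epsilon_1-\delta_1)$. One checks that $\tau\rho$ and $\rho$ differ by a multiple of the supertrace-type weight $\sum_i\epsilon_i-\sum_\mu\delta_\mu$, which is orthogonal to every odd root.
\item Compare $\bar\Lambda$ with $\Lambda$. The shift $\bar\Lambda-\Lambda=-\sum_i\sum_{\nu\le\mu_i}(\epsilon_i-\delta_\nu)$ changes $(\,\cdot+\rho,\epsilon_1-\delta_1)$ by an explicit amount. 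In the typical case $\mu_i=n$ for all $i$, the shift is $-n\sum\epsilon_i+m\sum\delta_\mu$. Then $(\Lambda^{\ast}+\rho,\epsilon_m-\delta_n)>0$ becomes $(\Lambda+\rho,\epsilon_1-\delta_1)<0$ up to the constant $m-n$ coming from $\rho$, which should cancel exactly.
\end{enumerate}

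The main obstacle is the atypical case. There the shape $(\mu_i)$ of $\bar\Lambda$ depends on $\Lambda$, and one must show that condition (2) of Theorem~\ref{thm: type1unitary} for $\Lambda^{\ast}$, with some index $\mu$, corresponds exactly to condition (2) of Theorem~\ref{thm: type2} for $\Lambda$, with some index $k$. The plan is to take $k$ to be the number of $i$ with $\mu_i=n$ reversed appropriately, i.e. $k=m+1-(\text{row where the staircase stops})$. Then verify $(\Lambda,\epsilon_1-\epsilon_k)=0$ and $(\Lambda+\rho,\epsilon_k-\delta_1)=0$ by direct computation with \eqref{eq: rhoform}.

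If this bookkeeping becomes unwieldy, a cleaner fallback avoids the explicit $\bar\Lambda$. One notes that $\mathfrak{gl}_{m|n}\cong\mathfrak{gl}_{n|m}$ via the supertranspose composed with a parity swap. This maps the standard Borel to the opposite-type Borel and interchanges the two types of star-operation. Theorem~\ref{thm: type1unitary} for $\mathfrak{gl}_{n|m}$ then yields Theorem~\ref{thm: type2} after an odd-reflection relabelling. The duality route above is the one to try first.
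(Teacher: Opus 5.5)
The gap is in Step 1, and it sits in exactly the atypical case you flag as the main obstacle.

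The formula for $\bar\Lambda$ stated before Theorem~\ref{thm: type2} is only defined when $\Lambda$ is typical or satisfies condition (2) of Theorem~\ref{thm: type1unitary}. The index $\mu$, and hence the staircase $(\mu_i)$, is \emph{taken from} that condition. You apply the formula to the weight $\Lambda$ whose type-2 unitarity is being tested. An atypical such $\Lambda$ in general satisfies neither type-1 condition, so it has no staircase, and no choice of $\mu$ yields the lowest weight of $L(\Lambda)$.

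For example, in $\mathfrak{gl}_{2|1}$ the weight $\Lambda=(0,-1\,|\,-1)$ satisfies (2) with $k=1$, but $(\Lambda+\rho,\epsilon_2-\delta_1)=-2$. Here $L(\Lambda)\cong(S^2\mathbb{C}^{2|1})^*$ has lowest weight $-2\epsilon_1=(-2,0\,|\,0)$. By contrast, $w_0\bar\Lambda$ computed from the formula is $(-1,-1\,|\,0)$ for $\mu=1$ and $(-2,-1\,|\,1)$ for $\mu=n+1$. So in the atypical case neither direction has a valid starting point. ``Only if'' would need the lowest weight of an arbitrary atypical $L(\Lambda)$, and ``if'' would need that of $L(\Lambda)$ for $\Lambda$ satisfying (2); the proposal supplies neither.

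The duality has to be run the other way. If $L(\Lambda)$ is type-2 unitary, then $L(\Lambda)^*=L(\Lambda')$ is type-1 unitary, so the formula legitimately applies to $\Lambda'$ and $\Lambda=-w_0\overline{\Lambda'}$. Set $c=\#\{i:\mu_i=n\}$ for the staircase of $\Lambda'$. One checks $\lambda_1=\cdots=\lambda_{m-c}$ and $(\Lambda+\rho,\epsilon_{m-c}-\delta_1)=0$, so your $k$ is right once the staircase is read off $\Lambda'$ rather than $\Lambda$. That proves ``only if''. The ``if'' direction still requires showing that every $\Lambda$ satisfying (2) arises this way. That means inverting $\Lambda'\mapsto-w_0\overline{\Lambda'}$, or computing the bottom component of such $L(\Lambda)$ by the chain of $mn$ odd reflections, and this is missing.

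The cleanest repair is to make your fallback precise; it needs neither lowest weights nor odd reflections. Compose the isomorphism $\tau:\mathfrak{gl}_{n|m}\to\mathfrak{gl}_{m|n}$, $\widetilde E_{ab}\mapsto E_{\tilde a\tilde b}$, of Section~\ref{sec: lowest} with the automorphism $X\mapsto-X^{\mathrm{st}}$ of $\mathfrak{gl}_{m|n}$. The composite has three properties:
\begin{itemize}
\item it maps the standard Borel of $\mathfrak{gl}_{n|m}$ onto that of $\mathfrak{gl}_{m|n}$;
\item it sends $\widetilde E_{aa}\mapsto-E_{\tilde a\tilde a}$;
\item it intertwines the type-1 star on $\mathfrak{gl}_{n|m}$ with the type-2 star $E_{ab}\mapsto(-1)^{[a]+[b]}E_{ba}$ on $\mathfrak{gl}_{m|n}$.
\end{itemize}
Note the sign in the type-2 star: with your extra overall minus, each $E_{aa}$ would be skew-adjoint.

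Hence $L(\Lambda)$ is type-2 unitary if and only if the $\mathfrak{gl}_{n|m}$-module $L(\widetilde\Lambda)$ is type-1 unitary, where $\widetilde\Lambda=(-\omega_n,\dots,-\omega_1\,|\,-\lambda_m,\dots,-\lambda_1)$. For $k=m+1-\widetilde\mu$ one has $(\widetilde\Lambda+\widetilde\rho,\widetilde\epsilon_n-\widetilde\delta_{\widetilde\mu})=-(\Lambda+\rho,\epsilon_k-\delta_1)$ and $(\widetilde\Lambda,\widetilde\delta_{\widetilde\mu}-\widetilde\delta_m)=\lambda_k-\lambda_1$. So Theorem~\ref{thm: type1unitary} for $\widetilde\Lambda$ becomes exactly conditions (1) and (2).

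A smaller point: in Step 2, $-w_0\rho-\rho=n\sum_i\epsilon_i-m\sum_\mu\delta_\mu$, which is not orthogonal to the odd roots unless $m=n$. It is cancelled by the typical shift, as you anticipate in Step 3, so your typical case is fine.
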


\begin{remark}
As in \thmref{thm: type1unitary}, the two conditions in \thmref{thm: type2} are mutually exclusive.
\end{remark}

We conclude this section with the following well-known facts (cf. \lemref{lem: wtcon}). 
Let $L(\Lambda)$ be a finite-dimensional simple $\mathfrak{gl}_m$-module. 
Then, the module $L(\Lambda)$ is unitary if and only if $\Lambda$ is a real dominant integral weight. 
A Hermitian form on $L(\Lambda)$ is positive-definite and contravariant if it satisfies the following two conditions:
\begin{enumerate}
\vspace{0.15cm}
\item[(i)] $\langle v_{\Lambda}, v_{\Lambda}\rangle>0$, where $v_{\Lambda}$ is a highest-weight vector of $L(\Lambda)$;
\vspace{0.2cm}
\item[(ii)]
$\langle E_{ab}v,w \rangle= \langle v, E_{ba} w\rangle$ for all $v,w\in L(\Lambda)$ and all $a,b\in \{1, \dots, m\}$. 
\vspace{0.15cm}
\end{enumerate}
Such a form is unique up to scaling.

\section{Main classification result}
\label{sec: class}

Here, we present our main result: a classification of all unitary simple highest-weight $\mathfrak{u}(p,q|n)$-modules of
the form $L(\Lambda)$ with $\Lambda\in \Dpqn$; see \thmref{thm: main} below.
Here and in the remainder of this paper, we use the notation $m=p+q$.
\begin{lemma}\label{lem: U}
Let $\Lambda\in \Dpqn$. Then, the following six conditions are mutually exclusive:
\begin{enumerate}
\vspace{0.1cm}
\item[\em \textbf{(U1)}] $(\Lambda+\rho, \epsilon_m-\delta_n)>0$ and $(\Lambda+\rho, \epsilon_1-\delta_1)<0$.
\vspace{0.3cm}
\item[\em\textbf{(U2)}] $(\Lambda+\rho, \epsilon_m-\delta_n)>0$, and there exists $i\in \{1,\dots,p\}$ such that
$(\Lambda+\rho, \epsilon_i-\delta_1)=(\Lambda, \epsilon_i-\epsilon_1)=0$.
\vspace{0.3cm}
\item[\em \textbf{(U3)}] $(\Lambda+\rho, \epsilon_1-\delta_1)<0$, and there exists $\mu\in \{2,\dots, n\}$ such that 
$(\Lambda+\rho, \epsilon_m-\delta_\mu )=(\Lambda, \delta_\mu-\delta_n)=0$.
\vspace{0.3cm}
\item[\em \textbf{(U4)}] There exists $\mu\in \{2,\dots, n\}$ such that 
$(\Lambda+\rho, \epsilon_m-\delta_\mu )=(\Lambda, \delta_\mu-\delta_n)=0$,
and there exists $i\in \{1,\dots,p\}$ such that
$(\Lambda+\rho, \epsilon_i-\delta_1)=(\Lambda, \epsilon_i-\epsilon_1)=0$.
\vspace{0.3cm}
\item[\em \textbf{(U5)}] $(\Lambda+\rho, \epsilon_m-\delta_1)=(\Lambda, \delta_1-\delta_n)= 0$, 
and there exists $j\in\{p,\dots, m-1\}$ such that
$(\Lambda, \epsilon_1-\delta_1)<1-j$ and $(\Lambda, \epsilon_{j+1}-\epsilon_m)=0$.
\vspace{0.3cm}
\item[\em \textbf{(U6)}] $(\Lambda+\rho, \epsilon_m-\delta_1)=(\Lambda, \delta_1-\delta_n)= 0$,
and there exist $i\in \{1,\dots,p\}$ and $j\in\{p,\dots, m-1\}$ such that 
$(\Lambda, \epsilon_i-\epsilon_1)=(\Lambda, \epsilon_{j+1}-\epsilon_m)=0$
and $(\Lambda, \epsilon_i-\delta_1)=i-j$.
\vspace{0.1cm}
\end{enumerate}
\end{lemma}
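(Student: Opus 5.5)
The plan is to rewrite every condition in terms of the components of $\Lambda$ using \eqref{eq: rhoform}. With $(\rho,\epsilon_i-\delta_\mu)=m+1-i-\mu$ we get
$$
(\Lambda+\rho,\epsilon_i-\delta_\mu)=\lambda_i+\omega_\mu+m+1-i-\mu .
$$
In particular,
$$
(\Lambda+\rho,\epsilon_m-\delta_n)=\lambda_m+\omega_n+1-n,\qquad
(\Lambda+\rho,\epsilon_1-\delta_1)=\lambda_1+\omega_1+m-1,\qquad
(\Lambda+\rho,\epsilon_m-\delta_1)=\lambda_m+\omega_1 .
$$
Each of (U1)--(U6) is a conjunction of a \emph{first clause}, which concerns the pair $(\epsilon_m,\delta_\mu)$, and a \emph{second clause}, which concerns $\delta_1$ and the $\epsilon_i$. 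I would show separately that the first clauses split the conditions into three mutually exclusive blocks, $\{$U1,U2$\}$, $\{$U3,U4$\}$ and $\{$U5,U6$\}$, and then that the two conditions inside each block exclude each other.

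\emph{Step 1 (first clauses).} The three possible first clauses are:
\begin{itemize}
\item (a) $(\Lambda+\rho,\epsilon_m-\delta_n)>0$;
\item (b$_\mu$) $(\Lambda+\rho,\epsilon_m-\delta_\mu)=0=(\Lambda,\delta_\mu-\delta_n)$ for some $\mu\in\{2,\dots,n\}$;
\item (b$_1$) the same equalities with $\mu=1$.
\end{itemize}
To see that (a) excludes every (b$_\mu$), I would argue as in Remark~\ref{rmk: typ}, but only with $i=m$:
$$
(\Lambda+\rho,\epsilon_m-\delta_\mu)=(\Lambda+\rho,\epsilon_m-\delta_n)+(\omega_\mu-\omega_n)+(n-\mu).
$$
This is strictly positive, because $\omega_\mu-\omega_n\in\mathbb{Z}_+$ by \eqref{eq: k-high}. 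Only the $\delta$-dominance is used here, which $\Dpqn$ does provide; the full $\mathfrak{gl}_m$-dominance used in the Remark is not needed.

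For two different indices $\mu<\mu'$ both satisfying (b), the condition $\omega_\mu=\omega_n$ forces $\omega_{\mu'}=\omega_n$ by monotonicity. Hence
$$
(\Lambda+\rho,\epsilon_m-\delta_\mu)-(\Lambda+\rho,\epsilon_m-\delta_{\mu'})=\mu'-\mu\neq0,
$$
so at most one index $\mu$ qualifies. In particular (b$_1$) and (b$_\mu$) with $\mu\ge2$ cannot both hold.

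\emph{Step 2 (second clauses).} Suppose some $i\in\{1,\dots,p\}$ satisfies $(\Lambda+\rho,\epsilon_i-\delta_1)=0=\lambda_i-\lambda_1$. Then
$$
(\Lambda+\rho,\epsilon_1-\delta_1)=(\Lambda+\rho,\epsilon_i-\delta_1)+(\lambda_1-\lambda_i)+(i-1)=i-1\ge0 .
$$
This contradicts $(\Lambda+\rho,\epsilon_1-\delta_1)<0$, which separates U1 from U2 and U3 from U4. For U5 versus U6, the hypothesis $\lambda_i=\lambda_1$ in U6 gives $(\Lambda,\epsilon_1-\delta_1)=(\Lambda,\epsilon_i-\delta_1)=i-j'\ge 1-j'$. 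U5, on the other hand, requires $(\Lambda,\epsilon_1-\delta_1)<1-j$.

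\emph{Main obstacle.} The last comparison only gives a contradiction when $j'\le j$, where $j$ is the index in U5 and $j'$ the one in U6. The conditions $(\Lambda,\epsilon_{j+1}-\epsilon_m)=0$ and $(\Lambda,\epsilon_{j'+1}-\epsilon_m)=0$ do not by themselves pin down $j$ and $j'$. I would first check whether $j$ is implicitly meant to be canonical: namely the index $j\ge p$ with $\lambda_{j+1}=\lambda_m$ and either $j=p$ or $\lambda_j>\lambda_{j+1}$, i.e.\ the start of the final block of equal $\lambda$'s in the $\mathfrak{gl}_q$ part. This is how it will presumably be used later, for instance in \secref{sec: nec}. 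Under that reading $j=j'$, and the inequality $i-j\ge1-j$ finishes the proof.

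If the statement is meant literally with arbitrary $j$, I would try to combine the remaining relations: $\lambda_m=-\omega_1$ from $(\Lambda+\rho,\epsilon_m-\delta_1)=0$, together with the $\mathfrak{gl}_q$-dominance $\lambda_{p+1}\ge\dots\ge\lambda_m$. If these still fail to compare $j$ and $j'$, I would adopt the canonical choice of $j$ explicitly as the intended meaning of the statement.
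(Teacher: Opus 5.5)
Your Steps 1 and 2 follow the paper's own proof. The first clauses separate the pairs \textbf{(U1)}--\textbf{(U2)}, \textbf{(U3)}--\textbf{(U4)} and \textbf{(U5)}--\textbf{(U6)} from one another. The shift identity $(\Lambda+\rho,\epsilon_1-\delta_1)=(\Lambda+\rho,\epsilon_i-\delta_1)+(\lambda_1-\lambda_i)+(i-1)$ separates \textbf{(U1)} from \textbf{(U2)} and \textbf{(U3)} from \textbf{(U4)}. For \textbf{(U5)} versus \textbf{(U6)}, the paper writes $(\Lambda,\epsilon_1-\delta_1)=i-j\ge 1-j$ and declares a contradiction with \textbf{(U5)}. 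It silently uses the same $j$ in both conditions, which is exactly the step you flagged.

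Your obstacle is genuine. Using $\lambda_m=-\omega_1$ and $\mathfrak{gl}_q$-dominance cannot remove it, because read literally, \textbf{(U5)} and \textbf{(U6)} overlap as soon as $q\ge 2$. Take $p=1$, $q=2$, $n=1$ and $\Lambda=(\lambda_1,\lambda_2,\lambda_3,\omega_1)=(-1,0,0,0)\in D^+_{1,2|1}$. Then $(\Lambda+\rho,\epsilon_3-\delta_1)=\lambda_3+\omega_1=0$ and $(\Lambda,\delta_1-\delta_n)=0$.
\begin{itemize}
\item \textbf{(U5)} holds with $j=1$, since $(\Lambda,\epsilon_1-\delta_1)=\lambda_1+\omega_1=-1<0=1-j$ and $(\Lambda,\epsilon_2-\epsilon_3)=0$.
\item \textbf{(U6)} holds with $i=1$ and $j=2$, since $(\Lambda,\epsilon_1-\delta_1)=-1=i-j$ and $(\Lambda,\epsilon_3-\epsilon_3)=0$.
\end{itemize}
So your fallback is forced, not optional.

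The clean version is as follows. Let $j_0$ be the smallest $j\in\{p,\dots,m-1\}$ with $\lambda_{j+1}=\lambda_m$; this is precisely the $j$ produced in the proof of Proposition~\ref{prop: nec}. The bound $1-j$ is largest at $j_0$, and $j_0$ satisfies the block condition. Hence \textbf{(U5)} holds for some $j$ if and only if it holds for $j=j_0$. It therefore suffices to require $j=j_0$ in \textbf{(U6)} only, after which your inequality $i-j_0\ge 1-j_0$ gives the contradiction.

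This amendment does not affect Theorem~\ref{thm: main}. Suppose \textbf{(U6)} holds with $(i,\,j_0+t)$ for some $t\ge 1$, so that $\lambda_1+\omega_1=i-j_0-t$.
\begin{itemize}
\item If $t\ge i$, then \textbf{(U5)} holds at $j_0$.
\item If $t<i$, then \textbf{(U6)} holds at $j_0$ with $i-t$ in place of $i$, using $\lambda_1=\dots=\lambda_i$.
\end{itemize}
Hence the union of \textbf{(U5)} and \textbf{(U6)} is the same under either reading, and only the mutual-exclusivity claim needs the canonical $j$.
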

\begin{proof}
Throughout, we use the identities \eqref{eq: rhoform} and the dominance condition \eqref{eq: k-high}.
If $(\Lambda+ \rho, \epsilon_m-\delta_n)>0$, then for any $\mu\in \{1, \dots, n\}$,
\begin{align*}
(\Lambda+ \rho, \epsilon_m-\delta_\mu)&= (\Lambda+ \rho, \epsilon_m-\delta_n)+ (\Lambda+ \rho, \delta_n-\delta_\mu)
\\[.1cm] 
&= (\Lambda+ \rho, \epsilon_m-\delta_n)+ (\Lambda, \delta_n-\delta_{\mu})+ n- \mu>0.
\end{align*}
Consequently, \textbf{(U1)} and  \textbf{(U2)} are each disjoint from \textbf{(U3)}--\textbf{(U6)}. 

Similarly, if $(\Lambda+ \rho, \epsilon_1-\delta_1)<0$, then for any $i\in \{1, \dots, p\}$, 
\begin{align*}
(\Lambda+ \rho, \epsilon_i-\delta_1)&=(\Lambda+ \rho, \epsilon_i-\epsilon_1) + (\Lambda+ \rho, \epsilon_1-\delta_1)
\\[.1cm] 
&= (\Lambda, \epsilon_i-\epsilon_1) + (1-i) + (\Lambda+ \rho, \epsilon_1-\delta_1)<0.
\end{align*}
Therefore, \textbf{(U1)} and \textbf{(U2)} are disjoint, and \textbf{(U3)} and \textbf{(U4)} are disjoint. 

If $(\Lambda+\rho, \epsilon_m-\delta_1)=(\Lambda, \delta_1-\delta_n)= 0$, then, for any $\mu\in \{2, \dots, n\}$,
\begin{align*}
 (\Lambda+ \rho, \epsilon_m-\delta_\mu)&= (\Lambda+ \rho, \epsilon_m-\delta_1) + (\Lambda+ \rho, \delta_1-\delta_\mu)
 = (\Lambda, \delta_1-\delta_{\mu})+  1 - \mu
 \\[.1cm] 
 &=1-\mu < 0.
\end{align*}
Hence, \textbf{(U5)} and \textbf{(U6)} are each disjoint from \textbf{(U3)}--\textbf{(U4)}. 

Finally, suppose that \textbf{(U6)} is satisfied, so there exist $i\in \{1,\dots,p\}$ and $j\in\{p,\dots, m-1\}$ such that 
$(\Lambda, \epsilon_i-\epsilon_1)=(\Lambda, \epsilon_{j+1}-\epsilon_m)=0$
and $(\Lambda, \epsilon_i-\delta_1)=i-j$. Then,
$$
 (\Lambda, \epsilon_1-\delta_1)= (\Lambda, \epsilon_1-\epsilon_i)+ (\Lambda, \epsilon_i- \delta_1)= i-j \geq 1-j, 
$$
which contradicts the condition in \textbf{(U5)}. It follows that \textbf{(U5)} and \textbf{(U6)} are disjoint. 

Combining the above, the conditions \textbf{(U1)}--\textbf{(U6)} are seen to be mutually exclusive.
\end{proof}
\begin{theorem}\label{thm: main}
Let $\Lambda\in \Dpqn$. Then, the $\mathfrak{u}(p,q|n)$-module $L(\Lambda)$ is unitary if and only if 
$\Lambda$ satisfies one of the conditions \emph{\textbf{(U1)}--\textbf{(U6)}} in \lemref{lem: U}.
\end{theorem}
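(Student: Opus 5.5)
We prove necessity and sufficiency separately, following the outline in the introduction.

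\emph{Necessity.} Assume $L(\Lambda)$ is unitary. The first step is to restrict to subalgebras whose unitary representation theory is already known from \thmref{thm: type1unitary} and \thmref{thm: type2}.

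The subalgebra $\mathfrak{gl}_{q|n}\subset\mathfrak{k}$ has rows and columns indexed by $p+1,\ldots,m+n$. The star-operation \eqnref{eq: star} restricts to it as the compact type-1 operation. Hence the $\mathfrak{gl}_{q|n}$-module generated by $v_\Lambda$ is a finite-dimensional type-1 unitary module. By \thmref{thm: type1unitary}, its highest weight $(\lambda_{p+1},\ldots,\lambda_m,\omega_1,\ldots,\omega_n)$ satisfies one of two conditions: either $(\Lambda+\rho,\epsilon_m-\delta_n)>0$, or $(\Lambda+\rho,\epsilon_m-\delta_\mu)=(\Lambda,\delta_\mu-\delta_n)=0$ for some $\mu$. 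The shift of $\rho$ between $\mathfrak{gl}_{q|n}$ and $\mathfrak{gl}_{p+q|n}$ only changes $\rho$ by a multiple of the supertrace-type weight, so it does not affect these pairings.

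Symmetrically, consider the subalgebra $\mathfrak{gl}_{p|n}$ on indices $1,\ldots,p,m+1,\ldots,m+n$. There the star restricts to the type-2 operation: the odd signs are $-1$, while the even part $\mathfrak{gl}_p$ keeps $+$. This module is finite-dimensional because $\mathfrak{gl}_{p|n}$ acts locally finitely on $v_\Lambda$: it is generated by a $\mathfrak{gl}_p\oplus\mathfrak{gl}_n$ finite module together with finitely many odd operators. \thmref{thm: type2} then gives either $(\Lambda+\rho,\epsilon_1-\delta_1)<0$, or $(\Lambda+\rho,\epsilon_i-\delta_1)=(\Lambda,\epsilon_i-\epsilon_1)=0$ for some $i\le p$. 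Here again one has to check that the $\rho$'s agree on the relevant roots, using \eqnref{eq: rhoform}.

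Combining the two dichotomies gives four cases. When $\mu\ge2$ these cases are exactly \textbf{(U1)}--\textbf{(U4)}. The remaining case is $\mu=1$, that is $(\Lambda+\rho,\epsilon_m-\delta_1)=(\Lambda,\delta_1-\delta_n)=0$, so $\omega_1=\cdots=\omega_n=-\lambda_m-(m-1)$ roughly. In this case the $\mathfrak{gl}_{p|n}$ information alone is too weak, and we must extract the sharper conditions of \textbf{(U5)}/\textbf{(U6)}.

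To obtain them, take the index $j$ defined by $\lambda_{j+1}=\cdots=\lambda_m$ maximally. Then construct an explicit vector in $L_1(\Lambda)$ or $L_2(\Lambda)$ built from $E_{m+1,i}$ and $E_{j',i}$, and compute its norm. Equivalently, use the quadratic invariant of $\mathfrak{k}$ advertised in the abstract: on each $\mathfrak{k}$-type $L_0(\Lambda')\subset L_k(\Lambda)$, the Casimir of $\mathfrak{k}$ and that of $\mathfrak{gl}_{p+q|n}$ give a formula for the norm ratio. This formula is proportional to $(\Lambda'+\rho,\Lambda'+\rho)-(\Lambda+\rho,\Lambda+\rho)$, suitably combined with $\mathfrak{k}$-Casimir eigenvalues. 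Its sign must be $\ge0$, with equality forcing the vector to vanish in $L(\Lambda)$. Evaluating it on the lowest $\mathfrak{k}$-types of degree one in the direction $\epsilon_{j+1}-\epsilon_i$ and $\delta_1$-type, we expect the inequality $(\Lambda,\epsilon_1-\delta_1)<1-j$, or otherwise an equality that forces the vanishing condition $(\Lambda,\epsilon_i-\delta_1)=i-j$. These are exactly \textbf{(U5)} and \textbf{(U6)}.

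\emph{Sufficiency.} First use the shift $\Lambda\mapsto\Lambda^{(s)}$, that is tensoring with the unitary module $\mathbb{C}_s$, which preserves every condition. Next, realize $L(\Lambda)$ for suitable $\Lambda$ inside the super Howe duality: $(\mathfrak{gl}_{p+q|n},\mathfrak{gl}_k)$ acting on a Fock space of bosonic and fermionic oscillators. There the $p$ indices use dual (creation/annihilation swapped) oscillators, so the Fock space inner product is manifestly contravariant for \eqnref{eq: star}. Every irreducible summand is then unitary, and its highest weights are integral after the shift. This handles integral $\Lambda$ and the boundary points of each condition.

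For the strict-inequality families \textbf{(U1)}, \textbf{(U2)}, \textbf{(U3)}, and the interior of \textbf{(U5)}, view $\Lambda$ as a continuous parameter. The Hermitian form on each finite-dimensional $V_k(\Lambda)$ depends polynomially on that parameter. By the invariant criterion, its determinant is nonzero throughout the open region, so positivity propagates from the integral (Howe) points, or from the limit of large $|(\Lambda,\epsilon_1-\delta_1)|$, across the region. The equality cases then follow by taking quotients at the boundary.

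The main obstacle is the $\mu=1$ case. Both necessity (deriving the sharp $j$-dependent bound) and sufficiency (showing no further reducibility points occur inside the \textbf{(U5)} range) require a precise computation of the quadratic-invariant norm formula on low-degree $\mathfrak{k}$-types. I would attempt this first for $n=1$ and then reduce the general case to it via the $\mathfrak{gl}_{q|n}$ atypicality structure.
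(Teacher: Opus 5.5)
\paragraph{Necessity.} Your argument fails where you apply \thmref{thm: type2} to the $\mathfrak{gl}_{p|n}$-module generated by $v_\Lambda$ and assert that the $\rho$'s agree on the relevant roots. They agree for $\mathfrak{gl}_{q|n}$ but not for $\mathfrak{gl}_{p|n}$. Comparing with \eqref{eq: rhoform}, one has $(\rho_{p|n},\epsilon_i-\delta_\mu)=(\rho,\epsilon_i-\delta_\mu)-q$. So at $v_\Lambda$ you only get $(\Lambda+\rho,\epsilon_1-\delta_1)<q$, or $(\Lambda+\rho,\epsilon_i-\delta_1)=q$ with $(\Lambda,\epsilon_1-\epsilon_i)=0$. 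This is strictly weaker than \textbf{(U1)}--\textbf{(U4)}. For example, take $p=q=n=1$ and $\Lambda=(-\tfrac12,10,0)$: it passes both of your tests but satisfies none of \textbf{(U1)}--\textbf{(U6)}, and $w=E_{31}E_{32}v_\Lambda$ has $\langle w,w\rangle=-5\langle v_\Lambda,v_\Lambda\rangle$.

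The missing idea is to apply \thmref{thm: type2} not at $v_\Lambda$ but at $\Omega_{q|n}$, the $(\mathfrak{gl}_q\oplus\mathfrak{gl}_n)$-highest-weight vector of the bottom graded component of the $\mathfrak{gl}_{q|n}$-module $L_0(\Lambda)$.
\begin{itemize}
\item Since $\Omega_{q|n}\in L_0(\Lambda)$, it is annihilated by $\mathfrak{k}_+\ni E_{i,m+\mu}$, so it is a $(\mathfrak{gl}_{p|n}\oplus\mathfrak{gl}_q)$-highest-weight vector.
\item Its weight satisfies $(\Lambda_{q|n},\epsilon_i-\delta_1)=(\Lambda,\epsilon_i-\delta_1)+\#\{k>p\,|\,\mu_k\ge1\}$ for $i\le p$.
\item When $L_0(\Lambda)$ is typical or $\mu\ge2$, this count is $q$. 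That converts $\rho_{p|n}$ into $\rho$ and gives exactly \textbf{(U1)}--\textbf{(U4)}.
\item When $\mu=1$, the count is $j-p$, which gives \textbf{(U5)}/\textbf{(U6)} at once.
\end{itemize}
So the case you call the main obstacle is settled by the same vector. Your proposed norm computation on low-degree $\mathfrak{k}$-types is only stated as an expectation and does not constitute an argument.

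\paragraph{Sufficiency.} The deformation scheme does not stand as written, for two reasons.

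First, you claim that ``by the invariant criterion'' the form on $V_k(\Lambda)$ has nonzero determinant throughout the region. That requires proving $2(\Lambda+\rho,\theta)-(\theta,\theta)<0$ for every $\theta$ as in \eqref{eq: theta}. This estimate is the real content; the paper proves it for \textbf{(U1)} and \textbf{(U3)} in \propref{prop: type1suff} and \propref{peop: type2suff}. Once you have it, \propref{prop: nscon} gives unitarity directly and no continuity argument is needed.

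Second, \textbf{(U2)}, \textbf{(U3)} and \textbf{(U5)} are not strict-inequality regions: each imposes equalities. Modulo the central twist, the only continuous parameters in $\Dpqn$ are $(\Lambda,\epsilon_1-\delta_1)$ and $(\Lambda,\epsilon_m-\delta_n)$.
\begin{itemize}
\item On \textbf{(U2)}, $\Lambda_{q|n}$ is $\mathfrak{gl}_{p|n}$-atypical. Hence the Kac module generated by $\Omega_{q|n}$ inside $V(\Lambda)$ (where $\mathfrak{k}_-$ acts freely) is not simple.
\item A nondegenerate form on $V(\Lambda)$ would therefore contradict unitarity. So the form is degenerate at every point you want to reach, and no determinant can propagate positivity along this family.
\item For $i\ge2$, these points have $(\Lambda+\rho,\epsilon_1-\delta_1)=i-1>0$, so they are not limits of \textbf{(U1)} either.
\end{itemize}

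The paper handles \textbf{(U2)} and \textbf{(U5)} differently. It writes $\Lambda$, up to a central twist, as an integral weight shifted by $s\in[0,1]$ on the $q$-block (respectively the $p$-block) only. The eigenvalue $\gamma_s$ of $\Gamma$ on each $\mathfrak{k}$-type is then affine in $s$. It is $\le0$ at $s=0,1$ by the Howe-duality result \thmref{thm: intuni}, hence $\le0$ on all of $[0,1]$, and \propref{prop: nscon} applies; this is \lemref{lem: intper}.
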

Sections \ref{sec: nec}--\ref{sec: suff} are devoted to the proof of Theorem \ref{thm: main}. In the course of the proof, for each of 
the conditions \textbf{(U1)}--\textbf{(U6)}, we will establish the existence of a highest weight $\Lambda$ satisfying the condition.

Theorem \ref{thm: main} also recovers the classical criterion from \cite{EHW83} for the unitarity of the 
$\mathfrak{u}(p,q)$-module $L(\Lambda)$ with $\Lambda\in D_{p,q}^+$, 
where $D_{p,q}^+=D_{p,q|0}^+$ denotes the set of real dominant integral weights of $\mathfrak{gl}_p\oplus\mathfrak{gl}_q$.
Specifically, this module is unitary if and only if there exist $i\in\{1,\ldots,p\}$ and $j\in\{1,\ldots,q\}$ such that 
$$
 (\Lambda, \epsilon_1-\epsilon_i)=(\Lambda, \epsilon_{m-j+1}-\epsilon_{m})=0,
$$
and such that one of the following two conditions holds: 
\begin{enumerate}
\vspace{0.1cm}
\item[\textbf{(C1)}] $\lambda_{m}-\lambda_1= m-j-i$.
\vspace{0.2cm}
\item[\textbf{(C2)}] $\lambda_{m}-\lambda_1\in\mathbb{R}$ and $\lambda_{m}-\lambda_1>\mathrm{min}\{m-i, m-j\}-1$.
\vspace{0.1cm}
\end{enumerate}
By eliminating the odd coordinates, condition \textbf{(C1)} is obtained from \textbf{(U6)}, 
while \textbf{(C2)} follows from a combination of \textbf{(U2)} and \textbf{(U5)}.

\section{Necessity}
\label{sec: nec}

In this section, we show that one of the conditions \textbf{(U1)}--\textbf{(U6)} \textit{necessarily} holds 
if the $\mathfrak{u}(p,q|n)$-module $L(\Lambda)$ is unitary. 
Our arguments are based on the unitarity conditions for the finite-dimensional simple modules over the 
Lie superalgebras $\mathfrak{gl}_{p|n}$ and $\mathfrak{gl}_{q|n}$.
We recall the notation $m=p+q$.

Clearly, $\mathfrak{gl}_{p|n}$ embeds into $\mathfrak{gl}_{m|n}$, having simple root system 
$$
 \Delta_{p|n}= \{\epsilon_i-\epsilon_{i+1}, \epsilon_p-\delta_1, \delta_{\mu}-\delta_{\mu+1}\,|\, 
 i=1,\ldots,p-1;\, \mu=1,\ldots,n-1 \}
$$
and graded half-sum of positive roots
$$
 \rho_{p|n}= \frac{1}{2} \sum_{i=1}^p (p-n-2i+1)\epsilon_i+ \frac{1}{2}\sum_{\mu=1}^n (p+n-2\mu+1)\delta_{\mu}.
$$
Similarly, $\mathfrak{gl}_{q|n}$ has simple root system 
$$
 \Delta_{q|n}= \{\epsilon_i-\epsilon_{i+1}, \epsilon_m-\delta_1, \delta_{\mu}-\delta_{\mu+1}\,|\, 
 i=p+1,\ldots,m-1;\, \mu=1,\ldots,n-1\}
$$
and graded half-sum of positive roots
$$
 \rho_{q|n}= \frac{1}{2}\sum_{i=p+1}^m (q-n-2(i-p)+1) \epsilon_i+\frac{1}{2}\sum_{\mu=1}^{n}(q+n-2\mu+1)\delta_{\mu}.
$$
\begin{lemma}\label{lem: kmodL}
Let $\Lambda\in \Dpqn$. Then, the $\mathfrak{k}$-module $L_0(\Lambda)$ is type-1 unitary if and only if
one of the following conditions holds:
\begin{enumerate}
\item[{\rm (1)}] $(\Lambda+ \rho, \epsilon_m-\delta_n)>0$.
\vspace{0.15cm}
\item[{\rm (2)}] There exists $\mu\in \{1,\dots, n\}$ such that $(\Lambda+\rho, \epsilon_m-\delta_\mu )=(\Lambda, \delta_\mu-\delta_n)=0$.
\end{enumerate}
\end{lemma}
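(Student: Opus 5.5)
The plan is to reduce the statement to Theorem~\ref{thm: type1unitary} applied to $\mathfrak{gl}_{q|n}$. Since $\mathfrak{k}\cong\mathfrak{gl}_p\oplus\mathfrak{gl}_{q|n}$, the simple $\mathfrak{k}$-module decomposes as $L_0(\Lambda)\cong L^{(p)}(\Lambda')\otimes L^{(q|n)}(\Lambda'')$. Here $\Lambda'=(\lambda_1,\ldots,\lambda_p)$ is a real dominant integral $\mathfrak{gl}_p$-weight and $\Lambda''=(\lambda_{p+1},\ldots,\lambda_m,\omega_1,\ldots,\omega_n)\in D^+_{q|n}$, both by \eqref{eq: k-high}. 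The restriction of the star-operation \eqref{eq: star} to $\mathfrak{k}$ is $E_{ab}\mapsto E_{ba}$, since all indices lie in $\{1,\ldots,p\}$ or in $\{p+1,\ldots,m+n\}$. This is the type-1 (compact) star on each factor.

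First, the $\mathfrak{gl}_p$ factor is always unitary, by the closing remarks of \S\ref{subsec: findim}. Second, a tensor product of two such modules is unitary if and only if each factor is. Sufficiency holds because the product form is positive-definite. For necessity, restrict the form to $v'\otimes L^{(q|n)}(\Lambda'')$ for a fixed highest-weight vector $v'$. By simplicity and uniqueness of contravariant forms, the form on $L_0(\Lambda)$ factors as a product of the two factor forms, so positivity on this subspace forces positivity of the $\mathfrak{gl}_{q|n}$ form. Hence $L_0(\Lambda)$ is type-1 unitary if and only if $L^{(q|n)}(\Lambda'')$ is type-1 unitary as a $\mathfrak{gl}_{q|n}$-module.

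Next, apply Theorem~\ref{thm: type1unitary} with $\rho_{q|n}$. The conditions are $(\Lambda''+\rho_{q|n},\epsilon_m-\delta_n)>0$, or the existence of $\mu$ with $(\Lambda''+\rho_{q|n},\epsilon_m-\delta_\mu)=(\Lambda'',\delta_\mu-\delta_n)=0$. It remains to check that $(\rho_{q|n},\epsilon_m-\delta_\mu)=(\rho,\epsilon_m-\delta_\mu)$, and this is the step I expect to need the most care. From the formula for $\rho_{q|n}$ we get $(\rho_{q|n},\epsilon_m)=\tfrac12(q-n-2q+1)=\tfrac12(1-q-n)$ and $(\rho_{q|n},\delta_\mu)=-\tfrac12(q+n-2\mu+1)$. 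Their difference is $1-\mu$, which matches $(\rho,\epsilon_m-\delta_\mu)=m+1-m-\mu=1-\mu$ from \eqref{eq: rhoform}. Finally, the pairings of $\Lambda$ and $\Lambda''$ with $\epsilon_m-\delta_\mu$ and $\delta_\mu-\delta_n$ agree, since these roots involve only $\mathfrak{gl}_{q|n}$ coordinates. This yields exactly conditions (1) and (2).
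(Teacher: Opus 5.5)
Your proposal is correct and follows the paper's proof. The paper likewise splits $L_0(\Lambda)\cong L^{p}(\Lambda')\otimes L^{q|n}(\Lambda'')$, applies Theorem~\ref{thm: type1unitary} to the $\mathfrak{gl}_{q|n}$ factor, and uses $(\Lambda+\rho,\epsilon_m-\delta_\mu)=(\Lambda''+\rho_{q|n},\epsilon_m-\delta_\mu)$; you additionally justify the tensor-factor reduction and check this $\rho$-identity explicitly (your value $1-\mu$ is right), both of which the paper leaves implicit.
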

\begin{proof}
$\Lambda$ decomposes into $\mathfrak{gl}_p$- and $\mathfrak{gl}_{q|n}$-weights, as $\Lambda=(\Lambda', \Lambda'')$.
If $L_{0}(\Lambda)$ is a simple $\mathfrak{k}$-module, then $L_0(\Lambda)\cong L^{p}(\Lambda') \otimes L^{q|n}(\Lambda'')$, 
where $L^{p}(\Lambda')$ and $L^{q|n}(\Lambda'')$ are $\mathfrak{gl}_p$- and $\mathfrak{gl}_{q|n}$-modules with highest weights 
$\Lambda'$ and $\Lambda''$. As per the remarks at the end of 
Section \ref{subsec: findim}, $L^p(\Lambda')$ is unitary if and only if $\Lambda'$ is real and $\mathfrak{gl}_p$-dominant integral. 
\thmref{thm: type1unitary} implies that $L(\Lambda'')$ is type-1 unitary if and only if either
$(\Lambda''+ \rho_{q|n}, \epsilon_m-\delta_n)>0$ or there exists $\mu\in \{1,\dots, n\}$ 
such that $(\Lambda''+\rho_{q|n}, \epsilon_m-\delta_\mu)=(\Lambda'', \delta_\mu-\delta_n)=0$.
As
$$
 (\Lambda+\rho, \epsilon_m-\delta_{\mu})= (\Lambda''+\rho_{q|n}, \epsilon_m-\delta_\mu), \qquad 
 \mu=1,\ldots,n,
$$ 
the result follows. 
\end{proof}

Let $v_{\Lambda}$ be a highest-weight vector of $L(\Lambda)$. Following the construction in \eqref{eq: Omega}, 
in accordance with the embedding 
$\mathfrak{gl}_{q|n}\subset \mathfrak{gl}_{p+q|n}$, we define the $(\mathfrak{gl}_q\oplus \mathfrak{gl}_n)$-highest-weight vector
$$
 \Omega_{q|n}=\Big(\prod_{j=p+1}^{m} \prod_{\nu=1}^{\mu_j}E_{m+\nu,j}\Big)v_{\Lambda}.
$$
Clearly, $\Omega_{q|n}$ is a $\mathfrak{gl}_p$-highest-weight vector, and as
$E_{i,m+\mu}\Omega_{q|n}=0$ for all $i\in\{1,\ldots,p\}$ and all $\mu\in\{1,\ldots,n\}$,
$\Omega_{q|n}$ is also a $(\mathfrak{gl}_{p|n}\oplus \mathfrak{gl}_q)$-highest-weight vector.
If $L(\Lambda)$ is unitary, then every $\mathfrak{gl}_{p|n}$-submodule of $L(\Lambda)$ is type-2 unitary.
\begin{proposition}\label{prop: nec}
Let $\Lambda\in D^{+}_{p+q|n}$, and suppose $L(\Lambda)$ is unitary.
Then, $\Lambda$ satisfies one of the conditions \emph{\textbf{(U1)}--\textbf{(U6)}}.
\end{proposition}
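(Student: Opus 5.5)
Assume $L(\Lambda)$ is unitary. We restrict to two finite-dimensional subsuperalgebras and apply the Gould--Zhang classifications.

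\emph{Step 1: the $\mathfrak{gl}_{q|n}$ side.} The star-operation \eqref{eq: star} restricted to $\mathfrak{gl}_{q|n}\subset\mathfrak{k}$ is the compact type-1 one. The top component $L_0(\Lambda)$ is finite-dimensional and the form on it is positive-definite, so $L_0(\Lambda)$ is a type-1 unitary $\mathfrak{k}$-module. \lemref{lem: kmodL} then gives two cases:
\begin{itemize}
\item[(A)] $(\Lambda+\rho,\epsilon_m-\delta_n)>0$;
\item[(B)] there is $\mu\in\{1,\dots,n\}$ with $(\Lambda+\rho,\epsilon_m-\delta_\mu)=(\Lambda,\delta_\mu-\delta_n)=0$.
\end{itemize}
We split (B) further into (B1) $\mu\ge2$ and (B2) $\mu=1$.

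\emph{Step 2: the $\mathfrak{gl}_{p|n}$ side.} On $\mathfrak{gl}_{p|n}$ the star-operation is the dual (type-2) one: odd generators acquire a sign. Hence every $\mathfrak{gl}_{p|n}$-submodule of $L(\Lambda)$ generated by a $\mathfrak{gl}_{p|n}$-highest-weight vector is type-2 unitary.

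In cases (A) and (B1), apply this to $v_\Lambda$ itself, which is $\mathfrak{gl}_{p|n}$-highest. The corresponding submodule is finite-dimensional, since $\mathfrak{gl}_p\oplus\mathfrak{gl}_n\subset\mathfrak{k}$ acts locally finitely and odd generators square to zero. \thmref{thm: type2}, applied to the restriction $\Lambda|_{p|n}$, gives either $(\Lambda+\rho_{p|n},\epsilon_1-\delta_1)<0$ or the existence of $i\le p$ with the atypicality condition. Using
\[
(\Lambda+\rho_{p|n},\epsilon_i-\delta_1)=(\Lambda+\rho,\epsilon_i-\delta_1),
\]
which one checks directly from \eqref{eq: rhoform} since both equal $(\Lambda,\epsilon_i-\delta_1)+1-i$, we get the following:
\begin{itemize}
\item case (A) yields \textbf{(U1)} or \textbf{(U2)};
\item case (B1) yields \textbf{(U3)} or \textbf{(U4)}.
\end{itemize}

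\emph{Step 3: the hard case (B2).} Here $v_\Lambda$ alone is not enough, because the condition must also involve the index $j$. Instead use the vector $\Omega_{q|n}$ from \eqref{eq: Omega}, which is $(\mathfrak{gl}_{p|n}\oplus\mathfrak{gl}_q)$-highest. With $\mu=1$ we have $\mu_m=0$ and
\[
\mu_j=\min\{n,(\Lambda,\epsilon_j-\epsilon_m)\}.
\]
Let $j\in\{p,\dots,m-1\}$ be the largest index with $\lambda_{j+1}=\lambda_m$ (taking $j=p$ if all are equal). Computing the weight $\bar\Lambda$ of $\Omega_{q|n}$ shows that its $\delta_1$-coordinate is shifted by the number of $j'>p$ with $\mu_{j'}\ge1$, namely $j-p$.

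Now apply \thmref{thm: type2} to $\bar\Lambda|_{p|n}$, together with $\rho_{p|n}$:
\begin{itemize}
\item the typical condition becomes $(\Lambda,\epsilon_1-\delta_1)+(j-p)+1-p<\dots$; after simplification this should read $(\Lambda,\epsilon_1-\delta_1)<1-j$, which is \textbf{(U5)};
\item the atypical condition becomes $(\Lambda,\epsilon_i-\delta_1)=i-j$ with $(\Lambda,\epsilon_i-\epsilon_1)=0$, which is \textbf{(U6)}.
\end{itemize}

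The main obstacle is this bookkeeping. One must verify that $\Omega_{q|n}\neq0$ in $L(\Lambda)$; this holds because it is nonzero in the finite-dimensional type-1 unitary $\mathfrak{gl}_{q|n}$-module generated by $v_\Lambda$. One must also compute the shifts exactly, paying attention to the dependence on $n$ through $\min\{n,\cdot\}$, so that the resulting inequalities match \textbf{(U5)} and \textbf{(U6)} precisely rather than with an off-by-one error. If a single $j$ does not suffice, I would instead choose $j$ adapted to the first $\delta$-index at which $\mu_{j'}$ saturates, and test the candidate choices against the claimed form.
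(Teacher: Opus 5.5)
\textbf{There is a genuine gap in Step 2.} The identity $(\Lambda+\rho_{p|n},\epsilon_i-\delta_1)=(\Lambda+\rho,\epsilon_i-\delta_1)$ is false. By \eqref{eq: rhoform}, $(\rho,\epsilon_i-\delta_1)=m-i$, while the analogous formula for $\mathfrak{gl}_{p|n}$ gives $(\rho_{p|n},\epsilon_i-\delta_1)=p-i$; neither equals $1-i$.

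So testing the $\mathfrak{gl}_{p|n}$-module generated by $v_\Lambda$ only gives one of two things: $(\Lambda,\epsilon_1-\delta_1)<1-p$, or $(\Lambda,\epsilon_i-\delta_1)=i-p$ with $(\Lambda,\epsilon_i-\epsilon_1)=0$. This is strictly weaker than \textbf{(U1)}--\textbf{(U4)}. Take $p=q=n=1$ with $\lambda_1+\omega_1=-\tfrac12$ and $\lambda_2+\omega_1=1$. Then case (A) holds and $v_\Lambda$ passes your test. Yet none of \textbf{(U1)}--\textbf{(U6)} holds, since $(\Lambda+\rho,\epsilon_1-\delta_1)=\tfrac12$. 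Indeed, $E_{31}E_{32}v_\Lambda$ (index $3$ odd) has norm $-(\lambda_1+\omega_1+1)(\lambda_2+\omega_1)\langle v_\Lambda,v_\Lambda\rangle<0$. The missing information comes only from first descending through the $\mathfrak{gl}_{q|n}$-module generated by $v_\Lambda$.

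\textbf{The fix} is to use $\Omega_{q|n}$ in cases (A) and (B1) as well, as the paper does. In those cases $\mu_{j'}\ge\mu_m=\mu-1\ge 1$ for every $j'\in\{p+1,\dots,m\}$, where $\mu=n+1$ in case (A). Hence the $\mathfrak{gl}_{p|n}$-weight of $\Omega_{q|n}$ keeps the $\epsilon_i$-entries of $\Lambda$ for $i\le p$ and has $\delta_1$-entry $\omega_1+q$. This shift by $q$ exactly compensates $(\rho,\epsilon_i-\delta_1)-(\rho_{p|n},\epsilon_i-\delta_1)=q$. \thmref{thm: type2} then yields \textbf{(U1)}/\textbf{(U2)} in case (A) and \textbf{(U3)}/\textbf{(U4)} in case (B1).

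\textbf{Step 3} has the same structure as the paper's argument for case (B2), but two corrections are needed. First, $j$ must be the \emph{smallest} index in $\{p,\dots,m-1\}$ with $\lambda_{j+1}=\lambda_m$; as written, the largest such index is always $m-1$. Second, you need the correct value $(\rho_{p|n},\epsilon_i-\delta_1)=p-i$. With it, the $\delta_1$-shift $j-p$ gives $(\Lambda,\epsilon_i-\delta_1)+j-i$, and \thmref{thm: type2} yields exactly \textbf{(U5)} or \textbf{(U6)}. With your value $1-p$, the inequalities you wrote down do not follow from your own formula, so they were asserted rather than derived.
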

\begin{proof}
If $L(\Lambda)$ is unitary, then the $\mathfrak{k}$-submodule $L_0(\Lambda)$ is type-1 unitary. Consequently, 
the weight $\Lambda$ satisfies one of the conditions in \lemref{lem: kmodL}. If condition (1) in that lemma holds, or condition (2) 
holds with $\mu>1$, then $\Omega_{q|n}$ is a $(\mathfrak{gl}_{p|n}\oplus \mathfrak{gl}_q)$-highest-weight vector of weight
$$
 \Lambda_{q|n}= \Lambda- \sum_{i>p}^{m}\sum_{\nu=1}^{\mu_i} (\epsilon_i-\delta_{\nu}),
$$ 
where $\mu_i\geq \mu$. The highest weight $\Lambda_{q|n}$ remains dominant integral for 
$\mathfrak{gl}_p\oplus \mathfrak{gl}_q\oplus\mathfrak{gl}_n$. 
Since $L(\Lambda)$ is unitary and hence semisimple as a $(\mathfrak{gl}_{p|n}\oplus \mathfrak{gl}_q)$-module, it contains 
a finite-dimensional type-2 unitary simple $(\mathfrak{gl}_{p|n}\oplus \mathfrak{gl}_q)$-submodule generated by $\Omega_{q|n}$. 
By \thmref{thm: type2}, we have either
$$
 (\Lambda_{q|n}+\rho_{p|n}, \epsilon_1-\delta_1)
 = (\Lambda, \epsilon_1-\delta_1)+ m-1
 = (\Lambda+\rho, \epsilon_1-\delta_1)<0,
$$
in which case $\Lambda$ satisfies \textbf{(U1)} or \textbf{(U3)}, or there exists $i\in\{1,\ldots,p\}$ such that 
\begin{align*}
 (\Lambda_{q|n}+\rho_{p|n}, \epsilon_i-\delta_1)
 = (\Lambda, \epsilon_i-\delta_i) + m-i 
 = (\Lambda+\rho, \epsilon_i-\delta_1)
 &=0,
 \\[.1cm]
 (\Lambda_{q|n}, \epsilon_1-\epsilon_i)
 = (\Lambda, \epsilon_1-\epsilon_i)
 &=0,
\end{align*}
in which case $\Lambda$ satisfies \textbf{(U2)} or \textbf{(U4)}.

If condition (2) in \lemref{lem: kmodL} holds with $\mu=1$, then there exists $j\in \{p, \dots ,m-1\}$ such that 
$$
 n\geq \mu_{p+1}\geq \dots \geq \mu_j>\mu_{j+1}= \cdots =\mu_m=\mu-1=0.
$$
It follows that $(\Lambda, \epsilon_{j+1}-\epsilon_m)=0$ 
and that $\Omega_{q|n}$ is a $(\mathfrak{gl}_{p|n}\oplus \mathfrak{gl}_q)$-highest-weight vector of weight 
$$
 \Lambda_{q|n}= \Lambda- \sum_{i>p}^{j}\sum_{\nu=1}^{\mu_i} (\epsilon_i-\delta_{\nu}).
$$
Again using \thmref{thm: type2}, we have either
$$
 (\Lambda_{q|n}+ \rho_{p|n}, \epsilon_1-\delta_1)= (\Lambda, \epsilon_1-\delta_1)+j-1<0,
$$
in which case $\Lambda$ satisfies \textbf{(U5)},
or there exists $i\in \{1, \dots, p\}$ such that 
\begin{align*}
 (\Lambda_{q|n}+\rho_{p|n}, \epsilon_i-\delta_1)= (\Lambda, \epsilon_i-\delta_1)+ j-i=0,
 \qquad
 (\Lambda_{q|n}, \epsilon_1-\epsilon_i)= (\Lambda, \epsilon_1-\epsilon_i)=0,
\end{align*}
in which case $\Lambda$ satisfies \textbf{(U6)}.
\end{proof}

\section{A unitarity criterion}
\label{sec: inv}

Here, we introduce a quadratic invariant of the subalgebra $\mathfrak{k}$
that acts by scalar multiplication on each simple $\mathfrak{k}$-submodule of $L(\Lambda)$ for
$L(\Lambda)$ unitary. This yields a useful criterion for the unitarity of $L(\Lambda)$
that we will use later to prove the sufficiency of the conditions \textbf{(U1)}--\textbf{(U6)}.
We let $\Pi_{\mathfrak{k}}(\Lambda)$ denote the set of all $\mathfrak{k}$-highest weights of $L(\Lambda)$, and
recall the notation $m=p+q$.

We denote by $\rho_{\mathfrak{k}}$ the graded half-sum of positive roots of $\mathfrak{k}$, and note that
$$
 \rho=\rho_{\mathfrak{k}}+\frac{1}{2}\Big(\sum_{i=1}^p\sum_{j=p+1}^m(\epsilon_i-\epsilon_j)
 -\sum_{i=1}^p\sum_{\mu=1}^n(\epsilon_i-\delta_\mu)\Big).
$$
We also note that the Casimir element of $\mathfrak{k}$ is given by
$$
 C_{\mathfrak{k}}=\sum_{i,j=1}^p E_{ij}E_{ji}+\sum_{a,b=p+1}^{m+n} (-1)^{[b]}E_{ab}E_{ba}
$$
and set
$$
 \Gamma:= \sum_{i=1}^{p}\sum_{a=p+1}^{m+n} E_{ai} E_{ia}.
$$
By construction, $\Gamma\in {\rm U}(\mathfrak{g})$, and it is straightforward to verify that $\Gamma$ is $\mathfrak{k}$-invariant
in the sense that $[\Gamma,X]=0$ for all $X\in \mathfrak{k}$. 

Recall that $V(\Lambda)={\rm U}(\mathfrak{k}_{-})\otimes L_{0}(\Lambda)$, where $L_{0}(\Lambda)$ is a simple 
$\mathfrak{k}$-module and ${\rm U}(\mathfrak{k}_{-})$ is isomorphic to the supersymmetric algebra 
$S((\mathbb{C}^{p})^\ast\otimes \mathbb{C}^{q|n})$ as a $\mathfrak{k}$-module. 
Note that $S((\mathbb{C}^{p})^\ast\otimes \mathbb{C}^{q|n})$ is a type-1 unitary $\mathfrak{k}$-module 
and hence $\mathfrak{k}$-semisimple (cf. \cite{CLZ04} and \cite[Theorem 2.1]{Ser01}), 
while $V(\Lambda)$ need not be $\mathfrak{k}$-semisimple. 
\begin{lemma}\label{lem: kinv}
Let $v_{\xi}$ be a $\mathfrak{k}$-highest-weight vector in $V(\Lambda)$ of weight $\xi$. 
Then, $\Gamma$ acts on $v_{\xi}$ as multiplication by the scalar 
$$
 \gamma=\frac{1}{2}(\Lambda-\xi, \Lambda+\xi+2\rho).
$$
\end{lemma}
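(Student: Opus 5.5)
The plan is to express $\Gamma$ through the quadratic Casimir element of $\mathfrak{g}=\mathfrak{gl}_{m|n}$, the Casimir $C_{\mathfrak{k}}$ of $\mathfrak{k}$, and a Cartan correction term. Both Casimirs act by computable scalars on $v_\xi$, so $\Gamma$ will too.

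First, take the Casimir $C=\sum_{a,b=1}^{m+n}(-1)^{[b]}E_{ab}E_{ba}$ of $\mathfrak{gl}_{m|n}$. It is central in ${\rm U}(\mathfrak{g})$, and since $V(\Lambda)$ is generated by the highest-weight vector $v_\Lambda$, $C$ acts on all of $V(\Lambda)$ by the scalar $(\Lambda,\Lambda+2\rho)$. This is the standard computation: write $C$ in normal order, let the raising operators kill $v_\Lambda$, and collect the Cartan terms into $2\rho$.

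Second, split the double sum in $C$ according to whether the indices $a,b$ lie in the same block or in different blocks, where the blocks are $\{1,\ldots,p\}$ and $\{p+1,\ldots,m+n\}$.
\begin{itemize}
\item[(i)] The same-block terms give exactly $C_{\mathfrak{k}}$.
\item[(ii)] The cross terms with $b=i\le p<a$ give $\sum_{i,a}E_{ai}E_{ia}=\Gamma$, since $[i]=\bar 0$.
\item[(iii)] The cross terms $(-1)^{[a]}E_{ia}E_{ai}$ are reordered using the super-commutator $E_{ia}E_{ai}-(-1)^{[a]}E_{ai}E_{ia}=E_{ii}-(-1)^{[a]}\cdots$ to give $E_{ai}E_{ia}$ plus an element $H$ of the Cartan subalgebra.
\end{itemize}
The result is an identity $C=C_{\mathfrak{k}}+2\Gamma+H$, where $H=\sum_{i\le p<a}(\pm E_{ii}\pm E_{aa})$ is an explicit linear combination of the $E_{cc}$.

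Third, evaluate on $v_\xi$. Because $v_\xi$ is a $\mathfrak{k}$-highest-weight vector and $L_0$-type arguments apply verbatim inside $V(\Lambda)$, $C_{\mathfrak{k}}$ acts on it by $(\xi,\xi+2\rho_{\mathfrak{k}})$. Similarly, $H$ acts by $\xi(H)$, which I expect to equal $2(\xi,\rho-\rho_{\mathfrak{k}})$ by comparison with the displayed formula for $\rho-\rho_{\mathfrak{k}}$ above. This gives
\[
2\gamma=(\Lambda,\Lambda+2\rho)-(\xi,\xi+2\rho_{\mathfrak{k}})-2(\xi,\rho-\rho_{\mathfrak{k}})=(\Lambda,\Lambda+2\rho)-(\xi,\xi+2\rho),
\]
which factors as $(\Lambda-\xi,\Lambda+\xi+2\rho)$.

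The main obstacle is the sign bookkeeping in $H$, namely verifying that $\xi(H)=2(\xi,\rho-\rho_{\mathfrak{k}})$ exactly, with the odd indices $a>m$ contributing with the graded sign. As a consistency check I would use $\xi=\Lambda$. There $\Gamma v_\Lambda=0$, since each $E_{ia}$ with $i\le p<a$ lies in $\mathfrak{k}_+$ and so kills $v_\Lambda$, and this must agree with the formula giving $\gamma=0$. I would also check the formula directly on a degree-one vector $E_{ai}v_\Lambda$ to fix any stray sign.
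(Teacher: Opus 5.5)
Your proposal is correct and follows the paper's proof essentially verbatim. The paper also writes $C=C_{\mathfrak{k}}+2\Gamma+\sum_{i\le p<a}(-1)^{[a]}[E_{ia},E_{ai}]$ and evaluates the Cartan correction on $v_\xi$ as $\sum_{i\le p<a}(-1)^{[a]}(\xi,\epsilon_i-\epsilon_a)=2(\xi,\rho-\rho_{\mathfrak{k}})$, which confirms the sign bookkeeping you flagged; your point that $C$ acts by $(\Lambda,\Lambda+2\rho)$ on all of $V(\Lambda)$, since $V(\Lambda)$ is cyclic on $v_\Lambda$, is the right justification here, because $v_\xi$ lives in $V(\Lambda)$ rather than in $L(\Lambda)$.
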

\begin{proof}
The Casimir element of $\mathfrak{gl}_{m|n}$ is given by
\begin{align*}
 C
 &= \sum_{a,b=1}^{m+n} (-1)^{[b]} E_{ab}E_{ba}
 =C_{\mathfrak{k}}+ \sum_{i=1}^{p}\sum_{a=p+1}^{m+n} (E_{ai}E_{ia} + (-1)^{[a]} E_{ia}E_{ai})
 \\
 &= C_{\mathfrak{k}} + 2\Gamma + \sum_{i=1}^{p}\sum_{a=p+1}^{m+n}(-1)^{[a]} [E_{ia}, E_{ai}],
\end{align*}
and acts on $L(\Lambda)$ as multiplication by the scalar $(\Lambda, \Lambda+2\rho)$, 
while $C_{\mathfrak{k}}$ acts on $v_{\xi}$ as multiplication by the scalar $(\xi, \xi+2\rho_{\mathfrak{k}})$.
As
$$
 [E_{ia}, E_{ai}] v_{\xi}= (E_{ii}- (-1)^{[a]} E_{aa})v_{\xi}= (\xi, \epsilon_i-\epsilon_a) v_{\xi}
$$
for all $i\in\{1,\ldots,p\}$ and all $a\in\{p+1,\ldots,m+n\}$, $\Gamma$ acts on $v_{\xi}$ as multiplication by
\begin{align*}
 \gamma 
 &=\frac{1}{2}\Big((\Lambda, \Lambda+2\rho)-(\xi, \xi+2\rho_{\mathfrak{k}})
 - \sum_{i=1}^{p}\sum_{a=p+1}^{m+n}(-1)^{[a]}(\xi, \epsilon_i-\epsilon_a)\Big)
 \\[.1cm]
 &= \frac{1}{2}( (\Lambda, \Lambda+2\rho)-(\xi, \xi+2\rho))
 = \frac{1}{2}(\Lambda-\xi, \Lambda+\xi+2\rho).
\end{align*} 
\end{proof}

To construct a unitary structure on $V(\Lambda)$, we use \eqref{eq: VLZ} and start with a unitary $\mathfrak{k}$-module $V_0(\Lambda)$
equipped with a fixed positive-definite contravariant Hermitian form $\langle-,-\rangle$. This form can then be extended to a 
contravariant Hermitian form on $V(\Lambda)$ such that \eqref{eq: Hermitian} is satisfied, with $V_k(\Lambda)$ and $V_{\ell}(\Lambda)$ 
orthogonal for $k\neq \ell$; cf. \cite[Lemma 1]{GZ90a}. 

The following result appears in \cite[Lemma 3.1]{GL96} in the context of quantum supergroups. 
\begin{lemma}\label{lem: ortho}
Let $V$ be a $\mathfrak{k}$-module equipped with a contravariant Hermitian form $\langle-, -\rangle$. 
\begin{enumerate}
\item[{\rm (1)}] If $v_{\mu}$ and $v_{\nu}$ are weight vectors of $V$ of weights $\mu\neq\nu$, then $\langle v_{\mu}, v_{\nu}\rangle=0$. 
\vspace{0.15cm}
\item[{\rm (2)}] If $L(\mu)$ and $L(\nu)$ are simple submodules of $V$ of highest weights $\mu\neq\nu$, 
then $\langle L (\mu), L(\nu) \rangle=0$.
\end{enumerate}
\end{lemma}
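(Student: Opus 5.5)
For part (1), we use that the Cartan elements $E_{aa}$ lie in $\mathfrak{k}$ and satisfy $E_{aa}^{\medstar}=E_{aa}$ by \eqref{eq: star}. Contravariance then gives, for every $a$,
$$
 \overline{\mu(E_{aa})}\,\langle v_\mu,v_\nu\rangle=\langle E_{aa}v_\mu,v_\nu\rangle=\langle v_\mu,E_{aa}v_\nu\rangle=\nu(E_{aa})\,\langle v_\mu,v_\nu\rangle .
$$
So if $\langle v_\mu,v_\nu\rangle\neq0$, then $\nu(E_{aa})=\overline{\mu(E_{aa})}$ for all $a$. To reach $\mu=\nu$ we also need $\mu$ to be real. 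In our setting the weights of $V(\Lambda)$ are of the form $\Lambda$ minus sums of roots, with $\Lambda\in\Dpqn$ real, so all weights are real. (Alternatively, if one prefers not to rely on this, apply the same identity with $v_\nu=v_\mu$ to any weight vector having nonzero norm.) We therefore state and use part (1) under the standing assumption that weights are real, as is automatic for $V(\Lambda)$. Under that assumption, $\mu\neq\nu$ forces $\langle v_\mu,v_\nu\rangle=0$.

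For part (2), let $v_\mu\in L(\mu)$ and $v_\nu\in L(\nu)$ be $\mathfrak{k}$-highest-weight vectors. Since $L(\mu)$ is simple, it is spanned by vectors $Xv_\mu$ with $X$ a product of lowering operators $E_{ba}$, $b>a$, of $\mathfrak{k}$; likewise for $L(\nu)$. It therefore suffices to show
$$
\langle Xv_\mu,Yv_\nu\rangle=0
$$
for all such monomials $X,Y$. We argue by induction on the length of $X$.

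If $X=1$, then $\langle v_\mu,Yv_\nu\rangle$ is a pairing between weight vectors of weights $\mu$ and $\nu-(\text{sum of positive roots})$. By part (1) it vanishes unless $\mu=\nu-\beta$ with $\beta\geq0$, where $\beta$ is the weight removed by $Y$. Symmetrically, for $Y=1$ the pairing vanishes unless $\nu=\mu-\beta'$ with $\beta'\geq 0$.

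For the inductive step, move one lowering factor of $X$ across the form using contravariance: for $E_{ba}\in\mathfrak{k}$ we have $(E_{ba})^{\medstar}=E_{ab}$, with no sign, because within $\mathfrak{k}$ both indices lie on the same side of $p$. This reduces to $\langle X'v_\mu,E_{ab}Yv_\nu\rangle$, where $X'$ is shorter. Now $E_{ab}Yv_\nu$ lies in $L(\nu)$ and can be rewritten, using $E_{ab}v_\nu=0$ and the commutation relations, as a combination of lowering monomials applied to $v_\nu$. Iterating, every pairing reduces to pairings $\langle v_\mu,Zv_\nu\rangle$ with $Z$ a lowering monomial. These vanish unless $\mu\leq\nu$ in the dominance order.

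Running the same argument with the roles of $\mu$ and $\nu$ exchanged (using conjugate symmetry of the form) shows that the pairing also vanishes unless $\nu\leq\mu$. Hence a nonzero pairing forces $\mu=\nu$, contradicting $\mu\neq\nu$. This proves $\langle L(\mu),L(\nu)\rangle=0$.

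The main subtlety is the bookkeeping in the inductive reduction, since the $\mathfrak{k}$-lowering operators include odd ones. For $\mathfrak{gl}_{q|n}$ the star-operation gives $(E_{ab})^{\medstar}=E_{ba}$ with no sign on the relevant block, so contravariance transfers operators cleanly and no extra signs enter the induction.
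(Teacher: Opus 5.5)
Your proof is correct, but part (2) takes a different route from the paper's.

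\textbf{Part (1).} This is the paper's argument. Your caveat about reality is justified. The paper writes $\mu(E_{aa})\langle v_\mu,v_\nu\rangle=\langle E_{aa}v_\mu,v_\nu\rangle$, which tacitly assumes $\mu$ is real, since the form is anti-linear in the first slot. In the application (Proposition~\ref{prop: nscon}) this holds because $\Lambda\in\Dpqn$ is real, exactly as you say.

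\textbf{Part (2), common reduction.} Your induction amounts to $\langle Xv_\mu,w\rangle=\langle v_\mu,X^{\medstar}w\rangle$ with $X^{\medstar}w\in L(\nu)$. This can be done in one step, with no induction. So both proofs reduce the claim to showing $\langle v_\mu,L(\nu)\rangle=0$.

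\textbf{Part (2), where the routes diverge.} The paper moves the lowering monomial back to the left: $\langle v_\mu,Zv_\nu\rangle=\langle Z^{\medstar}v_\mu,v_\nu\rangle$. Here $Z^{\medstar}$ is a product of raising operators of $\mathfrak{k}$, so it kills $v_\mu$ unless $Z$ is a scalar. Everything then collapses to the single pairing $\langle v_\mu,v_\nu\rangle$, which vanishes by part (1).

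You instead apply part (1) to every pair $(v_\mu,Zv_\nu)$. That only shows a nonzero pairing forces $\mu\leq\nu$. You then need a second pass with the roles of $\mu$ and $\nu$ swapped (via conjugate symmetry of the form). You also need antisymmetry of the dominance order, i.e.\ that the cone $\mathbb{Z}_+\Phi^+_{\mathrm{c}}$ is pointed. This is true, since for instance the linear functional $\epsilon_a\mapsto -a$ is strictly positive on every root in $\Phi^+_{\mathrm{c}}$. But you use it without stating it, and it should be said.

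\textbf{Comparison.} The paper's version is shorter: it invokes the weight argument only once and needs no symmetry or ordering argument. Yours is equally valid. It simply does not exploit the fact that $v_\mu$ is annihilated by the raising operators of $\mathfrak{k}$.
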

\begin{proof}
For part (1), since $\mu\neq \nu$, there exists $a\in\{1,\ldots,m+n\}$ such that $\mu(E_{aa})\neq \nu(E_{aa})$. As the star-operation 
fixes the Cartan subalgebra of $\mathfrak{k}$, we have 
$$
 \mu(E_{aa})\,\langle v_{\mu}, v_{\nu}\rangle 
 = \langle E_{aa}v_{\mu}, v_{\nu}\rangle 
 = \langle v_{\mu}, (E_{aa})^{\medstar} v_{\nu}\rangle
 = \nu(E_{aa})\,\langle v_{\mu}, v_{\nu}\rangle,
$$
hence $\langle v_{\mu}, v_{\nu}\rangle=0$. 
For part (2), let $v_{\mu}$ and $v_{\nu}$ be highest-weight vectors of $L(\mu)$ and $L(\nu)$, respectively. 
Let $\mathfrak{b}'= \mathfrak{b}\cap \mathfrak{k}$ be the standard Borel subalgebra of $\mathfrak{k}$, 
and let $\mathfrak{n}'_+$ (respectively $\mathfrak{n}'_-$) be the nilpotent radical (respectively opposite nilpotent radical) 
of~$\mathfrak{b}'$. Since the Cartan elements are fixed under the star-operation and act as scalars on highest-weight vectors,
it follows from $0=\langle v_{\mu}, v_{\nu}\rangle$ that
\begin{align*}
 0 &= \langle \mathrm{U}(\mathfrak{b}') v_{\mu}, v_{\nu}\rangle 
 = \langle v_{\mu},\mathrm{U}(\mathfrak{n}'_{-}) v_{\nu}\rangle
 = \langle v_{\mu}, L(\nu)\rangle 
 = \langle v_{\mu}, \mathrm{U}(\mathfrak{k}) L(\nu)\rangle
 = \langle \mathrm{U}(\mathfrak{k}) v_{\mu},L(\nu)\rangle 
\\[.1cm] 
 & = \langle L (\mu), L(\nu) \rangle.
\end{align*}
\end{proof}

As the simple quotient of $V(\Lambda)$, the module $L(\Lambda)$ inherits the extended contravariant Hermitian form on
$V(\Lambda)$. As demonstrated in the next proposition, a simple criterion ensures that this form on $L(\Lambda)$ is positive-definite. 
\begin{proposition}\label{prop: nscon}
Let $\Lambda\in \Dpqn$, and suppose $L_0(\Lambda)$ is a type-1 unitary simple $\mathfrak{k}$-module. 
Then, $L(\Lambda)$ is unitary if and only if
$$
 (\Lambda-\xi, \Lambda+\xi+2\rho)\leq 0,\qquad
 \forall\,\xi\in \Pi_{\mathfrak{k}}(\Lambda).
$$
\end{proposition}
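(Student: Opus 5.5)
We propose to relate the norm of vectors of degree $k$ to norms of vectors of degree $k-1$ through the operator $\Gamma$, and then argue by induction on the $\mathbb{Z}_+$-grading \eqref{eqn: decomp}. The key observation is that, for $i\le p$ and $a>p$, the star-operation \eqref{eq: star} gives $(E_{ia})^{\medstar}=-E_{ai}$. Hence, for any $v\in L(\Lambda)$,
$$
 \langle v,\Gamma v\rangle=\sum_{i=1}^p\sum_{a=p+1}^{m+n}\langle v, E_{ai}E_{ia}v\rangle
 =-\sum_{i=1}^p\sum_{a=p+1}^{m+n}\langle E_{ia}v,E_{ia}v\rangle .
$$
Here we use that $\langle E_{ai}^{\medstar}\,\cdot,\cdot\rangle$ rewrites as $-\langle E_{ia}\cdot,\cdot\rangle$ up to the adjoint relation $\langle x, E_{ai}y\rangle=-\langle E_{ia}x,y\rangle$. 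Thus on a unitary module $\Gamma$ is negative semidefinite, and by \lemref{lem: kinv} it acts on a $\mathfrak{k}$-highest-weight vector of weight $\xi$ by $\gamma_\xi=\tfrac12(\Lambda-\xi,\Lambda+\xi+2\rho)$.

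\emph{Necessity.} Assume $L(\Lambda)$ is unitary, and let $v_\xi\neq0$ be a $\mathfrak{k}$-highest-weight vector of weight $\xi\in\Pi_{\mathfrak{k}}(\Lambda)$. Since $\Gamma$ commutes with $\mathfrak{k}$, it preserves the $\mathfrak{k}$-isotypic pieces of the admissible, semisimple module $L(\Lambda)$. Applying \lemref{lem: kinv} on $L(\Lambda)$, which is a quotient of $V(\Lambda)$ (we can lift $v_\xi$ to a $\mathfrak k$-highest-weight vector, or rerun the Casimir computation directly in $L(\Lambda)$), gives $\gamma_\xi\langle v_\xi,v_\xi\rangle=\langle v_\xi,\Gamma v_\xi\rangle\le0$. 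Positivity of $\langle v_\xi,v_\xi\rangle$ then yields the inequality.

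\emph{Sufficiency.} We show by induction on $k$ that the inherited form is positive-definite on $L_k(\Lambda)$. The components $L_k(\Lambda)$ are mutually orthogonal, and $L_0(\Lambda)$ is positive by hypothesis. For the inductive step, first observe that the form is nondegenerate on each $L_k(\Lambda)$, since the radical of the form on $V(\Lambda)$ is its maximal submodule. Next, for $w\in L_k(\Lambda)$ with $k\ge1$, the vectors $E_{ia}w$ lie in $L_{k-1}(\Lambda)$, which is positive-definite by induction. Therefore
$$
 -\langle w,\Gamma w\rangle=\sum_{i,a}\langle E_{ia}w,E_{ia}w\rangle\ge0,
$$
with equality only if $E_{ia}w=0$ for all $i,a$, i.e.\ $\mathfrak{k}_+w=0$. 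Such a $w$ generates a proper submodule if $k\ge1$, so $w=0$ by simplicity.

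It remains to convert this into positivity of $\langle w,w\rangle$, and this is the main obstacle. The plan is as follows. From the identity above, the form restricted to $L_k(\Lambda)$ is definite exactly when $-\Gamma$ acts positively, so we need $\Gamma$ to act semisimply on $L_k(\Lambda)$ with eigenvalues $\gamma_\xi$. For this we use that $L_k(\Lambda)$ is a quotient of $\mathfrak k_-\otimes L_{k-1}(\Lambda)$. By induction $L_{k-1}(\Lambda)$ is type-1 unitary, and $\mathfrak{k}_-\cong(\mathbb{C}^p)^*\otimes\mathbb{C}^{q|n}$ is type-1 unitary, so the tensor product, and hence $L_k(\Lambda)$, is $\mathfrak{k}$-semisimple. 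By \lemref{lem: ortho}, $L_k(\Lambda)$ then decomposes orthogonally into isotypic components $L_k^\xi$ with $\xi\in\Pi_{\mathfrak k}(\Lambda)$. On each component, $\Gamma$ acts by the scalar $\gamma_\xi$: it acts so on highest-weight vectors by \lemref{lem: kinv} and commutes with $\mathfrak{k}$. The hypothesis gives $\gamma_\xi\le0$.

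We now split into two cases. If $\gamma_\xi<0$, then for $w\in L_k^\xi$ we have $\langle w,w\rangle=\gamma_\xi^{-1}\langle w,\Gamma w\rangle\ge0$, with equality only for $w=0$ by the previous step. If $\gamma_\xi=0$, the computation gives $\mathfrak{k}_+w=0$ for all $w\in L_k^\xi$, forcing $L_k^\xi=0$ for $k\ge1$. This completes the induction, and positive-definiteness on all of $L(\Lambda)$ follows.
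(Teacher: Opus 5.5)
Your proof is correct and follows the paper's argument. Necessity comes from $\langle v_\xi,\Gamma v_\xi\rangle=-\sum_{i,a}\langle E_{ia}v_\xi,E_{ia}v_\xi\rangle\le 0$ combined with Lemma~\ref{lem: kinv}, and sufficiency is an induction on the grading using $\mathfrak{k}$-semisimplicity of $L_k(\Lambda)$, the scalar action of $\Gamma$ on isotypic components, and their orthogonality from Lemma~\ref{lem: ortho}. The differences are cosmetic: you get semisimplicity of $L_k(\Lambda)$ from it being a quotient of $\mathfrak{k}_-\otimes L_{k-1}(\Lambda)$ rather than of $V_k(\Lambda)$, and you treat $\gamma_\xi=0$ as a separate case, which the paper's strict inequality already excludes.
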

\begin{proof}
If $L(\Lambda)$ is unitary, then $L(\Lambda)$ is a semisimple $\mathfrak{k}$-module and there exists 
a $\mathfrak{k}$-highest-weight vector $v_{\xi}$ of weight $\xi$ for each $\xi\in \Pi_{\mathfrak{k}}(\Lambda)$.
It follows that
$$
 \langle\Gamma v_{\xi}, v_{\xi} \rangle 
 = - \sum_{i=1}^p \sum_{a=p+1}^{m+n} \langle E_{ia} v_{\xi}, E_{ia} v_{\xi} \rangle \leq 0,
$$
so $(\Lambda-\xi, \Lambda+\xi+2\rho)\leq0$ by \lemref{lem: kinv}.

As to the converse, we use induction on the $\mathbb{Z}$-grading \eqref{eqn: decomp} of $L(\Lambda)$, and let 
$L^{p,q|n}(\xi)\subseteq V(\Lambda)$ denote a simple $\mathfrak{k}$-module 
generated by a highest-weight vector $v_{\xi}$ of weight $\xi\in \Pi_{\mathfrak{k}}(\Lambda)$. 
As $L_{0}(\Lambda)$ is a type-1 unitary $\mathfrak{k}$-module equipped with a positive-definite contravariant Hermitian 
form $\langle-, -\rangle$, $V_k(\Lambda)= S_k((\mathbb{C}^{p})^\ast\otimes \mathbb{C}^{q|n})\otimes L_{0}(\Lambda)$ is a
type-1 unitary $\mathfrak{k}$-module and thus $\mathfrak{k}$-semisimple. It follows that $L_k(\Lambda)$ is a 
semisimple $\mathfrak{k}$-module for any $k\in \mathbb{Z}_+$, so we have a finite $\mathfrak{k}$-module decomposition of the form
$$
 L_k(\Lambda)\cong \bigoplus_{r} K^{p,q|n}(\xi_r),
$$
where $K^{p,q|n}(\xi_r)$ is a direct sum of simple $\mathfrak{k}$-modules isomorphic to $L^{p,q|n}(\xi_r)$.

For the induction step, let $k>1$ and assume that the inherited contravariant Hermitian form $\langle-,- \rangle$ 
is positive-definite on $L_{k-1}(\Lambda)$.
Now, every nonzero vector $v_r\in K^{p,q|n}(\xi_r)$ is a linear combination of vectors of the form 
$E_{ai}w$, where $i\in\{1,\ldots,p\}$, $a\in\{p+1,\ldots,m+n\}$, and $w\in L_{k-1}(\Lambda)$. 
Moreover, there exist $j \in \{1, \dots, p\} $ and $b \in \{p+1, \dots, m+n\}$ such that $E_{jb}v_r \neq 0$; otherwise, we would have a contradiction 
with $v_r\in L_k(\Lambda)$. As $\Gamma$ acts on $v_r$ as scalar multiplication by $\gamma$, the induction hypothesis implies that
$$
 \gamma\, \langle v_r, v_r\rangle =- \sum_{i=1}^p \sum_{a=p+1}^{m+n} \langle E_{ia} v_r, E_{ia} v_r \rangle <0,
$$
and since
$$
 \gamma=\frac{1}{2}(\Lambda-\xi, \Lambda+\xi+2\rho)\leq 0,
$$
we have $\langle v_r, v_r\rangle>0$. 
By \lemref{lem: ortho}, $K^{p,q|n}(\xi_i)$ and $K^{p,q|n}(\xi_j)$ are orthogonal for
$i \neq j$, so for any vector $v=\sum_{r}v_r$ with $v_r\in K^{p,q|n}(\xi_r)$ for each $r$,
$$
 \langle v, v \rangle = \sum_{r}\, \langle v_r , v_r \rangle >0.
$$
It follows that the contravariant Hermitian form $\langle -, -\rangle $ on $L_{k}(\Lambda)$ is positive-definite, 
and since different graded components are orthogonal, $L(\Lambda)$ is unitary.
\end{proof}

\section{Howe duality}
\label{sec: Howe}

In this section, we recall from \cite{CLZ04} (see also \cite{CW01})
the action of $\mathfrak{gl}_{d}\times \mathfrak{gl}_{p+q|n}$ on the supersymmetric 
algebra $S( (\mathbb{C}^{d})^\ast\otimes (\mathbb{C}^{p})^\ast \oplus \mathbb{C}^d\otimes \mathbb{C}^{q|n})$ for any fixed 
positive integer~$d$. This gives rise to the $(\mathfrak{gl}_{d}, \mathfrak{gl}_{p+q|n})$-Howe duality, yielding a multiplicity-free 
decomposition of the supersymmetric algebra into simple $(\mathfrak{gl}_{d}\oplus \mathfrak{gl}_{p+q|n})$-modules. 
In \secref{sec: Integral}, this enables an explicit construction of infinite-dimensional unitary 
$\mathfrak{gl}_{p+q|n}$-modules with integral highest weights as classified in Section \ref{sec: class}.
We recall the notation $m=p+q$.

\subsection{Oscillator superalgebra}
\label{sec: glgl-action}

Fix a positive integer $d$, and let $\mathbb{C}^d$ denote the natural $\mathfrak{gl}_{d}$-module with standard basis $\{v_1,\dots, v_d\}$. 
Let $(\mathbb{C}^{d})^\ast$ be the dual module spanned by the dual basis $\{v^{1}, \dots, v^{d}\}$ such that $v^{a}(v_{b})=\delta_{a,b}$ 
for all $a,b\in\{1,\ldots,d\}$. For each pair $a,b\in\{1,\ldots,d\}$, denote by $e_{ab}$ the matrix unit, 
so $e_{ab}v_c= \delta_{b,c}v_a$ for all $c\in \{1,\dots d\}$.
Also, let $\{e_1,\dots, e_{m+n}\}$ denote the standard homogeneous basis for the 
natural $\mathfrak{gl}_{p+q|n}$-module $\mathbb{C}^{m|n}$, and let $\{e^1, \dots, e^{m+n}\}$ be the basis for
$(\mathbb{C}^{m|n})^\ast$ such that $e^{a}(e_b)=\delta_{a,b}$ for all $a,b\in\{1,\ldots,m+n\}$. 
We identify $(\mathbb{C}^p)^\ast$ with the subspace of $(\mathbb{C}^{m|n})^\ast$ spanned by $\{e^1, \dots, e^p\}$, 
and $\mathbb{C}^{q|n}$ with the subspace of $\mathbb{C}^{m|n}$ spanned by $\{e_{p+1}, \dots, e_{m+n}\}$.

The supersymmetric algebra $S( (\mathbb{C}^{d})^\ast\otimes (\mathbb{C}^{p})^\ast \oplus \mathbb{C}^d\otimes \mathbb{C}^{q|n})$ is 
isomorphic to the polynomial superalgebra $\Cpqnd[{\bf x}, {\bf y}, \eta]$ in the following variables:
\begin{align}\label{eq: xynu}
 x^{a}_k: = v_{a}\otimes e_{p+k}, \qquad 
 y_i^{a}:= v^{a}\otimes e^i, \qquad 
 \eta_{\mu}^{a}:= v_{a}\otimes e_{m+\mu}, 
\end{align}
with
$$
 a\in\{1,\ldots,d\},\qquad
 k\in\{1,\ldots,q\},\qquad
 i\in\{1,\ldots,p\},\qquad
 \mu\in\{1,\ldots,n\}.
$$ 
Note that $x_k^{a}$ and $y_i^{a}$ are even variables, 
while $\eta_{\mu}^{a}$ are odd. Writing $\partial_{ x^{a}_k}, \partial_{y_i^{a}}, \partial_{\eta_{\mu}^{a}}$ for the partial derivatives 
with respect to these variables, we let $\Dpqnd[{\bf x}, {\bf y}, \eta]$ denote the oscillator superalgebra generated by the 
variables $x^{a}_k$, $y^{a}_i$, $\eta^a_{\mu}$ and their derivatives $\partial_{x^a_k}$, $\partial_{y^a_i}$, $\partial_{\eta^a_{\mu}}$. 
Then, $\Cpqnd[{\bf x}, {\bf y}, \eta]$ is a simple module over $\Dpqnd[{\bf x}, {\bf y}, \eta]$.

Let $\rho$ be the associative superalgebra homomorphism
$$
 \rho: {\rm U}(\mathfrak{gl}_{d} \oplus \mathfrak{gl}_{p+q|n}) \to \mathbb{D}[{\bf x}, {\bf y}, \eta],
$$
defined by
$$
 \rho(e_{ab})
 =\sum_{k=1}^{q} x^a_k\, \partial_{x_k^b} 
 - \sum_{i=1}^p y_i^b \partial_{y_i^a}
 + \sum_{\mu=1}^n \eta^{a}_{\mu} \partial_{\eta_{\mu}^b}, \qquad 
 a,b\in\{1,\ldots,d\},
$$
and
\begin{align*}
 &\rho(E_{i,j})= - \sum_{a=1}^d \partial_{y^a_i}y^a_j,&\,& 
 \rho(E_{i,p+\ell}) = \sum_{a=1}^d \partial_{y^a_i} \partial_{x^a_\ell}, &\, &
 \rho (E_{i,p+q+\nu})= \sum_{a=1}^d \partial_{y^a_i} \partial_{\eta^a_\nu}, 
 \\
 &\rho(E_{p+k, j})= - \sum_{a=1}^d x_{k}^a y_j^a, &\,& 
 \rho(E_{p+k,p+\ell}) = \sum_{a=1}^d x_k^a \partial_{x_\ell^a}, &\, & 
 \rho (E_{p+k, p+q+\nu})= \sum_{a=1}^d x_{k}^a \partial_{\eta^a_{\nu}},
 \\
 & \rho(E_{p+q+\mu, j})= - \sum_{a=1}^d \eta_{\mu}^a y_{j}^a, &\, & 
 \rho(E_{p+q+\mu,p+\ell})= \sum_{a=1}^d \eta_{\mu}^a \partial_{x_\ell^a}, &\, & 
 \rho(E_{p+q+\mu,p+q+\nu})= \sum_{a=1}^d \eta_{\mu}^a\partial_{\eta_{\nu}^a}.
\end{align*}
It is straightforward to verify that the differential operators $\rho(e_{ab})$ (respectively $\rho(E_{ij})$) satisfy the commutation relations 
of $\mathfrak{gl}_{d}$ (respectively $\mathfrak{gl}_{p+q|n}$), 
and that $\rho(e_{ab})$ commutes with $\rho(E_{ij})$ for all relevant $a,b,i,j$. 
This gives a realisation of $\mathfrak{gl}_d\times \mathfrak{gl}_{p+q|n}$ in terms of differential operators, 
yielding a linear action on $\Cpqnd[\mathbf{x}, \mathbf{y}, \eta]$. 

The oscillator superalgebra $\mathbb{D}^{p,q|n}_d[{\bf x}, {\bf y}, \eta]$ admits the star-superalgebra structure $\psi$ defined by
$$
 \psi(z)=\partial_z,\qquad
 \psi(\partial_z)=z,
$$
for all variables $z$ given in \eqref{eq: xynu}.
There exists a unique Hermitian form $\langle -, - \rangle$ on $\Cpqnd[{\bf x}, {\bf y}, \eta]$ satisfying $\langle 1, 1 \rangle =1$ and
\begin{align}\label{eq: Herm}
 \langle fg, h\rangle = \langle g, \psi(f)h\rangle, \qquad 
 f,g,h\in \Cpqnd[{\bf x}, {\bf y}, \eta].
\end{align} 
Consequently, $\langle M, M\rangle>0$ for every nonzero monomial $M\in \Cpqnd[{\bf x}, {\bf y}, \eta]$, and $\langle -, - \rangle$ is 
positive-definite. Similarly, ${\rm U}(\mathfrak{gl}_{d}\oplus \mathfrak{gl}_{p+q|n})$ has a star-operation $\sigma$ given by 
$$
 \sigma(e_{ab})=e_{ba}, \qquad
 \sigma(E_{ij})= \begin{cases}
 E_{ji}, & i,j\leq p\ \ \text{or}\ \ i,j>p,\\[.1cm]
 -E_{ji}, & \text{otherwise,} 
 \end{cases}
$$
for all applicable $a,b,i,j$. These two star-structures are compatible in the sense that 
$$
 \rho\sigma(X)= \psi \rho(X), \qquad 
 X\in {\rm U}(\mathfrak{gl}_{d}\oplus \mathfrak{gl}_{p+q|n}),
$$
and we conclude that $\Cpqnd[\mathbf x,\mathbf y,\eta]$ is a unitary 
${\rm U}(\mathfrak{gl}_{d}\oplus \mathfrak{gl}_{p+q|n})$-module
with respect to the Hermitian form \eqref{eq: Herm}; cf. \cite[Theorem~3.2]{CLZ04}.

\subsection{Decomposition of supersymmetric algebra}

A \textit{partition} $\lambda= (\lambda_1, \dots, \lambda_k)$ of length $k$ is a non-increasing sequence of non-negative integers: 
$\lambda_1\geq\cdots\geq\lambda_k\geq0$. We denote by $\mathcal{P}_k$ the set of partitions of length $k$. 
The \textit{conjugate partition} of $\lambda\in\mathcal{P}_k$ is $\lambda'= (\lambda'_1, \dots, \lambda'_{\ell})$, 
where $\ell=\lambda_1$ and $\lambda'_i= \#\{j \,|\, \lambda_j\geq i\}$ for $i=1, \dots, \ell$. 
If $\lambda_1=0$, we set $\lambda'=(0)$. A \textit{generalised partition} of length $k$ is a non-increasing sequence of $k$ integers 
(some of which could be negative). By construction, each generalised partition $\lambda=(\lambda_1, \dots, \lambda_k)$ 
can be written as $\lambda=\lambda_{+}+ \lambda_{-}$, where 
$$
 \lambda_+:= ( {\rm max}\{\lambda_1, 0\}, \dots , {\rm max}\{\lambda_k, 0\} ), \qquad 
 \lambda_{-}:= ( {\rm min}\{\lambda_1, 0\}, \dots , {\rm min}\{\lambda_k, 0\} ). 
$$
We also introduce
$$ 
 \lambda_{-}^*:=(-{\rm min}\{\lambda_k, 0\}, \dots , -{\rm min}\{\lambda_1, 0\}),
$$
and note that $\lambda_{+},\lambda_{-}^*\in\mathcal P_k$. 
We adopt the convention that $\lambda_i=0$ if the index $i$ is non-positive or greater than~$k$.

Denote by $\Ppqnk$ the set of generalised partitions $\lambda=(\lambda_1, \dots, \lambda_k)$ of length $k$ 
such that $\lambda_{q+1}\leq n$ and $\lambda_{k-p}\geq 0$. Associated to a generalised partition 
$\lambda=(\lambda_1, \dots, \lambda_d)\in \Ppqnd$, 
we define a sequence $\lambda^\flat$ of length $p+q+n$ by setting 
\begin{align}\label{eq: laflat}
 \lambda^{\flat}:
 &=(-d,\dots,-d, -d+\lambda_r, \dots, -d+\lambda_d,(\lambda_+)_1, \dots, (\lambda_+)_q,
 \nonumber\\[.1cm]
 &\qquad\qquad\qquad\qquad\qquad\qquad
 {\rm max}\{(\lambda'_+)_1-q,0\}, \dots, {\rm max}\{(\lambda'_+)_n-q\}).
\end{align}
Here, $r\in \{ d-p+1, \dots, d\}$ is the smallest index such that $\lambda_r<0$, if such an index exists; 
otherwise, the first $p$ entries of $\lambda^{\flat}$ are all $-d$. 

To each generalised partition $\lambda\in\Ppqnd$, we associate the unique dominant integral $\mathfrak{gl}_d$-weight
$$
 \lambda= \sum_{i=1}^d\lambda_i \varepsilon_i,
$$
where $\{\varepsilon_1,\dots, \varepsilon_d\}$ is the standard basis for the dual space of the Cartan subalgebra of $\mathfrak{gl}_d$.
Let $L^d(\lambda)$ denote the corresponding simple highest-weight module. Similarly, we identify $\lambda^\flat$ 
with a $\Phi^+_c$-dominant integral weight of $\mathfrak{gl}_{p+q|n}$ via \eqref{def: Lambda}. 
The following theorem is derived from \cite[Lemma~3.2, Theorem 3.3]{CLZ04} 
and establishes the $(\mathfrak{gl}_{d}, \mathfrak{gl}_{p+q|n})$-Howe duality.
\begin{theorem}\label{thm: Howe}
Under the action of $\mathfrak{gl}_{d}\times \mathfrak{gl}_{p+q|n}$ described in Section \ref{sec: glgl-action}, 
$\Cpqnd[{\bf x}, {\bf y}, \eta]$ decomposes into a multiplicity-free direct sum of simple modules, as
$$
 \Cpqnd[{\bf x}, {\bf y}, \eta] 
 \cong \bigoplus_{\lambda\in \Ppqnd} L^d(\lambda)\otimes L^{p+q|n}(\lambda^{\flat}). 
$$
\end{theorem}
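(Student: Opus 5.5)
The plan is to deduce this decomposition from the $(\mathfrak{gl}_d,\mathfrak{gl}_{p+q|n})$-Howe duality of \cite{CLZ04}, after fixing the dictionary between their conventions and ours. The starting point is the unitarity of $\Cpqnd[\mathbf x,\mathbf y,\eta]$ with respect to \eqref{eq: Herm}, established in Section \ref{sec: glgl-action}. This gives complete reducibility under $\mathfrak{gl}_d\oplus\mathfrak{gl}_{p+q|n}$. Since the space is $\mathbb{Z}_+$-graded by total degree with finite-dimensional components, and the $\mathfrak{gl}_d$-action preserves degree, each isotypic component can be analysed degree by degree.

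The structural input is the double commutant property. The image $\rho(\mathrm U(\mathfrak{gl}_{p+q|n}))$ is the full commutant of $\rho(\mathrm U(\mathfrak{gl}_d))$ in the degree-preserving part of the oscillator superalgebra $\Dpqnd[\mathbf x,\mathbf y,\eta]$, and conversely. This is the content of \cite[Lemma~3.2]{CLZ04}, proved via invariant theory for $\mathrm{GL}_d$ acting on $d$ copies of $\mathbb{C}^{q|n}\oplus(\mathbb{C}^p)^\ast$ (first fundamental theorem in the super setting). Combining it with complete reducibility and simplicity of $\Cpqnd$ over $\Dpqnd$, the standard argument shows that the decomposition is multiplicity-free. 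It also shows that each $\mathfrak{gl}_d$-isotypic component $L^d(\lambda)\otimes M_\lambda$ has simple $\mathfrak{gl}_{p+q|n}$-multiplicity space $M_\lambda$, and that $\lambda\mapsto M_\lambda$ is injective.

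Next, the set of $\lambda$ that occur must be identified. The $\mathfrak{gl}_d$-module $\Cpqnd\cong S(\mathbb{C}^d\otimes\mathbb{C}^{q|n})\otimes S((\mathbb{C}^d)^\ast\otimes(\mathbb{C}^p)^\ast)$ is a tensor product of a covariant and a contravariant super-Howe factor. By the $(\mathfrak{gl}_d,\mathfrak{gl}_{q|n})$ duality, the first factor contains exactly the partitions $\mu$ of length at most $d$ in the $(q,n)$-hook, that is, with $\mu_{q+1}\le n$. By the classical $(\mathfrak{gl}_d,\mathfrak{gl}_p)$ duality, the second contains exactly the duals of partitions of length at most $p$.

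The highest-weight vectors that survive, namely those of joint $\mathfrak{gl}_d$- and $\mathfrak{b}_{m|n}$-highest weight, are harmonic in the sense of being killed by the lowering contractions $\rho(E_{i,p+\ell})$ and $\rho(E_{i,m+\nu})$. They correspond to generalised partitions $\lambda$ with $\lambda_+$ in the hook and at most $p$ negative parts. This is precisely the condition $\lambda_{q+1}\le n$, $\lambda_{d-p}\ge 0$ defining $\Ppqnd$.

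Finally, the $\mathfrak{gl}_{p+q|n}$-highest weight $\lambda^\flat$ is computed by writing down the explicit joint highest-weight vector. It is a product of leading minors in the $x$- and $y$-variables: determinants for the rows of $\lambda_+$ of length at most $q$, extended by $\eta$-variables for the hook's arm part, times the corresponding minors in $y$ for $\lambda_-^\ast$.

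Evaluating the Cartan elements $\rho(E_{aa})$ on this vector gives the weight. The term $-\sum_a\partial_{y^a_i}y^a_i=-d-\sum_a y^a_i\partial_{y^a_i}$ produces the shift $-d$ in the first $p$ entries. The $x$-degrees give $(\lambda_+)_1,\dots,(\lambda_+)_q$, and the $\eta$-degrees give $\max\{(\lambda'_+)_\nu-q,0\}$, reproducing \eqref{eq: laflat}.

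I expect the main obstacle to be this last identification. One must check that the candidate vector is annihilated by all of $\rho(\mathfrak{b}_{m|n})$, including the odd raising operators and the operators $\rho(E_{i,p+\ell})=\sum_a\partial_{y^a_i}\partial_{x^a_\ell}$ mixing the $x$- and $y$-parts. I would handle this by working in a reordered ("$y$-part last") setting where the vector is visibly a product of highest-weight vectors, and then verifying directly that the contraction operators kill it because of the support conditions on $\lambda$. This is exactly the content of \cite[Theorem~3.3]{CLZ04}, so in practice this step reduces to translating their weight labels into our standard Borel conventions.
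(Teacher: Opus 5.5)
Your proposal is correct and follows the same route as the paper, which gives no independent argument but derives the decomposition directly from \cite[Lemma~3.2, Theorem~3.3]{CLZ04}. Your explicit joint highest-weight vector and the computation $\rho(E_{ii})=-d-\sum_a y^a_i\partial_{y^a_i}$ ($i\le p$) carry out exactly the translation of their labels into the standard-Borel weights \eqref{eq: laflat}.

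One small slip: $\rho(\mathrm{U}(\mathfrak{gl}_{p+q|n}))$ does not preserve total polynomial degree (e.g.\ $\rho(E_{p+k,j})=-\sum_a x^a_k y^a_j$). The commutant statement should therefore be made in the whole oscillator superalgebra, where commuting with $\rho(\mathfrak{gl}_d)$ only forces preservation of the charge $\deg\mathbf{x}+\deg\eta-\deg\mathbf{y}$, rather than in its total-degree-preserving part.
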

\begin{corollary}
Every infinite-dimensional $\mathfrak{gl}_{p+q|n}$-module $L(\lambda^{\flat})$ with $\lambda\in \mathcal{P}_{p+q|n}$ is unitary. 
\end{corollary}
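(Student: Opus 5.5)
The corollary should follow from \thmref{thm: Howe} together with the unitarity of the oscillator realisation. Fix $d$ and $\lambda\in\Ppqnd$. By \thmref{thm: Howe}, the space $\Cpqnd[\mathbf x,\mathbf y,\eta]$ contains the isotypic component $L^d(\lambda)\otimes L^{p+q|n}(\lambda^\flat)$. The goal is to exhibit a copy of $L(\lambda^\flat)$ inside the polynomial superalgebra and restrict the positive-definite form \eqref{eq: Herm} to it.

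\textbf{Step 1 (choosing the copy).} Take a $\mathfrak{gl}_d$-highest-weight vector of weight $\lambda$ inside the summand. The space of all such vectors is $\mathfrak{gl}_{p+q|n}$-stable, since the two actions commute. By multiplicity-freeness it is isomorphic to $L^{p+q|n}(\lambda^\flat)=L(\lambda^\flat)$. Call this subspace $W$. Alternatively, pick a nonzero vector $u$ in the weight-$\lambda$ space of $L^d(\lambda)$ and use $u\otimes L(\lambda^\flat)$, transported through the isomorphism. In either form, $W$ is a $\mathfrak{gl}_{p+q|n}$-submodule of $\Cpqnd[\mathbf x,\mathbf y,\eta]$ isomorphic to $L(\lambda^\flat)$.

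\textbf{Step 2 (restricting the form).} By the compatibility $\rho\sigma=\psi\rho$ and \eqref{eq: Herm}, every $E_{ab}$ satisfies
$$\langle \rho(E_{ab})f,g\rangle=\langle f,\rho(\sigma(E_{ab}))g\rangle.$$
Here $\sigma$ restricted to $\mathfrak{gl}_{p+q|n}$ is exactly the star-operation \eqref{eq: star}. The form \eqref{eq: Herm} is positive-definite on the whole polynomial superalgebra, because distinct monomials are orthogonal and each monomial has positive norm. Its restriction to $W$ is therefore a positive-definite Hermitian form satisfying \eqref{eq: Hermitian}.

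\textbf{Step 3 (identifying the module).} It remains to check that $W$ is the module $L(\lambda^\flat)$ of the form in \secref{Sec:Uhwm}, namely an admissible simple highest-weight module with $\Phi_c^+$-dominant highest weight. Highest weight and simplicity come directly from \thmref{thm: Howe}. For admissibility, each $\mathfrak{k}$-isotypic piece must be finite-dimensional. I would grade the polynomials by the total degree in the $\mathbf y$ variables minus the degree in the $\mathbf x,\eta$ variables, or use the $\mathbb Z_+$-grading \eqref{eq: VLZ}. In the finitely generated module $\mathrm U(\mathfrak k_-)L_0$ these graded pieces are finite-dimensional, and the form keeps them orthogonal.

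\textbf{Main obstacle: infinite-dimensionality.} The real point is the hypothesis that the module is infinite-dimensional. One has to check that the indexing set $\mathcal P_{p+q|n}$ in the statement means $\Ppqnd$ for some $d$, and that the resulting $L(\lambda^\flat)$ is infinite-dimensional. This follows from $p,q\ge1$: the operators $\rho(E_{p+k,j})=-\sum_a x^a_ky^a_j$ act as multiplication and never annihilate a nonzero polynomial. So every nonzero submodule is infinite-dimensional, and unitarity holds regardless.
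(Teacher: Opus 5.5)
Your proposal is correct and is essentially the paper's argument. The paper reads the corollary off directly from the multiplicity-free decomposition in \thmref{thm: Howe}, combined with the unitarity of $\Cpqnd[\mathbf x,\mathbf y,\eta]$ coming from $\rho\sigma=\psi\rho$ and the positive-definite form \eqref{eq: Herm}. Each summand $L^{p+q|n}(\lambda^\flat)$ thus sits as a submodule of a unitary module; the same reasoning is reused in the proof of \thmref{thm: intuni}.

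Your Step~3 and the infinite-dimensionality discussion are not needed. Infinite-dimensionality is a hypothesis of the statement, although your injectivity observation correctly shows it holds automatically for $p,q\geq 1$. If you do want admissibility, note that $\mathbf y$-degree minus $(\mathbf x,\eta)$-degree is constant on each Howe summand, since it is fixed by the centre of $\mathfrak{gl}_d$. Grade by the $\mathbf y$-degree alone or use \eqref{eq: VLZ} instead.
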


\section{Unitary modules with integral highest weights}
\label{sec: Integral}

Let $\Ppqn$ denote the subset of $\Dpqn$ that consists of the weights whose components are all integers. 
Using Howe duality, we have the following classification result, recalling the notation $m=p+q$.
\begin{theorem}\label{thm: intuni}
Let $\Lambda=(\lambda_1, \dots, \lambda_{p+q}, \omega_1, \dots, \omega_n)\in \Ppqn$. 
Then, $L(\Lambda)$ is unitary 
if and only if one of the following conditions holds:
\begin{enumerate}
\item[{\rm (1)}] Either $\lambda_1+\omega_1<1-m$ or there exists $i\in \{1,\dots, p\}$ such that 
$$
 \lambda_1=\dots =\lambda_i, \qquad 
 \lambda_1+\omega_1= i-m,
$$
and either $\lambda_m+\omega_n>n-1$ or there exists $\mu\in \{2, \dots, n\}$ such that
$$
 \omega_{\mu}=\dots =\omega_{n}, \qquad 
 \lambda_{m}+\omega_n= \mu-1.
$$
\item[{\rm (2)}] There holds
$$
 \omega_1=\dots=\omega_n, \qquad 
 \lambda_m+\omega_{n}=0,
$$
and there exists $j\in \{p, \dots, m-1\}$ such that 
$$
 \lambda_{j+1}=\dots =\lambda_m,
$$
and either $\lambda_1+\omega_1<1-j$ or there exists $i\in \{1,\dots, p\}$ such that 
$$
 \lambda_1=\dots =\lambda_i, \qquad
 \lambda_1+\omega_1= i-j.
$$
\end{enumerate}
\end{theorem}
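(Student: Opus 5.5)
The plan is to reduce the statement to the integral case of \thmref{thm: main}: necessity comes from \propref{prop: nec}, and sufficiency from explicit realisations via \thmref{thm: Howe}, twisted by one-dimensional modules $\mathbb{C}_s$.

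\emph{Step 1 (translation).} Using \eqref{eq: rhoform}, I will rewrite \textbf{(U1)}--\textbf{(U6)} in coordinates:
\begin{align*}
 (\Lambda+\rho,\epsilon_1-\delta_1)&=\lambda_1+\omega_1+m-1,\\
 (\Lambda+\rho,\epsilon_i-\delta_1)&=\lambda_i+\omega_1+m-i,\\
 (\Lambda+\rho,\epsilon_m-\delta_\mu)&=\lambda_m+\omega_\mu+1-\mu.
\end{align*}
Combined with the equalities $(\Lambda,\epsilon_i-\epsilon_1)=0$ and $(\Lambda,\delta_\mu-\delta_n)=0$, this turns the first halves of \textbf{(U1)}--\textbf{(U4)} into the first alternative of condition (1). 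For example, $\lambda_i+\omega_1+m-i=0$ together with $\lambda_1=\lambda_i$ gives $\lambda_1+\omega_1=i-m$. Likewise, their second halves become the second alternative of (1), since $\lambda_m+\omega_n+1-n>0$ is $\lambda_m+\omega_n>n-1$, and $\lambda_m+\omega_\mu=\mu-1$ with $\omega_\mu=\omega_n$. Similarly, \textbf{(U5)} and \textbf{(U6)} become condition (2): $(\Lambda+\rho,\epsilon_m-\delta_1)=0$ is $\lambda_m+\omega_1=0$, and together with $\omega_1=\omega_n$ this is $\lambda_m+\omega_n=0$. Thus the stated conditions are exactly ``$\Lambda$ satisfies one of \textbf{(U1)}--\textbf{(U6)}'' for $\Lambda\in\Ppqn$. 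Necessity then follows immediately from \propref{prop: nec}.

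\emph{Step 2 (sufficiency via Howe duality).} Since $\mathbb{C}_s$ is unitary and $L(\Lambda)\otimes\mathbb{C}_s\cong L(\Lambda^{(s)})$, unitarity is invariant under the integral shift $\Lambda\mapsto\Lambda^{(s)}$. The conditions in the theorem are also invariant under this shift, because they only involve the differences $\lambda_i-\lambda_j$, $\omega_\mu-\omega_\nu$ and the sums $\lambda_i+\omega_\mu$. So for each $\Lambda$ satisfying (1) or (2) I will produce $d\geq1$, a generalised partition $\lambda\in\Ppqnd$ and $s\in\mathbb{Z}$ with $\Lambda=(\lambda^\flat)^{(s)}$. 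Unitarity of $L(\Lambda)$ then follows from \thmref{thm: Howe}, since each summand $L^{p+q|n}(\lambda^\flat)$ of the unitary module $\Cpqnd[\mathbf x,\mathbf y,\eta]$ is unitary.

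Concretely, I normalise $s$ so that the first block of $\Lambda^{(-s)}$ has entries of the form $-d+(\text{nonpositive entries of }\lambda)$, and then read off $\lambda$ from \eqref{eq: laflat}. The $\mathfrak{gl}_p$-part gives the negative tail $\lambda_r,\dots,\lambda_d$. The $\mathfrak{gl}_q$-part gives $(\lambda_+)_1,\dots,(\lambda_+)_q$, and the $\mathfrak{gl}_n$-part gives the column data $\max\{(\lambda'_+)_\mu-q,0\}$. The free parameter $d$ is fixed by $\lambda_1+\omega_1$: one checks that for $\lambda^\flat$ one has $\lambda^\flat_1+\omega^\flat_1=-d+\max\{(\lambda'_+)_1-q,0\}$ (when $r>d-p+1$). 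In the boundary cases the equality $\lambda_1+\omega_1=i-m$ (respectively $i-j$) forces a specific small $d$, namely $d=m-i$ (respectively $d=j-i$), up to the contribution from the odd part. The strict inequalities correspond to large $d$.

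\emph{Main obstacle.} I expect the main work to be this combinatorial matching. One must check case by case that the conditions are exactly what makes the reconstructed $\lambda$ non-increasing, lie in $\Ppqnd$ (that is, $\lambda_{q+1}\leq n$ and $\lambda_{d-p}\geq0$), and have conjugate consistent with the prescribed $\omega$-entries. The atypical equalities $\omega_\mu=\dots=\omega_n$ and $\lambda_{j+1}=\dots=\lambda_m$ should correspond precisely to rows of $\lambda_+$ of length at most $q$ and to columns of length at most $q$ or empty. I would first treat condition (2), where $\omega$ is constant and the hook part is trivial, and then (1), handling the $n$ odd columns via $\lambda'_+$.
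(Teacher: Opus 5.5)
Your proposal follows the paper's proof: necessity comes from rewriting \textbf{(U1)}--\textbf{(U6)} in coordinates via \eqref{eq: rhoform} and invoking \propref{prop: nec}, and sufficiency comes from realising $L(\Lambda)$, up to a twist by a one-dimensional $\mathbb{C}_s$, as a summand in \thmref{thm: Howe}. The combinatorial matching you leave open is settled in the paper by taking $d=-\lambda_1-\omega_n$ and $\lambda=(\lambda^1,\lambda^2,0,\dots,0,\lambda^3)$, with $\lambda^1=(\lambda_{p+1}+\omega_n,\dots,\lambda_m+\omega_n)$, $\lambda^2=(\omega_1-\omega_n,\dots,\omega_{\mu-1}-\omega_n)'$ and $\lambda^3=(\lambda_{i+1}-\lambda_1,\dots,\lambda_p-\lambda_1)$; in case (2), $\lambda^2$ is dropped and $\lambda^1$ is truncated at index $j$. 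The boundary equalities, or the strict inequalities (using $\mu:=n$ or $i:=1$ where needed), are exactly what bound the total length by $d$, and the twist is $\mathbb{C}_{d+\lambda_1}$.
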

\begin{proof}
The necessity follows from \propref{prop: nec}, as the conditions \textbf{(U1)}--\textbf{(U6)} can be expressed 
in terms of integral weight components as follows. Using \eqref{eq: rhoform}, we have 
$$
 (\Lambda+\rho, \epsilon_m-\delta_n)= \lambda_m+\omega_n+ 1-n,\qquad
 (\Lambda+\rho, \epsilon_1-\delta_1)= \lambda_1+\omega_1+ m-1.
$$
Hence, condition \textbf{(U1)} is equivalent to the inequalities $\lambda_m+\omega_n>n-1$ and $\lambda_1+\omega_1<1-m$. 
Similarly, condition \textbf{(U2)} requires the existence of $i\in \{1, \dots, p\}$ such that $\lambda_1=\cdots= \lambda_i$ 
and $\lambda_1+\omega_1=i-m$. Combining conditions \textbf{(U1)}--\textbf{(U4)}, we obtain part (1) of the theorem.
Part (2) follows from the combination of \textbf{(U5)} and \textbf{(U6)}. 

To establish sufficiency, we first suppose $\Lambda$ satisfies condition (1) 
and show that $L(\Lambda)$ is, up to tensoring with a 1-dimensional module, 
a submodule of $S( (\mathbb{C}^{d})^\ast\otimes (\mathbb{C}^{p})^\ast \oplus \mathbb{C}^d\otimes \mathbb{C}^{q|n})$ for some 
positive integer~$d$. To this end, we define the generalised partitions 
\begin{align*}
 \lambda^1(\Lambda)&= (\lambda_{p+1}+\omega_n, \dots, \lambda_{m}+\omega_n), 
 \\[.1cm] 
 \lambda^2(\Lambda)&= (\omega_1-\omega_n, \dots, \omega_{\mu-1}-\omega_n)', 
 \\[.1cm] 
 \lambda^3(\Lambda)&= (\lambda_{i+1}-\lambda_1, \dots, \lambda_p-\lambda_1), 
\end{align*}
where the prime denotes conjugation of partition, and we set $\mu:=n$ if $\lambda_{m}+\omega_n>n-1$. 
Here, $\lambda^1(\Lambda)$ and $\lambda^2(\Lambda)$ are partitions, 
while $\lambda^3(\Lambda)$ is a generalised partition of non-positive integers. 
Writing $\ell(S)$ for the length of such a (generalised) partition, we find 
$$
 \sum_{k=1}^3\ell( \lambda^k(\Lambda))\leq m-i{}+\omega_1-\omega_n\leq -(\lambda_1+\omega_1)+\omega_1-\omega_n
 = -\lambda_1-\omega_n,
$$
where the first inequality holds as $i$ is not required to be the maximal index that satisfies $\lambda_1=\dots =\lambda_i$, 
and similarly for $\mu$. Let $d=-\lambda_1-\omega_n$ and define the length-$d$ generalised partition 
$$
 \lambda=(\lambda^1(\Lambda),\lambda^2(\Lambda),0, \dots, 0,\lambda^3(\Lambda)),
$$
where $0$ appears $d-\sum_{i=1}^3\ell( \lambda^i(\Lambda))$ times. Clearly, $\lambda_{q+1}=\mu-1\leq n$ and $\lambda_{d-p}\geq 0$, 
so $\lambda\in \mathcal{P}_{p+q|n}\cap \mathcal{P}_d$. It follows from \thmref{thm: Howe} that $L^{p+q|n}(\lambda^{\flat})$ appears as a 
submodule of $S( (\mathbb{C}^{d})^\ast\otimes (\mathbb{C}^{p})^\ast\oplus \mathbb{C}^d\otimes \mathbb{C}^{q|n})$ 
and is therefore unitary. By \eqref{eq: laflat}, the highest weight $\lambda^{\flat}$ is given by
\begin{align*}
 \lambda^{\flat}&= (\underbrace{-d,\dots, -d}_{i}, -d+\lambda_{i+1}-\lambda_1, \dots,-d+\lambda_{p}-\lambda_1,
 \\[.1cm]
 &\qquad\lambda_{p+1}+\omega_n, \dots, \lambda_{m}+\omega_n,\omega_1-\omega_n, \dots, \omega_{\mu-1}-\omega_n, 
 \underbrace{0, \dots,0}_{n-\mu+1}).
\end{align*}
As $\mathfrak{gl}_{p+q|n}$-modules, 
$$
 L(\Lambda)\cong L^{p+q|n}(\lambda^{\flat})\otimes \mathbb{C}_{d+\lambda_1},
$$
and since both $ L^{p+q|n}(\lambda^{\flat})$ and $\mathbb{C}_{d+\lambda_1}$ are unitary, so is $L(\Lambda)$.

The situation when $\Lambda$ satisfies condition (2) is similar, so we only sketch the proof here. Let
$$
 \lambda^1(\Lambda)=(\lambda_{p+1}+\omega_n, \dots, \lambda_j+\omega_n), \qquad
 \lambda^3(\Lambda)=(\lambda_{i+1}-\lambda_1, \dots, \lambda_p-\lambda_1).
$$
Then,
$$
 \ell(\lambda^1(\Lambda))+\ell(\lambda^3(\Lambda))= j-i \leq -\lambda_1-\omega_1= -\lambda_1-\omega_n.
$$
Let $d=-\lambda_1-\omega_n$, and define a generalised partition $\lambda$ of length $d$ by 
$$
 \lambda= (\lambda^1(\Lambda), \underbrace{0,\dots, 0}_{d-j+i}, \lambda^3(\Lambda)).
$$
Clearly, $\lambda\in \mathcal{P}_{p+q|n}\cap \mathcal{P}_d$, and the associated highest
$\mathfrak{gl}_{p+q|n}$-weight is 
$$
 \lambda^{\flat}
 =(\underbrace{-d, \dots,-d}_i,-d+\lambda_{i+1}-\lambda_1,\dots,-d+\lambda_{p}-\lambda_1,\lambda_{p+1}+\omega_n,\dots,
 \lambda_{j}+\omega_n,\underbrace{0,\dots,0}_{m+n-j}).
$$
It follows that $L(\Lambda)\cong L^{p+q|n}(\lambda^{\flat})\otimes \mathbb{C}_{d+\lambda_1}$,
which again implies that $L(\Lambda)$ is unitary. 
\end{proof}

\begin{remark}
Unitary $\mathfrak{su}(p,q|n)$-modules with integral highest or lowest weights are classified in \cite{FN91} using an oscillator 
representation of an orthosymplectic Lie superalgebra. \thmref{thm: intuni} agrees with \cite[Theorem 5.5]{FN91} up to tensoring 
with a 1-dimensional module.
\end{remark}

\section{Sufficiency}
\label{sec: suff}

We are now in a position to complete the proof of the sufficiency portion of Theorem~\ref{thm: main}.
The sufficiency of condition \textbf{(U1)} is thus established in \propref{prop: type1suff}, 
and of condition \textbf{(U3)} in \propref{peop: type2suff}.
Using \lemref{lem: intper} below, the sufficiency of condition \textbf{(U2)} is established in \propref{prop: U2}, 
and of condition \textbf{(U5)} in \propref{prop: U5}.
Using \thmref{thm: intuni}, the sufficiency of condition \textbf{(U4)}, and independently of \textbf{(U6)}, is established in \propref{prop: U46}. 
We recall our notation $m=p+q$.

For $\Lambda\in \Dpqn$, we write $\Lambda=(\lambda_1, \dots, \lambda_m, \omega_1, \dots, \omega_n)$ and recall
that $L(\Lambda)$ is the unique simple quotient of the highest-weight $\mathfrak{gl}_{m|n}$-module 
$V(\Lambda)$ defined in \eqref{eq: hwmod}.
For $\Lambda\in\Ppqn$ and any real number $s\in [0,1]$, it is convenient to introduce
\begin{align*}
 \Lambda_{(s+)}&:= (\lambda_1, \dots, \lambda_p, \lambda_{p+1}+s, \dots , \lambda_m+s, \omega_1,\dots,\omega_n),
 \\[.1cm]
 \Lambda_{(s-)}&:= (\lambda_1-s, \dots, \lambda_p-s, \lambda_{p+1}, \dots , \lambda_m, \omega_1,\dots,\omega_n).
\end{align*}
\begin{proposition}\label{prop: type1suff}
Let $\Lambda\in \Dpqn$ with
\begin{align}\label{L0L}
 (\Lambda+\rho, \epsilon_1-\delta_1)
 <0
 <(\Lambda+\rho, \epsilon_m-\delta_n).
\end{align}
Then, $L(\Lambda)$ is unitary.
\end{proposition}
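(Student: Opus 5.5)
The plan is to run a continuity (deformation) argument inside the region cut out by \eqref{L0L}, anchored at integral points where unitarity is already known from \thmref{thm: intuni}. The unitarity criterion itself will come from \propref{prop: nscon}.

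First, by \lemref{lem: kmodL}, the right-hand inequality in \eqref{L0L} makes $L_0(\Lambda)$ type-1 unitary. So \propref{prop: nscon} applies, and, as in its proof, the contravariant form on $V(\Lambda)$ restricts to a form on each finite-dimensional graded piece $V_k(\Lambda)$. After fixing a model of $L_0(\Lambda)$, this form depends continuously (indeed polynomially) on the real parameters of $\Lambda$.

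Fix the integral differences \eqref{eq: k-high}. The free real parameters are then $s_1:=\lambda_1$, $s_2:=\lambda_{p+1}$ and $s_3:=\omega_1$. Tensoring with $\mathbb{C}_s$ shifts all three and preserves unitarity. It also leaves $(\Lambda+\rho,\epsilon_i-\delta_\mu)$ unchanged, because $(\Lambda_s,\epsilon_i-\delta_\mu)=s-s=0$. In the remaining two-dimensional parameter space, \eqref{L0L} is the intersection of two open half-planes, one in $\lambda_1+\omega_1$ and one in $\lambda_m+\omega_n$. This region is convex, hence connected. It also contains integral points, obtained by taking $\lambda_1+\omega_1$ very negative and $\lambda_m+\omega_n$ very positive. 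At such a point \thmref{thm: intuni}(1) applies with the generic (U1) options, so $L(\Lambda)$ is unitary there.

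The key step is to show that $V(\Lambda)=L(\Lambda)$ throughout the region. Equivalently, the form on each $V_k(\Lambda)$ is nondegenerate there, so its signature cannot change along a path in the region. Starting from the integral point, positivity then propagates to every $\Lambda$ satisfying \eqref{L0L}.

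To prove simplicity, take a $\mathfrak{k}$-highest-weight vector $v$ of weight $\xi\neq\Lambda$ in $V(\Lambda)$ that is annihilated by $\mathfrak{k}_+$. Since $\Gamma$ ends with $E_{ia}\in\mathfrak{k}_+$, we get $\Gamma v=0$, so \lemref{lem: kinv} forces $(\Lambda-\xi,\Lambda+\xi+2\rho)=0$. Write $\xi=\Lambda-\eta$, where $\eta$ is a sum of $k\ge1$ roots in $\Phi^+_{\mathrm{nc}}$ whose shape is constrained by $S_k((\mathbb{C}^p)^*\otimes\mathbb{C}^{q|n})$. The condition then reads $2(\eta,\Lambda+\rho)=(\eta,\eta)$. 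I will show instead that $2(\eta,\Lambda+\rho)<(\eta,\eta)$ strictly, which gives $\gamma<0$ for every such $\xi$. The expansion
$$(\epsilon_i-\epsilon_j,\Lambda+\rho)=(\epsilon_i-\delta_\mu,\Lambda+\rho)-(\epsilon_j-\delta_\mu,\Lambda+\rho)$$
reduces all even noncompact pairings to odd ones. The left inequality in \eqref{L0L}, together with the chain in \propref{prop: ineqcon}(3) as exploited in the proof of \lemref{lem: U}, makes $(\Lambda+\rho,\epsilon_i-\delta_\mu)$ negative for $i\le p$. The right inequality makes $(\Lambda+\rho,\epsilon_j-\delta_\mu)$ positive for $j>p$.

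I expect this estimate to be the main obstacle. The quantity $(\eta,\eta)$ can be negative because of the odd directions, so I must use that $\eta$ is the weight of a $\mathfrak{k}$-highest weight in a supersymmetric power. Odd roots $\epsilon_i-\delta_\mu$ then occur at most once per pair $(i,\mu)$, in a Young-diagram pattern, and $(\eta,\eta)$ is bounded below by the standard hook-type formula. The strict inequalities \eqref{L0L} should then absorb the $\rho$-shifts $(\rho,\epsilon_i-\delta_\mu)=m+1-i-\mu$ from \eqref{eq: rhoform}.

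If this direct estimate proves too messy, I will fall back to a Jantzen/Shapovalov argument. A singular vector would have to be linked to $\Lambda$, and the inequalities \eqref{L0L} rule out every odd noncompact reflection. They also place $\Lambda$ in the holomorphic discrete series range for $\mathfrak{u}(p,q)$, which rules out the even noncompact ones.
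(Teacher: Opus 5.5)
\textbf{There is a genuine gap: the central estimate is never proved, and the route you sketch for it would fail.} Your deformation scaffolding is logically sound: simplicity of $V(\Lambda)$ on the convex region \eqref{L0L}, constancy of the signature of the form on each $V_k(\Lambda)$, and anchoring at an integral point via \thmref{thm: intuni}. But it only converts the inequality $2(\eta,\Lambda+\rho)<(\eta,\eta)$ into unitarity, and that inequality is exactly what you leave unproved. It is also the entire content of the paper's proof. Once it holds, even non-strictly, for every $\xi=\Lambda-\eta\in\Pi_{\mathfrak{k}}(\Lambda)$, \propref{prop: nscon} gives unitarity directly. So the deformation, the simplicity of $V(\Lambda)$ and the integral anchor are all unnecessary.

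\textbf{Why your sketch fails.} First, $\xi$ is a $\mathfrak{k}$-highest weight of $S_k\otimes L_0(\Lambda)$, not of $S_k=S_k((\mathbb{C}^p)^\ast\otimes\mathbb{C}^{q|n})$. So $\eta$ is merely some weight of $S_k$, and a hook-type bound for highest weights of $S_k$ does not apply. For example, take $p=2$ and $q=n=1$. Tensor the typical $\mathfrak{gl}_{1|1}$-part of $L_0(\Lambda)$ with the $\mathfrak{gl}_{1|1}$-component of $S_2$ of highest weight $\epsilon_3+\delta_1$. This produces a $\mathfrak{k}$-highest weight $\xi$ of $V_2(\Lambda)$ with $\eta=(\epsilon_1-\delta_1)+(\epsilon_2-\delta_1)$, which is not a highest weight of $S_2$, and $(\eta,\eta)=-2$.

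Second, negativity of the pairings $(\Lambda+\rho,\epsilon_i-\delta_\mu)$ for $i\le p$ cannot beat this. In the example you need $2(\Lambda+\rho,\eta)<-2$. With $\lambda_1=\lambda_2$ one gets $\gamma=2(\Lambda+\rho,\epsilon_1-\delta_1)$, so there is no slack at all.

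\textbf{The missing idea is quantitative.}
\begin{itemize}
\item Write $\eta=\sum_i\theta_i$ as in \eqref{eq: theta}, with $b_{i\mu}\in\{0,1\}$.
\item Then $(\theta_i,\theta_i)\ge0$, and the only negative terms in $(\eta,\eta)$ are the cross terms $-\sum_\mu b_{i\mu}b_{j\mu}$ for $i<j$. Hence $(\eta,\eta)\ge-2\sum_{i<j}\sum_\mu b_{i\mu}b_{j\mu}$.
\item These terms are absorbed by the $\rho$-shift $(\rho,\epsilon_j-\epsilon_1)=1-j$. It gives $(\Lambda+\rho,\epsilon_j-\delta_\mu)<-(j-1)$, which combines with $\sum_{i<j}b_{i\mu}\le j-1$.
\item Meanwhile, the even pairings $(\Lambda+\rho,\epsilon_i-\epsilon_k)$ are strictly negative, using both halves of \eqref{L0L}.
\end{itemize}
So it is the $\rho$-shift that absorbs the odd cross terms, not \eqref{L0L} that absorbs the $\rho$-shift. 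Tracking strictness in this argument does give $\gamma<0$ for all $\eta\neq0$. Your simplicity claim is therefore true, but only once this estimate is supplied.

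\textbf{Two smaller points.} Do not invoke \propref{prop: ineqcon}(3), since it assumes unitarity. The sign facts you need follow from \eqref{L0L}, \eqref{eq: k-high} and \eqref{eq: rhoform} alone. The Jantzen/Shapovalov fallback is too undeveloped to count as an argument.
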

\begin{proof}
By \lemref{lem: kmodL}, the $\mathfrak{k}$-module $L_0(\Lambda)$ is type-$1$ unitary, so,
according to \propref{prop: nscon}, it suffices to show that $(\Lambda-\xi, \Lambda+\xi+2\rho)\leq 0$
for all $\xi\in \Pi_{\mathfrak{k}}(\Lambda)$. 
As every $\xi\in \Pi_{\mathfrak{k}}(\Lambda)$ is of the form $\xi= \Lambda-\theta$ with
\begin{align}\label{eq: theta}
 \theta=\sum_{i=1}^p \theta_i,\qquad
 \theta_i=\sum_{k=p+1}^m a_{ik}(\epsilon_i-\epsilon_k) +\sum_{\mu=1}^n b_{i\mu} (\epsilon_i-\delta_{\mu}),
\end{align}
where $a_{ik}\in \mathbb{Z}_+$ and $b_{i\mu}\in \{0,1\}$, it follows that
\begin{align}\label{eq: nscon}
(\Lambda-\xi, \Lambda+\xi+2\rho)=2(\Lambda+\rho, \theta)-(\theta, \theta).
\end{align}
We now turn to estimating $(\Lambda+\rho, \theta)$ and $(\theta, \theta)$.
For each $i\in\{1,\ldots,p\}$, we have
\begin{align*}
 (\Lambda+\rho, \theta_i)
 &=\sum_{k=p+1}^m a_{ik}(\Lambda+\rho,\epsilon_i-\epsilon_k) 
 +\sum_{\mu=1}^n b_{i\mu} (\Lambda+\rho,\epsilon_i-\delta_{\mu}),
 \\[.1cm]
 (\theta_i, \theta_i )
 &=\sum_{k,\ell=p+1}^{m}(a_{ik}a_{i\ell}+\delta_{k\ell}a_{ik}a_{i\ell})
 +\sum_{k=p+1}^m\sum_{\nu=1}^n a_{ik}b_{i\nu}
 +\sum_{\ell=p+1}^m\sum_{\mu=1}^n b_{i\mu} a_{i\ell}
+\sum_{\substack{\mu,\nu=1\\ \mu\neq\nu}}^{n} b_{i\mu}b_{i\nu},
\end{align*}
where
\begin{align*}
 (\Lambda+\rho, \epsilon_i-\epsilon_k)
 &=(\Lambda+\rho, \epsilon_i-\epsilon_1) + (\Lambda+\rho, \epsilon_1-\delta_1) +(\Lambda+\rho, \delta_1-\delta_n)
 \\[.1cm]
 &\qquad+ (\Lambda+\rho, \delta_n-\epsilon_m)+ (\Lambda+\rho, \epsilon_m-\epsilon_k),
 \\[.1cm]
 (\Lambda+\rho,\epsilon_i-\delta_{\mu})
 &=(\Lambda+\rho,\epsilon_i-\epsilon_1) 
 + (\Lambda+\rho,\epsilon_1-\delta_1)
 + (\Lambda+\rho,\delta_1-\delta_{\mu}),
\end{align*}
while for all $i,j\in\{1,\ldots,p\}$ such that $i<j$,
$$
 (\theta_i, \theta_j)
 =\sum_{k=p+1}^{m}a_{ik}a_{jk} - \sum_{\mu=1}^n b_{i\mu}b_{j\mu}\geq- \sum_{\mu=1}^n b_{i\mu}b_{j\mu}.
$$
Using \eqref{eq: rhoform}, \eqref{eq: k-high}, and \eqref{L0L}, we have 
\begin{align*}
 &(\Lambda+\rho, \epsilon_i-\epsilon_k)<0, 
 \\[.1cm]
 &(\Lambda+\rho, \epsilon_i-\delta_{\mu})< (\Lambda+\rho,\epsilon_i-\epsilon_1)=(\Lambda,\epsilon_i-\epsilon_1)-(i-1)\leq -(i-1).
\end{align*}
It follows that
\begin{align}\label{eq: sum1}
 (\Lambda+\rho, \theta)=\sum_{i=1}^{p}(\Lambda+\rho, \theta_i)\leq -\sum_{i=1}^p\sum_{\mu=1}^nb_{i\mu}(i-1). 
\end{align}
As $(\theta_i, \theta_i )$ is seen to be non-negative, we also have
\begin{align}\label{eq: sum2}
 (\theta, \theta)
 = \sum_{i=1}^p\, (\theta_i, \theta_i ) 
 + 2 \sum_{1\leq i<j\leq p} (\theta_i, \theta_j)
 \geq - 2 \sum_{1\leq i<j\leq p} \sum_{\mu=1}^n b_{i\mu}b_{j\mu}. 
\end{align}
By combining \eqref{eq: nscon}, \eqref{eq: sum1}, and \eqref{eq: sum2}, it follows that
\begin{align*}
 (\Lambda-\xi, \Lambda+\xi+2\rho) 
 &\leq 2 \sum_{j=2}^p \sum_{\mu=1}^n b_{j\mu} \bigg(\sum_{i=1}^{j-1}b_{i\mu}-j+1\bigg).
\end{align*}
As $b_{i\mu}\in \{0,1\}$, the expression in brackets is non-positive for $j\geq 2$,
so $(\Lambda-\xi, \Lambda+\xi+2\rho)\leq0$ and the unitarity of $L(\Lambda)$ follows from \propref{prop: nscon}.
\end{proof}
\begin{proposition}\label{peop: type2suff}
Let $\Lambda\in \Dpqn$ with $(\Lambda+\rho, \epsilon_1-\delta_1)<0$ and
$$
 (\Lambda+\rho, \epsilon_m-\delta_\mu )=(\Lambda, \delta_\mu-\delta_n)=0
$$
for some $\mu\in \{2,\dots, n\}$. Then, $L(\Lambda)$ is unitary.
\end{proposition}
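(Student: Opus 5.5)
The plan is to argue exactly as in the proof of \propref{prop: type1suff}, via the criterion of \propref{prop: nscon}. First, the hypothesis $(\Lambda+\rho, \epsilon_m-\delta_\mu)=(\Lambda,\delta_\mu-\delta_n)=0$ for some $\mu\in\{2,\dots,n\}$ is precisely condition (2) of \lemref{lem: kmodL}. Hence $L_0(\Lambda)$ is a type-1 unitary $\mathfrak{k}$-module, and \propref{prop: nscon} applies. It then suffices to show that $(\Lambda-\xi,\Lambda+\xi+2\rho)\le 0$ for every $\xi\in\Pi_{\mathfrak{k}}(\Lambda)$.

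As before, I would write $\xi=\Lambda-\theta$ with $\theta=\sum_{i=1}^p\theta_i$ as in \eqref{eq: theta}, where $a_{ik}\in\mathbb{Z}_+$ and $b_{i\nu}\in\{0,1\}$. Then \eqref{eq: nscon} reduces the problem to bounding $2(\Lambda+\rho,\theta)-(\theta,\theta)$. The lower bound \eqref{eq: sum2} on $(\theta,\theta)$ is purely combinatorial, so it carries over verbatim. The only place where the hypothesis $(\Lambda+\rho,\epsilon_m-\delta_n)>0$ entered the earlier proof was in showing $(\Lambda+\rho,\epsilon_i-\epsilon_k)<0$ for $i\le p<k\le m$. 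The bound $(\Lambda+\rho,\epsilon_i-\delta_\nu)\le -(i-1)$, strict in fact, used only $(\Lambda+\rho,\epsilon_1-\delta_1)<0$ and $(\Lambda+\rho,\delta_1-\delta_\nu)=(\Lambda,\delta_1-\delta_\nu)+(1-\nu)\le 0$, which hold here as well.

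So the key step is to redo the chain for the even non-compact roots, routing through $\delta_\mu$ instead of $\delta_n$:
\begin{align*}
 (\Lambda+\rho,\epsilon_i-\epsilon_k)
 &=(\Lambda+\rho,\epsilon_i-\epsilon_1)+(\Lambda+\rho,\epsilon_1-\delta_1)+(\Lambda+\rho,\delta_1-\delta_\mu)
 \\
 &\qquad+(\Lambda+\rho,\delta_\mu-\epsilon_m)+(\Lambda+\rho,\epsilon_m-\epsilon_k).
\end{align*}
The individual terms behave as follows.
\begin{enumerate}
\item[(a)] The first term equals $(\Lambda,\epsilon_i-\epsilon_1)-(i-1)\le 0$.
\item[(b)] The second term is strictly negative by hypothesis.
\item[(c)] The third term equals $(\omega_\mu-\omega_1)+(1-\mu)\le 0$ by \eqref{eq: k-high} and \eqref{eq: rhoform}.
\item[(d)] The fourth term vanishes by hypothesis.
\item[(e)] The last term equals $(\lambda_m-\lambda_k)-(m-k)\le 0$.
\end{enumerate}
Hence $(\Lambda+\rho,\epsilon_i-\epsilon_k)<0$, and \eqref{eq: sum1} holds unchanged. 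Combining this with \eqref{eq: sum2} gives the same final estimate
$$
 (\Lambda-\xi,\Lambda+\xi+2\rho)\le 2\sum_{j=2}^p\sum_{\nu=1}^n b_{j\nu}\Big(\sum_{i=1}^{j-1}b_{i\nu}-j+1\Big)\le 0,
$$
and unitarity follows from \propref{prop: nscon}.

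I do not expect a serious obstacle, since the argument is a near-verbatim transcription of the typical case. The one point to check carefully is that the atypicality $(\Lambda+\rho,\epsilon_m-\delta_\mu)=0$ does not break the strict inequality needed for the even terms. It does not, because strictness is supplied by $(\Lambda+\rho,\epsilon_1-\delta_1)<0$ and all remaining terms are non-positive. It would also be worth checking that no $b_{i\nu}$ estimate silently used $\nu\le\mu$ or the typicality of the $\mathfrak{gl}_{q|n}$ part; by the computation above, no such dependence arises.
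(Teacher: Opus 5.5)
Your proposal is correct and follows the paper's proof. The paper likewise reuses the argument of \propref{prop: type1suff} and only re-justifies $(\Lambda+\rho,\epsilon_i-\epsilon_k)<0$. It does so via the single term $(\Lambda+\rho,\delta_1-\epsilon_m)=-(\omega_1-\omega_n)-(\mu-1)<0$, which is exactly your terms (c) and (d) combined.
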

\begin{proof}
The proof is the same as that of \propref{prop: type1suff}, except the justification for $(\Lambda+\rho, \epsilon_i-\epsilon_k)<0$. 
We now have
$$
 (\Lambda+\rho, \epsilon_i-\epsilon_k)
 =(\Lambda+\rho, \epsilon_i-\epsilon_1)
 +(\Lambda+\rho, \epsilon_1-\delta_1)
 +(\Lambda+\rho, \delta_1-\epsilon_m)
 +(\Lambda+\rho, \epsilon_m-\epsilon_k).
$$
By assumption, $(\Lambda+\rho, \epsilon_1-\delta_1)<0$, and by \eqref{eq: rhoform} and \eqref{eq: k-high},
$$
 (\Lambda+\rho, \epsilon_i-\epsilon_1)\leq 0, \qquad (\Lambda+\rho, \epsilon_m-\epsilon_k)\leq 0.
$$
The condition $(\Lambda+\rho, \epsilon_m-\delta_{\mu})=0$ is equivalent to $\lambda_m+\omega_{\mu}=\mu-1$. Consequently,
$$
 (\Lambda+\rho, \delta_1-\epsilon_m)= -\omega_1-\lambda_m= -\omega_1-(\mu-1-\omega_n)< 0,
$$
so $(\Lambda+\rho, \epsilon_i-\epsilon_k)<0$.
\end{proof}
\begin{lemma}\label{lem: intper}
Let $\Lambda\in \Ppqn$.
\begin{enumerate}
\item[{\rm (1)}] If $\lambda_m+\omega_n\geq n-1$ and there exists $i\in \{1,\dots, p\}$ such that 
$$
 \lambda_1=\dots =\lambda_i, \qquad \lambda_1+\omega_1= i-m,
$$
then $L(\Lambda_{(s+)})$ is unitary for every $s\in[0,1]$.
\vspace{0.15cm}
\item[{\rm (2)}] If $\omega_1=\dots=\omega_n$, $\lambda_m+\omega_{n}=0$, and there exists $j\in \{p, \dots, m-1\}$ such that 
$$
 \lambda_{j+1}=\dots =\lambda_m, \qquad \lambda_1+\omega_1\leq 1-j,
$$
then $L(\Lambda_{(s-)})$ is unitary for every $s\in[0,1]$.
\end{enumerate} 
\end{lemma}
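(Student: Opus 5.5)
The plan is to deduce the lemma from the integral case, \thmref{thm: intuni}, by a monotonicity argument in $s$ combined with the $\Gamma$-criterion of \propref{prop: nscon}. I treat part (1); part (2) is symmetric, with $\Lambda_{(s-)}$ shifting the first $p$ coordinates instead of the last $q$ even ones.

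First, the endpoints. At $s=0$ the weight is $\Lambda$ itself. It is integral and satisfies condition (1) of \thmref{thm: intuni}: the first clause holds via $i$. For the second clause, if $\lambda_m+\omega_n>n-1$ we are done directly. In the boundary case $\lambda_m+\omega_n=n-1$ we take $\mu=n$, for which the condition $\omega_\mu=\dots=\omega_n$ is vacuous. Hence $L(\Lambda)$ is unitary. At $s=1$ the weight $\Lambda_{(1+)}$ is again integral, with $\lambda_m+1+\omega_n>n-1$, so it is unitary by the same theorem. For $0<s\le1$ we have
$$
(\Lambda_{(s+)}+\rho,\epsilon_m-\delta_n)=\lambda_m+\omega_n+s+1-n>0,
$$
so \lemref{lem: kmodL} shows that $L_0(\Lambda_{(s+)})$ is type-1 unitary, and \propref{prop: nscon} applies for every $s\in[0,1]$.

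Second, the key computation. Write every $\mathfrak k$-highest weight $\xi$ of $V(\Lambda_{(s+)})$ as $\Lambda_{(s+)}-\theta$, with $\theta$ of the form \eqref{eq: theta}. As a $\mathfrak{k}$-module, $V(\Lambda_{(s+)})\cong \mathrm{U}(\mathfrak k_-)\otimes L_0(\Lambda_{(s+)})$, and shifting by $s$ on the $\mathfrak{gl}_q$ block only tensors $L_0$ with a one-dimensional $\mathfrak{gl}_q$-character. So the set of admissible $\theta$, together with the $\mathfrak k$-module structure of each $V_k$, is independent of $s$. By \eqref{eq: nscon},
$$
f_s(\theta):=2(\Lambda_{(s+)}+\rho,\theta)-(\theta,\theta)=f_0(\theta)-2s\sum_{i=1}^p\sum_{k=p+1}^m a_{ik}.
$$
Thus $f_s(\theta)$ is non-increasing in $s$. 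It is strictly decreasing unless $\theta$ is purely odd (all $a_{ik}=0$), and in that case it is independent of $s$.

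Third, I will run the induction from the proof of \propref{prop: nscon} directly on the graded pieces $V_k(\Lambda_{(s+)})$. Assume the form is positive semidefinite on $V_{k-1}$ with radical $R_{k-1}(s)$. On an isotypic $\mathfrak k$-component of type $\theta$ in $V_k$, the relation $\gamma\langle v,v\rangle=-\sum\langle E_{ia}v,E_{ia}v\rangle$ gives $\langle v,v\rangle\ge0$ whenever $f_s(\theta)<0$. When $f_s(\theta)=0$, the vectors of that type lie in the radical. There remain components with $f_s(\theta)>0$. For these, monotonicity gives $f_0(\theta)>0$. Since $L(\Lambda)$ is unitary, the necessity direction of \propref{prop: nscon} says no such $\theta$ occurs in $L(\Lambda)$, so at $s=0$ such components lie entirely in the radical of $V(\Lambda)$.

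The main obstacle is transporting this vanishing from $s=0$ to $s\in(0,1)$: I need to show that the $\theta$-component of $V_k(\Lambda_{(s+)})$ still lies in the radical. My first attempt will use the fact that the Gram matrix on each finite-dimensional isotypic component, in the $s$-independent PBW basis, has entries polynomial in $s$. The radical at $s$ is determined by the values $\langle E_{ia}v,E_{ia}v\rangle$, which are computed in $V_{k-1}$. So it suffices to show inductively that $E_{ia}v\in R_{k-1}(s)$ for $v$ of type $\theta$. I would check this through the lowering relations: the $\theta$-vectors are generated by the purely odd, $s$-independent singular combinations attached to the atypicality $(\Lambda+\rho,\epsilon_i-\delta_1)=0$, and these combinations do not involve $s$. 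If this fails, the fallback is an interpolation argument. Any $\theta$ with $f_s(\theta)>0$ for $s\in(0,1)$ also has $f_0(\theta)>0$. One can then show that the determinant of the Gram matrix on each $V_k$ is nonzero on $(0,1]$, which implies that its signature is constant there. Since the form is positive at $s=1$, it is then positive throughout $(0,1]$.
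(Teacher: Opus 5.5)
Your skeleton is the paper's: the integral endpoints $s=0,1$ are unitary by \thmref{thm: intuni}, $\gamma_s$ is affine in $s$, and \propref{prop: nscon} turns a sign condition on $\gamma_s$ into unitarity. The paper concludes by interpolating $\gamma_s\le 0$ between $s=0$ and $s=1$, treating $\xi_{(s)}=\Lambda_{(s)}-\theta$ as a single family of $\mathfrak k$-types for all $s$. Your formula $f_s(\theta)=f_0(\theta)-2s\sum_{i,k}a_{ik}$ shows that, under that identification, the endpoint $s=0$ alone would suffice. You rightly note that the necessity half of \propref{prop: nscon} only controls types occurring in the irreducible quotient. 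So one must show that every isotypic component with $f_s(\theta)>0$ lies in the radical of $V(\Lambda_{(s+)})$ for $0<s<1$. That is exactly the step you leave open (``my first attempt will use \dots'', ``if this fails, the fallback is \dots''). As it stands the proposal does not prove the lemma, and neither suggested route works as stated.

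\emph{The fallback is false.} The Gram determinant on $V_k(\Lambda_{(s+)})$ vanishes identically in $s$, because the atypicality $(\Lambda_{(s+)}+\rho,\epsilon_i-\delta_1)=0$ does not move with $s$. Take $p=q=n=1$ and $\Lambda=(-1,c;0)$ with $c\in\mathbb{Z}_+$ (part (1) with $i=1$), and put $t=c+s$. On the weight space of $V_1(\Lambda_{(s+)})$ spanned by $E_{31}v_0$ and $E_{21}E_{32}v_0$, the Gram matrix is
$$
 \begin{pmatrix} 1 & t\\ t & t^2\end{pmatrix},
$$
which is singular for every $t$, with null vector $tE_{31}v_0-E_{21}E_{32}v_0$. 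Passing to the quotient by the radical does not help. Continuity from $s=1$ needs the rank on each $V_k$ to be constant on $(0,1]$, i.e.\ $\dim L_k(\Lambda_{(1+)})=\dim L_k(\Lambda_{(s+)})$, which is the same matching problem in another form.

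\emph{The first attempt is unproved.} The example shows the singular combinations are not $s$-independent in your PBW basis. More importantly, the claim that all types with $f_s(\theta)>0$ come from $s$-persistent singular vectors describes the maximal submodule of $V(\Lambda_{(s+)})$ for generic $s$. That is the whole difficulty, and it is asserted, not proved.

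\emph{The $\mathfrak k$-types are not $s$-independent in general.} This fails in the boundary case $\lambda_m+\omega_n=n-1$ of part (1), which \propref{prop: U2} needs whenever $\lambda_m+\omega_n\in(n-1,n)$. Shifting only $\lambda_{p+1},\dots,\lambda_m$ is not a character of $\mathfrak{gl}_{q|n}$, since the supertrace also moves the $\omega_\mu$. Moreover, $L_0(\Lambda)$ is atypical at $s=0$ but typical for $s>0$. In the example with $c=0$, $L_0$ jumps from dimension $1$ to $2$, and the type $\theta=\epsilon_1-\delta_1$ is absent from $V(\Lambda)$. So $s=0$ says nothing about such new types, and your monotonicity runs the wrong way from $s=1$.

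(Minor: for $n=1$ the choice $\mu=n$ in your endpoint check is not allowed, since $\mu\in\{2,\dots,n\}$. Use condition (2) of \thmref{thm: intuni} with $j=m-1$ instead.)
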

\begin{proof}
For convenience, let $\Lambda_{(s)}$ denote $\Lambda_{(s+)}$ or $\Lambda_{(s-)}$. Recall from \eqref{eq: hwmod} that 
$$
 V(\Lambda_{(s)})= {\rm U}(\mathfrak{k}_-) \otimes V_0(\Lambda_{(s)}),
$$
and that its unique simple quotient is denoted by $L(\Lambda_{(s)})$.
By \lemref{lem: kmodL}, the $\mathfrak{k}$-module $V_0(\Lambda_{(s)})$ is type-1 unitary for every $s \in [0,1]$. 
Moreover, as a $\mathfrak{k}$-module, ${\rm U}(\mathfrak{k}_-) \cong S\bigl((\mathbb{C}^{p})^\ast \otimes \mathbb{C}^{q|n}\bigr)$,
and the latter is also type-1 unitary; see \cite{CLZ04} or \cite[Theorem 2.1]{Ser01}. 
It follows that $V(\Lambda_{(s)})$ is a type-1 unitary $\mathfrak{k}$-module, and hence is $\mathfrak{k}$-semisimple.

According to \lemref{lem: kinv}, the $\mathfrak{k}$-invariant $\Gamma$ acts on 
each simple $\mathfrak{k}$-submodule $L^{p,q|n}(\xi_{(s)})$ of $V(\Lambda_{(s)})$ as multiplication by the scalar 
$$
 \gamma_s=\frac{1}{2}(\Lambda_{(s)}-\xi_{(s)}, \Lambda_{(s)}+ \xi_{(s)}+2\rho).
$$
As $\xi_{(s)}= \Lambda_{(s)}-\theta$, where $\theta$ is of the form \eqref{eq: theta}, it follows that 
$$
 \gamma_s=(\Lambda_{(s)}+\rho,\theta)-(\theta, \theta),
$$ 
so $\gamma_s$ is an affine linear function of~$s$; that is, $\gamma_s= as+b$ for some constants $a,b\in \mathbb{R}$. If $s$ equals 0 or~1, 
then $\Lambda_{(s)}$ is an integral highest weight and \thmref{thm: intuni} implies that $L(\Lambda_{(s)})$ is unitary. 
In this case, by \propref{prop: nscon}, $\gamma_s\leq 0$. 
Since $\gamma_s$ depends linearly on~$s$, this means that $\gamma_s\leq 0$ for all $s\in [0,1]$. 
Another application of \propref{prop: nscon} yields the unitarity of $L(\Lambda_{(s)})$ for all $s\in [0,1]$. 
\end{proof}
\begin{proposition}\label{prop: U2}
Let $\Lambda\in \Dpqn$ with $(\Lambda+\rho, \epsilon_m-\delta_n)>0$, and suppose
there exists $i\in \{1,\dots,p\}$ such that
$$
 (\Lambda+\rho, \epsilon_i-\delta_1)=(\Lambda, \epsilon_i-\epsilon_1)=0.
$$
Then, $L(\Lambda)$ is unitary. 
\end{proposition}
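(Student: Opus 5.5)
The plan is to reduce \textbf{(U2)} to \lemref{lem: intper}(1) by tensoring with a one-dimensional module $\mathbb{C}_s$ and a suitable real shift. First I will translate the hypotheses into coordinates. By \eqref{eq: rhoform}, $(\Lambda+\rho,\epsilon_i-\delta_1)=\lambda_i+\omega_1+m-i$, so the hypotheses read
\[
 \lambda_1=\cdots=\lambda_i,\qquad \lambda_1+\omega_1=i-m,\qquad \lambda_m+\omega_n>n-1 .
\]
By \eqref{eq: k-high}, the differences $\lambda_a-\lambda_b$ within each of the blocks $\{1,\ldots,p\}$ and $\{p+1,\ldots,m\}$ are integers, and the same holds for $\omega_\mu-\omega_\nu$. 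Moreover, $\lambda_1+\omega_1\in\mathbb{Z}$. The only possibly non-integral quantity is therefore $t:=\lambda_m+\omega_n-\lfloor \lambda_m+\omega_n\rfloor\in[0,1)$, which measures the failure of integrality between the two blocks.

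Next I will tensor with $\mathbb{C}_{-\lambda_1}$, i.e.\ pass to $\Lambda^{(-\lambda_1)}$, so that $\lambda_1=0$. Then all $\lambda_a$ with $a\le p$ are integers, and $\omega_1=i-m$ is an integer, hence all $\omega_\mu$ are integers. This shift preserves unitarity in both directions, since $\mathbb{C}_s$ and $\mathbb{C}_{-s}$ are unitary and $L(\Lambda)\otimes\mathbb{C}_s\cong L(\Lambda^{(s)})$. It also preserves $\lambda_a+\omega_\mu$ and the differences $\lambda_a-\lambda_b$, so the hypotheses are unchanged. After this normalisation only the second block $\lambda_{p+1},\ldots,\lambda_m$ may be non-integral, and all its entries share the same fractional part $t$.

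If $t=0$, then $\Lambda\in\Ppqn$ and $\lambda_m+\omega_n>n-1$ gives $\lambda_m+\omega_n\ge n$. The hypotheses of \lemref{lem: intper}(1) then hold, and taking $s=0$ yields unitarity; alternatively, \thmref{thm: intuni}(1) applies directly. If $t>0$, set $\Lambda':=(\lambda_1,\ldots,\lambda_p,\lambda_{p+1}-t,\ldots,\lambda_m-t,\omega_1,\ldots,\omega_n)$. Then $\Lambda'\in\Ppqn$ and $\Lambda=\Lambda'_{(t+)}$ with $t\in(0,1)$. It remains to check that $\Lambda'$ satisfies the hypotheses of \lemref{lem: intper}(1). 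The conditions on the first block and on $\omega_1$ are untouched. For the remaining condition, $\lambda'_m+\omega_n=\lfloor\lambda_m+\omega_n\rfloor$. Since $\lambda_m+\omega_n>n-1$ is non-integral, its floor is at least $n-1$, which is exactly the non-strict inequality required in the lemma. Hence $L(\Lambda'_{(t+)})=L(\Lambda)$ is unitary.

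The main point needing care is this last inequality. The lemma was stated with $\geq n-1$ precisely so that the floor of a non-integer exceeding $n-1$ qualifies, and I will verify that boundary carefully. I will also confirm that $\Lambda'$ remains in $\Dpqn$, which holds because shifting a whole block by a constant preserves \eqref{eq: k-high}.
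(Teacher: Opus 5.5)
Your proof is correct and follows the paper's argument. The paper likewise twists by a one-dimensional module, passing to $\Upsilon=\Lambda^{(\omega_n)}$ and so normalising $\omega_n=0$ where you normalise $\lambda_1=0$; it then subtracts the fractional part $s$ of $\lambda_m+\omega_n$ from the second block to get an integral $\widetilde{\Upsilon}$ with $\widetilde{\Upsilon}_{(s+)}=\Upsilon$, and applies \lemref{lem: intper}(1). Your explicit check that $\lfloor\lambda_m+\omega_n\rfloor\geq n-1$ spells out the verification of the lemma's hypothesis, which the paper leaves implicit.
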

\begin{proof}
The assumptions imply that $\lambda_m+\omega_n>n-1$ and that 
there exists $i\in \{1, \dots, p\}$ such that $\lambda_1=\dots=\lambda_i$ and $\lambda_i+\omega_1=i-m$. 
Clearly, $\omega_{\mu}-\omega_n\in \mathbb{Z}_+$ for $\mu\in\{1,\ldots,n-1\}$. Also, for each $k\in\{1,\dots, p\}$, 
$$
 \lambda_k+\omega_n=(\lambda_k-\lambda_i)+ (\lambda_i+\omega_1)+ (\omega_1-\omega_n)\in \mathbb{Z}_-,
$$
while for each $k\in\{p+1, \dots, m\}$, 
$$
 \lambda_k+\omega_n= \lambda_k-\lambda_m + \lambda+\omega_n> n-1.
$$
Let 
\begin{align}\label{Yn}
 \Upsilon:=(\lambda_1+\omega_n, \dots, \lambda_m+\omega_n, \omega_1-\omega_n, \dots, \omega_{n-1}-\omega_n,0),
\end{align}
and set $s=\lambda_n+\omega_n-\lfloor{\lambda_n+\omega_n}\rfloor$. Then,
$$
 \widetilde{\Upsilon}:= (\lambda_1+\omega_n, \dots, \lambda_p+\omega_n, \lambda_{p+1}+\omega_n-s, \dots,\lambda_{m}+\omega_n-s, 
 \omega_1-\omega_n, \dots, \omega_{n-1}-\omega_n,0)
$$ 
is an integral weight satisfying $\widetilde{\Upsilon}_{(s+)}=\Upsilon$,
so by \lemref{lem: intper}, the simple $\mathfrak{gl}_{m|n}$-module $L(\Upsilon)$ is unitary. 
As $L(\Lambda)\cong L(\Upsilon)\otimes \mathbb{C}_{-\omega_n}$, it follows that $L(\Lambda)$ is unitary.
\end{proof}
\begin{proposition}\label{prop: U5}
Let $\Lambda\in \Dpqn$ with
$$
 (\Lambda, \epsilon_1-\delta_1)<1-j,\qquad
 (\Lambda, \epsilon_{j+1}-\epsilon_m)=(\Lambda+\rho, \epsilon_m-\delta_1)=(\Lambda, \delta_1-\delta_n)= 0,
$$
for some $j\in\{p,\dots, m-1\}$.
Then, $L(\Lambda)$ is unitary.
\end{proposition}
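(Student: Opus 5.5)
The proof will follow that of \propref{prop: U2}: shift by a one-dimensional module, reduce to a weight of the form $\Upsilon_{(s-)}$ with $\Upsilon$ integral, and invoke part (2) of \lemref{lem: intper}.

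First, unpack the hypotheses. Write $\Lambda=(\lambda_1,\dots,\lambda_m,\omega_1,\dots,\omega_n)$.
\begin{itemize}
\item The condition $(\Lambda,\delta_1-\delta_n)=0$ gives $\omega_1=\dots=\omega_n=:\omega$.
\item By \eqref{eq: rhoform}, $(\Lambda+\rho,\epsilon_m-\delta_1)=\lambda_m+\omega_1+m+1-m-1=\lambda_m+\omega$, so $\lambda_m=-\omega$.
\item The condition $(\Lambda,\epsilon_{j+1}-\epsilon_m)=0$ gives $\lambda_{j+1}=\dots=\lambda_m$.
\item Finally, $\lambda_1+\omega<1-j$.
\end{itemize}

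Next, pass to $\Upsilon:=\Lambda+\Lambda_{\omega}=(\lambda_1+\omega,\dots,\lambda_m+\omega,0,\dots,0)$. Then $L(\Lambda)\cong L(\Upsilon)\otimes\mathbb{C}_{-\omega}$, so it suffices to show $L(\Upsilon)$ is unitary. The entries $\lambda_{p+1}+\omega,\dots,\lambda_m+\omega$ differ from $\lambda_m+\omega=0$ by integers, by \eqref{eq: k-high}, so they are integers. The first $p$ entries $\lambda_k+\omega$ share a common fractional part, and they are all $<1-j\le 1-p\le 0$.

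Now set $s:=\lceil\lambda_1+\omega\rceil-(\lambda_1+\omega)\in[0,1)$ and define
$$
 \widetilde\Upsilon:=(\lambda_1+\omega+s,\dots,\lambda_p+\omega+s,\lambda_{p+1}+\omega,\dots,\lambda_m+\omega,0,\dots,0).
$$
This is an integral weight in $\Ppqn$ with $\widetilde\Upsilon_{(s-)}=\Upsilon$. It remains to check the hypotheses of \lemref{lem: intper}(2) for $\widetilde\Upsilon$ with the same $j$:
\begin{itemize}
\item the odd components are equal (all zero);
\item the last even component plus $\omega_n$ is $0$;
\item $\widetilde\lambda_{j+1}=\dots=\widetilde\lambda_m$;
\item $\widetilde\lambda_1+\widetilde\omega_1=\lceil\lambda_1+\omega\rceil\le 1-j$.
\end{itemize}
The last point is the only delicate one. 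Since $\lambda_1+\omega<1-j$ and $1-j$ is an integer, its ceiling is at most $1-j$, so the strict inequality becomes the non-strict one required by the lemma. \lemref{lem: intper}(2) then gives the unitarity of $L(\widetilde\Upsilon_{(s-)})=L(\Upsilon)$, and hence of $L(\Lambda)$.

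The main obstacle is bookkeeping rather than conceptual: the shift must be chosen so that the rounding goes in the correct direction (up, via the ceiling) while leaving the $q$-block and odd components untouched. One also needs $\widetilde\Upsilon$ to remain $\Phi^+_{\mathrm c}$-dominant, which holds because the shift is uniform on the first $p$ coordinates.
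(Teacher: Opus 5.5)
Your proof is correct and is essentially the paper's argument: the same twist $\Upsilon=\Lambda+\Lambda_{\omega_1}$ with $L(\Lambda)\cong L(\Upsilon)\otimes\mathbb{C}_{-\omega_1}$, an integral weight $\widetilde\Upsilon$ with $\widetilde\Upsilon_{(s-)}=\Upsilon$, and an appeal to \lemref{lem: intper}(2). Your rounding is the correct one: the paper sets $s=\lfloor\lambda_1+\omega_1\rfloor-(\lambda_1+\omega_1)\in(-1,0]$, which lies outside the lemma's range $s\in[0,1]$, whereas your choice $s=\lceil\lambda_1+\omega_1\rceil-(\lambda_1+\omega_1)\in[0,1)$ is in range and still gives $\widetilde\lambda_1+\widetilde\omega_1=\lceil\lambda_1+\omega_1\rceil\le 1-j$, because the hypothesis $\lambda_1+\omega_1<1-j$ is strict and $1-j$ is an integer.
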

\begin{proof}
For each $k\in\{1,\ldots,p\}$,
$$
 \lambda_k+\omega_1= \lambda_k-\lambda_1 +\lambda_1+\omega_1 <1-j
$$
while for each $k\in\{p+1,\ldots,j\}$, 
$$
 \lambda_k+\omega_1= \lambda_k-\lambda_m+\lambda_m+\omega_1=\lambda_k-\lambda_m \in \mathbb{Z}_+.
$$
Let 
\begin{align}\label{Y1}
 \Upsilon:=(\lambda_1+\omega_1, \dots, \lambda_{j}+\omega_1, 0, \dots, 0),
\end{align}
and set $s=\lfloor{\lambda_1+\omega_1}\rfloor- (\lambda_1+\omega_1)$. Then,
$$
 \widetilde{\Upsilon}:= 
 (\lambda_1+\omega_1+s,\dots, \lambda_p+\omega_1+s, \lambda_s+ \omega_1, \dots, \lambda_j+\omega_1, 0,\dots, 0)
$$
is an integral weight satisfying $\widetilde{\Upsilon}_{(s-)}=\Upsilon$, so by \lemref{lem: intper}, 
the simple $\mathfrak{gl}_{m|n}$-module $L(\Upsilon)$ is unitary. 
As $L(\Lambda)\cong L(\Upsilon)\otimes \mathbb{C}_{-\omega_1}$, it follows that $L(\Lambda)$ is unitary.
\end{proof}
\begin{proposition}\label{prop: U46}
Let $\Lambda\in \Dpqn$, and
suppose there exist $\mu\in \{2,\dots, n\}$ and $i\in \{1,\dots,p\}$ such that
$$
 (\Lambda+\rho, \epsilon_m-\delta_\mu )
 =(\Lambda, \delta_\mu-\delta_n)
 =(\Lambda+\rho, \epsilon_i-\delta_1)
 =(\Lambda, \epsilon_i-\epsilon_1)
 =0,
$$
or there exist $j\in\{p,\dots, m-1\}$ and $i\in \{1,\dots,p\}$ such that 
\begin{align*}
 (\Lambda, \epsilon_{j+1}-\epsilon_m)
 =(\Lambda+\rho, \epsilon_m-\delta_1)
 =(\Lambda, \delta_1-\delta_n)
 =(\Lambda, \epsilon_i-\epsilon_1)
 =0, 
 \quad\
 (\Lambda, \epsilon_i-\delta_1)=i-j.
\end{align*}
Then, $L(\Lambda)$ is unitary. 
\end{proposition}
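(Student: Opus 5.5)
The plan is to reduce both cases to integral highest weights by tensoring with a suitable one-dimensional unitary module $\mathbb{C}_s$, and then to invoke \thmref{thm: intuni}. The point is that in each case of \propref{prop: U46} the hypotheses force all the relevant differences of weight components to be integers. Consequently $\Lambda$ differs from an integral weight only by the uniform shift $\Lambda_s$.

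In the first case (condition \textbf{(U4)}), I will first rewrite the hypotheses using \eqref{eq: rhoform}. The condition $(\Lambda+\rho,\epsilon_m-\delta_\mu)=0$ becomes $\lambda_m+\omega_\mu=\mu-1$, with $\omega_\mu=\dots=\omega_n$. The condition $(\Lambda+\rho,\epsilon_i-\delta_1)=0$ becomes $\lambda_i+\omega_1=i-m$, with $\lambda_1=\dots=\lambda_i$. Combined with the dominance conditions \eqref{eq: k-high}, these give the following integrality statements:
\begin{itemize}
\item $\lambda_k+\omega_n\in\mathbb{Z}$ for $k\in\{p+1,\dots,m\}$, via $\lambda_k-\lambda_m$ together with $\lambda_m+\omega_n=\mu-1$;
\item $\lambda_k+\omega_n\in\mathbb{Z}$ for $k\in\{1,\dots,p\}$, via $\lambda_k-\lambda_i$, then $\lambda_i+\omega_1$, then $\omega_1-\omega_n$;
\item $\omega_\nu-\omega_n\in\mathbb{Z}_+$ for all $\nu$.
\end{itemize}
Hence $\Upsilon:=\Lambda^{(\omega_n)}$, which is the weight \eqref{Yn}, lies in $\Ppqn$. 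The shift by $\Lambda_s$ changes neither $(\Lambda+\rho,\epsilon_a-\delta_\nu)$ nor the $\mathfrak{k}$-internal differences, so $\Upsilon$ still satisfies the integral form of \textbf{(U4)}. This is exactly condition (1) of \thmref{thm: intuni}, using its second alternatives on both sides. Therefore $L(\Upsilon)$ is unitary. Since $L(\Lambda)\cong L(\Upsilon)\otimes\mathbb{C}_{-\omega_n}$ (highest weights match, and tensoring with a one-dimensional module preserves simplicity), and the tensor product of unitary modules is unitary, $L(\Lambda)$ is unitary.

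In the second case (condition \textbf{(U6)}), the procedure is the same with the shift $-\omega_1$ instead. The hypotheses give the following:
\begin{itemize}
\item $\omega_1=\dots=\omega_n$;
\item $\lambda_m+\omega_1=0$;
\item $\lambda_{j+1}=\dots=\lambda_m$;
\item $\lambda_1=\dots=\lambda_i$ and $\lambda_1+\omega_1=i-j$.
\end{itemize}
From these, $\lambda_k+\omega_1\in\mathbb{Z}$ for every $k$: for $k>p$ via $\lambda_k-\lambda_m$, and for $k\le p$ via $\lambda_k-\lambda_1$. So $\Upsilon:=\Lambda^{(\omega_1)}=(\lambda_1+\omega_1,\dots,\lambda_m+\omega_1,0,\dots,0)$ is integral. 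It satisfies condition (2) of \thmref{thm: intuni} with its second alternative $\lambda_1+\omega_1=i-j$. Unitarity of $L(\Lambda)\cong L(\Upsilon)\otimes\mathbb{C}_{-\omega_1}$ follows as before.

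The main obstacle is only bookkeeping. I need to check that \thmref{thm: intuni} is genuinely applicable to $\Upsilon$, which means confirming that $\Upsilon\in\Ppqn$ (integrality of all components, not just of the differences) and that each hypothesis transfers exactly under the shift. I also need to justify that a tensor product of unitary modules is unitary, by taking the product Hermitian form, which is positive-definite and contravariant.
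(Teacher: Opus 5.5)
Your proposal is correct and is essentially the paper's own proof: you shift by $\omega_n$ (resp.\ $\omega_1$) to reach the integral weight $\Upsilon$ of \eqref{Yn} (resp.\ \eqref{Y1}), apply condition (1) (resp.\ (2)) of \thmref{thm: intuni}, and recover $L(\Lambda)\cong L(\Upsilon)\otimes\mathbb{C}_{-\omega_n}$ (resp.\ $\otimes\,\mathbb{C}_{-\omega_1}$). Your integrality bookkeeping, together with the observation that $\Lambda_s$ is orthogonal to all roots so the defining equalities transfer unchanged, supplies the details that the paper leaves as ``straightforward to verify''.
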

\begin{proof}
If $\Lambda$ satisfies the first chain of equalities in the proposition, then there exists $\mu \in \{2, \dots, n\}$ 
such that $\omega_{\mu} = \dots = \omega_n$ and $\lambda_m + \omega_n = \mu - 1$. 
Additionally, there exists $i \in \{1, \dots, p\}$ such that $\lambda_1 = \dots = \lambda_i$ and $\lambda_1 + \omega_1 = i - m$. 
It is straightforward to verify that $\Upsilon$ in \eqref{Yn} lies in $\Ppqn$ and satisfies condition~(1) of \thmref{thm: intuni}. 
Consequently, the module $L(\Upsilon)$ is unitary. 
As $L(\Lambda) \cong L(\Upsilon) \otimes \mathbb{C}_{-\omega_n}$, it follows that $L(\Lambda)$ is unitary.

Similarly, the weight $\Upsilon$ in \eqref{Y1} lies in $\Ppqn$ and satisfies condition (2) of \thmref{thm: intuni},
so $L(\Upsilon)$ is unitary, and as $L(\Lambda)\cong L(\Upsilon)\otimes \mathbb{C}_{-\omega_1}$, it follows that $L(\Lambda)$ is unitary.
\end{proof}

\section{Further classifications of unitary modules}
\label{sec: lowest}

Using the classification of unitary $\mathfrak{gl}_{p+q|n}$-modules in \thmref{thm: main}, we now classify the
$\mathfrak{gl}_{p+q|n}$-modules that are unitary with respect to the dual star-operation.
We also present a classification of unitary $\mathfrak{gl}_{n|q+p}$-modules, and recall our notation $m=p+q$.

\subsection{Dual-unitary modules}

Dual to the $p,q$-dependent star-operation $\biggstar$ defined in \eqref{eq: star}, we have the star-operation $\dualstar$ defined by 
$$
 (E_{ab})^{\dualstar}:=(-1)^{[a]+[b]}(E_{ab})^{\medstar}, \qquad a,b\in\{1,\ldots,m+n\}.
$$
A $\mathfrak{gl}_{p+q|n}$-module is \textit{dual-unitary} if it carries a positive-definite Hermitian form satisfying \eqref{eq: Hermitian} 
with $\biggstar$ replaced with $\dualstar$. 

In preparation for our classification of dual-unitary $\mathfrak{gl}_{p+q|n}$-modules, we recall
the triangular decomposition \eqref{triangular}, and introduce
$$
 D_{p+q|n}^-:=-D_{p+q|n}^+.
$$
Mimicking \eqref{Kac}, for each $\Lambda\in D_{p+q|n}^-$, we define 
$$
 K^-(\Lambda):= {\rm U}(\mathfrak{gl}_{m|n}) \otimes_{\,{\rm U}(\mathfrak{g}_{0}\,\oplus\,\mathfrak{g}_{-1}\!)} \!L_{0}(-\Lambda).
$$
The corresponding simple \textit{lowest-weight module} $L^-(\Lambda)$ arises
as the quotient of $K^-(\Lambda)$ by its unique maximal proper submodule.

We now focus on \textit{admissible} $\mathfrak{gl}_{p+q|n}$-modules, 
i.e., those whose restriction to $\mathfrak{gl}_p\oplus \mathfrak{gl}_{q|n}$ decomposes into a 
direct sum of finite-dimensional simple modules with finite multiplicities. All such modules are weight modules,
and we have the following analogues of \lemref{lem: wtcon} and \propref{prop: highest} (the proofs are similar).
\begin{lemma}\label{lem: wtcon2}
Let $V$ be an admissible dual-unitary $\mathfrak{gl}_{p+q|n}$-module, 
and let $\lambda=\sum_{i=1}^{p+q}\lambda_i \epsilon_i + \sum_{\mu=1}^n \omega_{\mu}\delta_{\mu}$ be any weight of $V$. 
Then, $\lambda$ is real, and
$$
 \lambda_j \leq -\omega_{\mu}\leq \lambda_i
$$
for all $i\in\{1,\ldots,p\}$, $j\in\{p+1,\ldots,p+q\}$, and $\mu\in\{1,\ldots,n\}$. 
\end{lemma}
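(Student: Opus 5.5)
The argument follows the proof of \lemref{lem: wtcon}, now using the dual star-operation $\dualstar$ in place of $\biggstar$. Fix a nonzero weight vector $v\in V$ of weight $\lambda$.

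\emph{Reality of $\lambda$.} Each Cartan generator $E_{aa}$ is even, so $(E_{aa})^{\dualstar}=(E_{aa})^{\medstar}=E_{aa}$. Hence
$$
 \overline{\lambda(E_{aa})}\,\langle v,v\rangle=\langle E_{aa}v,v\rangle=\langle v,E_{aa}v\rangle=\lambda(E_{aa})\,\langle v,v\rangle .
$$
Since $\langle v,v\rangle>0$, every $\lambda(E_{aa})$ is real.

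\emph{The inequality $\lambda_j\le-\omega_\mu$ for $j\in\{p+1,\ldots,p+q\}$.} The odd generators pick up a sign under the dual star: $(E_{j,m+\mu})^{\dualstar}=-(E_{j,m+\mu})^{\medstar}=-E_{m+\mu,j}$. We also have $[E_{j,m+\mu},E_{m+\mu,j}]=E_{jj}+E_{m+\mu,m+\mu}$, which is an anticommutator. Expanding $\langle[E_{j,m+\mu},E_{m+\mu,j}]v,v\rangle$ and moving each left factor across the form with $\dualstar$ gives
$$
 (\lambda_j+\omega_\mu)\langle v,v\rangle=-\langle E_{m+\mu,j}v,E_{m+\mu,j}v\rangle-\langle E_{j,m+\mu}v,E_{j,m+\mu}v\rangle\le 0 .
$$
So $\lambda_j\le-\omega_\mu$. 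To be safe, I would use $(X^{\dualstar})^{\dualstar}=X$ to get $(E_{m+\mu,j})^{\dualstar}=-E_{j,m+\mu}$ explicitly.

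\emph{The inequality $-\omega_\mu\le\lambda_i$ for $i\in\{1,\ldots,p\}$.} Here $(E_{i,m+\mu})^{\medstar}=-E_{m+\mu,i}$, so the dual star flips the sign and $(E_{i,m+\mu})^{\dualstar}=E_{m+\mu,i}$. The same computation now gives
$$
 (\lambda_i+\omega_\mu)\langle v,v\rangle=\|E_{m+\mu,i}v\|^2+\|E_{i,m+\mu}v\|^2\ge0,
$$
so $\lambda_i\ge-\omega_\mu$. Combining the two cases yields $\lambda_j\le-\omega_\mu\le\lambda_i$.

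The only step that needs care is tracking the signs correctly. The defining formula multiplies the star by $(-1)^{[a]+[b]}$, which is $-1$ exactly for the odd generators $E_{i,m+\mu}$ and $E_{j,m+\mu}$. This single sign is what reverses the roles of the index sets $\{1,\ldots,p\}$ and $\{p+1,\ldots,m\}$ compared with \lemref{lem: wtcon}. Admissibility is not used beyond guaranteeing that $V$ is a weight module.
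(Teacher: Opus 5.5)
Your proof is correct and is exactly the argument the paper has in mind when it says the proof is similar to that of \lemref{lem: wtcon}: you repeat that computation, and the factor $(-1)^{[a]+[b]}$ in the dual star-operation flips the signs of the odd generators. This reverses the roles of $\{1,\ldots,p\}$ and $\{p+1,\ldots,m\}$, and your sign bookkeeping, including $(E_{m+\mu,j})^{\dualstar}=-E_{j,m+\mu}$ and $(E_{m+\mu,i})^{\dualstar}=E_{i,m+\mu}$, checks out.
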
  
\begin{proposition}\label{prop: lowest}
Let $V$ be an admissible dual-unitary simple $\mathfrak{gl}_{p+q|n}$-module. Then, $V$ is a lowest-weight module with lowest 
weight $\Lambda=\sum_{i=1}^{p+q}\lambda_i \epsilon_i + \sum_{\mu=1}^n \omega_{\mu}\delta_{\mu}$ satisfying 
$$
 \lambda_{p+1}\leq \cdots \leq \lambda_m \leq -\omega_n \leq \cdots \leq -\omega_1\leq \lambda_1 \leq \cdots \leq \lambda_p.
$$
\end{proposition}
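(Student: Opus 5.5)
The plan is to mirror the proof of \propref{prop: highest}, using \lemref{lem: wtcon2} in place of \lemref{lem: wtcon} and lowering operators in place of raising ones. Since $V$ is simple and admissible, it is a weight module, so we may fix a nonzero weight vector $v$ of weight $\lambda$; simplicity gives $V=\mathrm{U}(\mathfrak{gl}_{m|n})v$. We will produce a vector annihilated by all negative root vectors $E_{-\alpha}$, $\alpha\in\Phi^+$.

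\emph{Step 1 (non-compact even lowering).} For $i\in\{1,\ldots,p\}$ and $j\in\{p+1,\ldots,m\}$, the operator $E_{ji}$ shifts the weight so that $\lambda_i-\lambda_j$ decreases by $2$. By \lemref{lem: wtcon2}, every weight of $V$ satisfies $\lambda_i-\lambda_j\geq 0$. This quantity is bounded below on all weights, so repeated application of $E_{ji}$ must eventually give zero. These operators commute with one another, so we can apply them successively and obtain a nonzero vector $w$ with $E_{ji}w=0$ for all $\alpha\in\Phi^+_{\mathrm{nc},\bar0}$. Strictly, the $E_{ji}$ lower $\sum_{i\le p}\lambda_i-\sum_{j>p}\lambda_j$, which is bounded below by $0$. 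Moreover, since the weights differ from $\lambda$ by roots, this sum takes values in a discrete set, so the process terminates.

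\emph{Step 2 (odd non-compact lowering).} The odd lowering operators $E_{m+\mu,i}$, with $i\le p$, square to zero and commute with the $E_{ji}$ above (both $[E_{ji},E_{m+\mu,k}]$ and the reverse brackets vanish for $i,k\le p<j\le m$). Applying a maximal nonvanishing product of them to $w$ therefore gives $w'\neq0$ annihilated by $\mathfrak{k}_-$.

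\emph{Step 3 (compact part).} By admissibility, $\mathrm{U}(\mathfrak{k})w'$ is finite-dimensional. Inside it we choose a $\Phi^+_c$-lowest-weight vector $u=Xw'$ with $X\in\mathrm{U}(\mathfrak{k})$; such a vector exists because the $\mathfrak{k}$-weights are finite in number, so the negative compact root vectors act nilpotently. Since $[\mathfrak{k},\mathfrak{k}_-]\subseteq\mathfrak{k}_-$, the subspace of vectors killed by $\mathfrak{k}_-$ is $\mathfrak{k}$-stable, so $u$ is still killed by $\mathfrak{k}_-$. Hence $u$ is annihilated by all $E_{-\alpha}$, $\alpha\in\Phi^+$, and $V=\mathrm{U}(\mathfrak{gl}_{m|n})u$ by simplicity. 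Thus $V$ is a lowest-weight module.

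\emph{Step 4 (the inequalities).} For the lowest weight $\Lambda$, the chain $\lambda_{p+1}\le\cdots\le\lambda_m$, $\lambda_1\le\cdots\le\lambda_p$ and $\omega_1\geq\cdots\geq\omega_n$ (equivalently $-\omega_n\le\cdots\le-\omega_1$) comes from $\Lambda$ being a $\mathfrak{k}$-lowest weight of a finite-dimensional $\mathfrak{k}$-module. For the $\mathfrak{gl}_{q|n}$ part, finite-dimensionality of the $\mathfrak{gl}_q\oplus\mathfrak{gl}_n$ constituents applies. The middle links $\lambda_m\le-\omega_n$ and $-\omega_1\le\lambda_1$ come directly from \lemref{lem: wtcon2}.

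The main obstacle is Step 1: rigorously justifying termination using only bounds on weights, rather than the positivity computation. This is handled by the global lower bound from \lemref{lem: wtcon2} together with the discreteness of weight shifts.
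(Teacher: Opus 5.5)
Your argument follows the route the paper intends. The paper proves \propref{prop: lowest} only by remarking that it is similar to \propref{prop: highest}. Your Steps 1--3 are exactly that proof, with $\mathfrak{k}_+$ replaced by $\mathfrak{k}_-$ and \lemref{lem: wtcon2} used in place of \lemref{lem: wtcon}.

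Step 4, however, contains an error you should correct. You claim that the $\mathfrak{k}$-lowest weight satisfies $\omega_1\geq\cdots\geq\omega_n$, and that this is equivalent to $-\omega_n\le\cdots\le-\omega_1$. Both claims are false, and the two mistakes happen to cancel. The vector $u$ is annihilated by $E_{m+\mu+1,m+\mu}$ and generates a finite-dimensional module. The $\mathfrak{sl}_2$-triple with $h=E_{m+\mu,m+\mu}-E_{m+\mu+1,m+\mu+1}$ therefore forces $\omega_\mu-\omega_{\mu+1}\in\mathbb{Z}_-$. So a lowest weight is antidominant, $\omega_1\le\cdots\le\omega_n$, exactly as for $\lambda_1\le\cdots\le\lambda_p$ and $\lambda_{p+1}\le\cdots\le\lambda_m$. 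It is this condition that is equivalent to $-\omega_n\le\cdots\le-\omega_1$.

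There are two smaller points.

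\emph{Step 1.} The ``strictly'' remark is wrong as stated: $\sum_{i\le p}\lambda_i-\sum_{j>p}\lambda_j$ need not be non-negative when $p\neq q$. For example, take $p=1$, $q=2$ and $\lambda_1=\lambda_2=\lambda_3>0$, which is compatible with \lemref{lem: wtcon2}. You can simply drop the remark, since your pairwise argument already suffices: each $\lambda_i-\lambda_j\ge0$ drops by $2$ under $E_{ji}$, and the $E_{ji}$ mutually commute. Alternatively, use the potential $\sum_{i\le p<j}(\lambda_i-\lambda_j)=q\sum_{i\le p}\lambda_i-p\sum_{j>p}\lambda_j$. It is $\ge0$ on all weights and drops by $m$ at each step.

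\emph{Step 3.} Nilpotency of each negative compact root vector does not by itself produce a common null vector. Instead, take a nonzero weight vector of $\mathrm{U}(\mathfrak{k})w'$ whose weight minimises a linear functional that is positive on $\Phi^+_{\mathrm{c}}$. Such a vector exists because this finite-dimensional module has finitely many weights.

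With these fixes the proof is complete.
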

We recall our convention that the graded dual to the $\mathfrak{gl}_{p+q|n}$-module
$L(\Lambda)=\bigoplus_{k\in \mathbb{Z}_+}L_k(\Lambda)$ is defined by 
$$
 L^{\ast}(\Lambda):= \bigoplus_{k\in \mathbb{Z}^+} L_k^{\ast}(\Lambda).
$$
Similar to the finite-dimensional case (see \cite{GZ90a}), we have the following result. 
\begin{proposition}\label{prop: dual}
Let $\Lambda \in D_{p+q|n}^+$. Then, the simple $\mathfrak{gl}_{p+q|n}$-module $L(\Lambda)$ is unitary 
if and only if $L^*(\Lambda)$ is dual-unitary and $L^*(\Lambda)\cong L^-(-\Lambda)$.
\end{proposition}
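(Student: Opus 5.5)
The proof splits into two independent parts: identifying $L^*(\Lambda)$ as the lowest-weight module $L^-(-\Lambda)$, and transferring unitarity across duality via \propref{prop: dualstar}.

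\emph{Identification.} Let $L(\Lambda)=\bigoplus_{k}L_k(\Lambda)$ with each $L_k(\Lambda)$ finite-dimensional. The graded dual $L^*(\Lambda)=\bigoplus_k L_k^*(\Lambda)$ is a $\mathfrak{gl}_{p+q|n}$-module, and its weights are the negatives of those of $L(\Lambda)$.

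The dual vector $v_\Lambda^*$ to a highest-weight vector $v_\Lambda$ spans the weight space of weight $-\Lambda$. It is annihilated by the lowering operators: for a root vector $E_\alpha$ with $\alpha$ negative, $(E_\alpha v_\Lambda^*)(w)=\pm v_\Lambda^*(E_\alpha w)$, and this vanishes because $E_\alpha w$ never has weight $\Lambda$. Hence $v_\Lambda^*$ is a lowest-weight vector of weight $-\Lambda$.

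$L^*(\Lambda)$ is simple, since a proper submodule $N\subseteq L^*(\Lambda)$ would give a proper submodule $N^\perp\subseteq L(\Lambda)$. The graded pieces are finite-dimensional, so the graded double dual returns $L(\Lambda)$ and this argument is legitimate. A simple module generated by a lowest-weight vector of weight $-\Lambda$ is a quotient of the appropriate induced module, so it is the unique simple quotient $L^-(-\Lambda)$. The only bookkeeping needed is the convention that $K^-(-\Lambda)$ is induced from $L_0(\Lambda)$, which I will match with $L_0(\Lambda)^*$ as $\mathfrak{k}$- or $\mathfrak g_0$-modules. I will phrase this using the grading \eqref{eqn: decomp} so that the induction is over the parabolic $\mathfrak{k}\oplus\mathfrak{k}_-$. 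I would present this identification first, since it does not depend on unitarity.

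\emph{Forward direction.} If $L(\Lambda)$ is unitary, then \propref{prop: dualstar} applies directly. The form on $L(\Lambda)$ has orthogonal graded components, so a homogeneous orthonormal basis adapted to the grading exists. The induced form on the graded dual $L^*(\Lambda)$ is therefore positive-definite and contravariant for $\dualstar$.

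\emph{Reverse direction.} Apply the same argument to $W=L^*(\Lambda)$, viewed with $\dualstar$ as the base star-operation. The dual of $\dualstar$ is $\biggstar$ again, since $(-1)^{[X]}(-1)^{[X]}=1$. So $W^*$ is $\biggstar$-unitary.

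Then use $W^*\cong L(\Lambda)$. This holds via the canonical graded double-dual isomorphism, which is $\mathfrak g$-equivariant up to the sign conventions in the dual action. Those signs cancel on applying them twice, because $(-1)^{[X][v]}$ is applied with the same parity. The main obstacle is verifying this equivariance and that the orthonormal-basis construction in \propref{prop: dualstar} goes through for the infinite-dimensional graded case. For that I need $W$ to admit a homogeneous orthonormal basis compatible with the grading. I would obtain this by noting that the weight spaces, and hence the $\mathbb{Z}$-graded pieces, are mutually orthogonal by \lemref{lem: ortho}(1), and then applying Gram–Schmidt in each finite-dimensional piece.
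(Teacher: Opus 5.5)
Your proposal is correct and follows essentially the paper's route: the paper's one-line proof also just observes that $L^*(\Lambda)$ has lowest weight $-\Lambda$ and invokes \propref{prop: dualstar}, and your identification argument, the adapted homogeneous orthonormal basis, and the converse via \propref{prop: dualstar} applied to $\dualstar$ (whose dual is $\biggstar$) are the details the paper leaves implicit. One small fix: the plain evaluation map $\iota(v)(f)=f(v)$ satisfies $X\iota(v)=(-1)^{[X]}\iota(Xv)$, so the signs do not cancel as you claim; either use the Koszul-signed map $\iota(v)(f)=(-1)^{[v][f]}f(v)$, or note that $W^*$ is simple with highest weight $\Lambda$ and hence isomorphic to $L(\Lambda)$.
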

\begin{proof}
Since taking duals negates the weights, the lowest weight of $L^{*}(\Lambda)$ is $-\Lambda$. 
The result now follows from \propref{prop: dualstar}.
\end{proof}
The following classification is a consequence of  \thmref{thm: main} and \propref{prop: dual}.
\begin{theorem}
Let $\Lambda \in D_{p+q|n}^-$. Then, the simple $\mathfrak{gl}_{p+q|n}$-module $L^-(\Lambda)$ is dual-unitary 
if and only if $-\Lambda$ satisfies one of the conditions \emph{\textbf{(U1)}--\textbf{(U6)}}.
\end{theorem}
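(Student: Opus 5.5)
The plan is to reduce the statement to \thmref{thm: main} via \propref{prop: dual}, after identifying every simple lowest-weight module $L^-(\Lambda)$ with $\Lambda\in D^-_{p+q|n}$ as a graded dual of a highest-weight module. Set $\Lambda':=-\Lambda\in D^+_{p+q|n}$. The first step is to show that $L^*(\Lambda')\cong L^-(\Lambda)$ as $\mathfrak{gl}_{p+q|n}$-modules, irrespective of unitarity. The graded dual $L^*(\Lambda')=\bigoplus_k L_k^*(\Lambda')$ is simple: a proper nonzero submodule $W\subset L^*(\Lambda')$ is a sum of its intersections with the graded pieces, since the grading is the eigenspace decomposition of a central element of $\mathfrak{k}$. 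Its annihilator in $L(\Lambda')$, using the finite dimensionality of each $L_k(\Lambda')$, is then a proper nonzero submodule, which contradicts simplicity of $L(\Lambda')$.

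Next, the dual of the $\mathfrak{k}$-module $L_0(\Lambda')$ is a simple $\mathfrak{k}$-module with lowest weight $-\Lambda'=\Lambda$. It is annihilated by the raising operators in $\mathfrak{k}_-$, which lower the grading of $L(\Lambda')$ and so raise the grading in the dual. Hence $L^*(\Lambda')$ is a simple lowest-weight module of lowest weight $\Lambda$, and by uniqueness of the simple quotient of the induced module it is $L^-(\Lambda)$. I will need to check here that the induction in the definition of $K^-$ matches the parabolic used (the definition in the text uses $\mathfrak{g}_0\oplus\mathfrak{g}_{-1}$). Only uniqueness of a simple module with a given extremal weight is needed, and that follows from the standard argument.

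With the identification in hand, \propref{prop: dual} gives that $L(\Lambda')$ is unitary if and only if $L^-(\Lambda)$ is dual-unitary, and \thmref{thm: main} then yields the criterion in terms of $-\Lambda$ satisfying \textbf{(U1)}--\textbf{(U6)}. One direction, unitary implies dual-unitary, is immediate from \propref{prop: dualstar}.

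The main obstacle is the converse: if $L^-(\Lambda)$ is dual-unitary, then $L(-\Lambda)$ is unitary. For this I would apply \propref{prop: dualstar} to the star-operation $\dualstar$ itself, noting that the dual of $\dualstar$ is $\biggstar$, since $(-1)^{[X]}$ applied twice is the identity. Thus the graded dual of a dual-unitary module is $\biggstar$-unitary. The graded dual of $L^-(\Lambda)=L^*(\Lambda')$ is then $L(\Lambda')$, using that each graded piece is finite-dimensional, so double duals are canonical. This gives unitarity of $L(\Lambda')$ and completes the equivalence.
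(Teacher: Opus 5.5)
Your proposal is correct and follows the paper's route, which deduces the theorem from Theorem~\ref{thm: main} via Proposition~\ref{prop: dual}, that is, Proposition~\ref{prop: dualstar} together with $L^*(-\Lambda)\cong L^-(\Lambda)$; you also usefully spell out what the paper leaves implicit, namely the simplicity of the graded dual and the converse direction obtained by applying Proposition~\ref{prop: dualstar} to $\dualstar$, whose dual star-operation is $\biggstar$. One small slip: elements of $\mathfrak{k}_-$ raise the degree in $L(-\Lambda)$ and hence lower it in the graded dual, which is exactly why they annihilate $L_0^*(-\Lambda)$, so your sentence states both directions the wrong way round, although the conclusion you draw from it is right.
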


\subsection{Unitary modules over \texorpdfstring{\(\mathfrak{gl}_{n\mid q+p}\)}{gl(n|q+p)}}

Related to $\mathfrak{gl}_{n|q+p}$, we introduce
$$
 \tilde{\mathbf I}_{n|q+p}:=\tilde{\mathbf I}_n\cup \tilde{\mathbf I}_q\cup \tilde{\mathbf I}_p,
$$
where
$$
\tilde{\mathbf I}_n:=\{1,\dots,n\}, \qquad
\tilde{\mathbf I}_q:=\{n+1,\dots,n+q\}, \qquad
\tilde{\mathbf I}_p:=\{n+q+1,\dots,m+n\},
$$
noting that the indices in $\tilde{\mathbf I}_n$ are even, while those in $\tilde{\mathbf I}_q$ and $\tilde{\mathbf I}_p$ are odd. 
For $a,b\in \tilde{\mathbf I}_{n|q+p}$, we let $\widetilde E_{ab}$ denote the matrix unit of $\mathfrak{gl}_{n|q+p}$, 
so $[\widetilde E_{ab}]=[a]+[b]$. We also set
$$
\tilde a:=m+n+1-a, \qquad a\in \tilde{\mathbf I}_{n|q+p}.
$$
The standard Borel subalgebra of $\mathfrak{gl}_{n|q+p}$ is spanned by the elements $\widetilde E_{ab}$ with $a\le b$, 
and its Cartan subalgebra is spanned by the diagonal elements $\widetilde E_{aa}$, where $a\in \tilde{\mathbf I}_{n|q+p}$. 
Note that there is an even isomorphism of Lie superalgebras
$$
 \tau:\mathfrak{gl}_{n|q+p}\to \mathfrak{gl}_{p+q|n},\qquad
 \tau(\widetilde E_{ab})=E_{\tilde a,\tilde b},\qquad 
 a,b\in \tilde{\mathbf I}_{n|q+p}.
$$

We also define a sign function $\tilde s:\tilde{\mathbf I}_{n|q+p}\to\{\pm1\}$ by
$$
\tilde s(a):=
\begin{cases}
1, & a\in \tilde{\mathbf I}_n\cup \tilde{\mathbf I}_q,\\[0.1cm]
-1, & a\in \tilde{\mathbf I}_p,
\end{cases}
$$
and use this to define the $p,q$-dependent star-operation $\biggstar$ on $\mathfrak{gl}_{n|q+p}$ by
$$
 (\widetilde E_{ab})^{\medstar}
 :=\tilde s(a)\tilde s(b)\widetilde E_{ba}, \qquad 
 a,b\in \tilde{\mathbf I}_{n|q+p}.
$$
The corresponding dual star-operation $\dualstar$ is then given by
$$
 (\widetilde E_{ab})^{\dualstar}
 :=(-1)^{[a]+[b]}(\widetilde E_{ab})^{\medstar},\qquad 
 a,b\in \tilde{\mathbf I}_{n|q+p}.
$$
\begin{lemma}\label{lem: isostar}
The isomorphism $\tau\colon \mathfrak{gl}_{n|q+p}\to \mathfrak{gl}_{p+q|n}$ preserves the star-operations in the sense that,
for all $a,b\in \tilde{\bf I}_{n|q+p}$,
$$
 \tau\bigl((\widetilde{E}_{ab})^{\medstar}\bigr)=\tau(\widetilde{E}_{ab})^{\medstar}, \qquad 
 \tau\bigl((\widetilde{E}_{ab})^{\dualstar}\bigr)=\tau(\widetilde{E}_{ab})^{\dualstar}.
$$
\end{lemma}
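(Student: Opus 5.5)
The plan is a direct case check on matrix units, since both sides are anti-linear and determined by their values on the basis $\{\widetilde E_{ab}\}$. First I would record how $\tau$ moves the index blocks. The map $a\mapsto\tilde a=m+n+1-a$ sends $\tilde{\mathbf I}_n=\{1,\dots,n\}$ onto $\{m+1,\dots,m+n\}$, which is the odd block of $\mathfrak{gl}_{p+q|n}$. It sends $\tilde{\mathbf I}_q=\{n+1,\dots,n+q\}$ onto $\{p+1,\dots,p+q\}$ and $\tilde{\mathbf I}_p=\{n+q+1,\dots,m+n\}$ onto $\{1,\dots,p\}$. Hence the condition ``$\tilde a\le p$'' holds exactly when $a\in\tilde{\mathbf I}_p$, that is, exactly when $\tilde s(a)=-1$.

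For the first identity, I would compute both sides on $\widetilde E_{ab}$:
$$
\tau\bigl((\widetilde E_{ab})^{\medstar}\bigr)=\tilde s(a)\tilde s(b)\,E_{\tilde b\tilde a},\qquad
\tau(\widetilde E_{ab})^{\medstar}=(E_{\tilde a\tilde b})^{\medstar}=\pm E_{\tilde b\tilde a}.
$$
By \eqref{eq: star}, the sign on the right is $+$ exactly when $\tilde a,\tilde b$ lie on the same side of $p$. This happens exactly when $a,b$ are both in $\tilde{\mathbf I}_p$ or both outside it, which is exactly when $\tilde s(a)\tilde s(b)=1$. So the two signs agree in every case.

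For the dual identity, it suffices to compare parities, because $(E_{\tilde a\tilde b})^{\dualstar}=(-1)^{[\tilde a]+[\tilde b]}(E_{\tilde a\tilde b})^{\medstar}$. Since $\tau$ is even, $[\widetilde E_{ab}]=[E_{\tilde a\tilde b}]$, so $[a]+[b]\equiv[\tilde a]+[\tilde b]$ modulo $2$. This can also be checked directly: even indices of $\tilde{\mathbf I}_n$ go to odd indices, and odd indices of $\tilde{\mathbf I}_q\cup\tilde{\mathbf I}_p$ go to even indices, so the sum of the two parities is unchanged. The first identity then gives the second.

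Finally, I would note that both $X\mapsto\tau(X^{\medstar})$ and $X\mapsto\tau(X)^{\medstar}$ are anti-linear. Agreement on the basis therefore gives agreement on all of $\mathfrak{gl}_{n|q+p}$. There is no real obstacle here. The only point needing care is the bookkeeping in which block of $\mathfrak{gl}_{p+q|n}$ each $\tilde a$ lands, in particular that $\tilde{\mathbf I}_p$, not $\tilde{\mathbf I}_q$, maps onto the indices $\le p$.
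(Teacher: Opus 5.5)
Your proposal is correct and follows the paper's proof. The paper also checks the identity directly on matrix units, verifying that $\tilde s(a)\tilde s(b)$ reproduces the sign in \eqref{eq: star} for $E_{\tilde a\tilde b}$, and then gets the dual case from the parity shift $[\tilde a]=[a]+\bar 1$. Your explicit block bookkeeping (that $\tilde{\mathbf I}_p$ lands on $\{1,\dots,p\}$) and the remark about extending anti-linearly from the basis to all of $\mathfrak{gl}_{n|q+p}$ just spell out what the paper leaves implicit.
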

\begin{proof}
For the star-operation $\biggstar$, we have
$$
 \tau((\widetilde{E}_{ab})^{\medstar})= \tilde{s}(a)\tilde{s}(b) E_{\tilde{b}, \tilde{a}}= \tau(\widetilde{E}_{ab})^{\medstar}. 
$$
The relation for the dual star-operation follows similarly, using that $[\tilde{a}]= [a]+\bar{1}$. 
\end{proof}
We are only concerned with \textit{admissible} $\mathfrak{gl}_{n|q+p}$-modules, 
i.e., those whose restriction to $\mathfrak{gl}_{n|q}\oplus \mathfrak{gl}_p$ 
decomposes into a direct sum of finite-dimensional simple modules with finite multiplicities.
For such modules, it is straightforward to establish the following result.
\begin{proposition}\label{prop: lowest2}
Let $V$ be an admissible simple $\mathfrak{gl}_{n|q+p}$-module. 
\begin{enumerate}
\item[{\rm (1)}] If $V$ is  unitary, then $V$ is a lowest-weight module with lowest weight 
$\Lambda=\sum_{\mu=1}^n\omega_{\mu}\epsilon_{\mu} +\sum_{i=1}^{m}\lambda_i\delta_i$ satisfying 
$$
 \lambda_{q+1}\leq \cdots \leq \lambda_m \leq -\omega_n \leq \cdots \leq -\omega_1\leq \lambda_1 \leq \cdots \leq \lambda_q. 
$$
\item[{\rm (2)}] If $V$ is dual-unitary, then $V$ is a highest-weight module with highest weight 
$\Lambda=\sum_{\mu=1}^n\omega_{\mu}\epsilon_{\mu} +\sum_{i=1}^{m}\lambda_i\delta_i$ satisfying 
$$
  \lambda_{q}\leq \cdots \leq \lambda_1 \leq -\omega_1 \leq \cdots \leq -\omega_n\leq \lambda_m \leq \cdots \leq \lambda_{q+1}. 
$$
\end{enumerate}
\end{proposition}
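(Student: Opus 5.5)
The plan is to transport the problem to $\mathfrak{gl}_{p+q|n}$ along the isomorphism $\tau$, and then quote \lemref{lem: wtcon}, \propref{prop: highest}, \lemref{lem: wtcon2} and \propref{prop: lowest}. Let $V$ be an admissible simple $\mathfrak{gl}_{n|q+p}$-module, and let $V^\tau$ be the same space with $E_{ab}$ acting as $\tau^{-1}(E_{ab})$.

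First I check that the structures match. Under $\tau$ we have $\tau(\widetilde E_{ab})=E_{\tilde a\tilde b}$. The index set $\tilde{\mathbf I}_n$ goes to $\{m+1,\dots,m+n\}$, $\tilde{\mathbf I}_q$ goes to $\{p+1,\dots,m\}$, and $\tilde{\mathbf I}_p$ goes to $\{1,\dots,p\}$. Hence $\tau(\mathfrak{gl}_{n|q}\oplus\mathfrak{gl}_p)=\mathfrak{gl}_{q|n}\oplus\mathfrak{gl}_p=\mathfrak{k}$, so $V$ is admissible exactly when $V^\tau$ is. By \lemref{lem: isostar}, $\tau$ intertwines $\biggstar$ with $\biggstar$ and $\dualstar$ with $\dualstar$. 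A positive-definite form on $V$ contravariant for one star-operation is therefore contravariant on $V^\tau$ for the corresponding one. Thus $V$ is unitary (respectively dual-unitary) if and only if $V^\tau$ is, and $V^\tau$ is still simple.

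Next I translate Borel subalgebras and weights. Since $a\mapsto\tilde a$ reverses order, $\tau$ sends the standard Borel of $\mathfrak{gl}_{n|q+p}$ to the opposite Borel of $\mathfrak{gl}_{p+q|n}$. Consequently highest-weight vectors of $V$ are lowest-weight vectors of $V^\tau$, and conversely. On weights, a weight $\Lambda=\sum_\mu\omega_\mu\epsilon_\mu+\sum_i\lambda_i\delta_i$ of $V$ corresponds to the $\mathfrak{gl}_{p+q|n}$-weight whose coefficient at $E_{\tilde a\tilde a}$ is the $\Lambda$-value at $\widetilde E_{aa}$. Concretely, the $\epsilon_\mu$-coordinate $\omega_\mu$ goes to $\delta_{n+1-\mu}$. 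The $\delta_i$-coordinate $\lambda_i$ with $i\le q$ goes to the even index $m+1-i\in\{p+1,\dots,m\}$, and with $i>q$ it goes to $m+1-i\in\{1,\dots,p\}$.

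For (1), I apply \propref{prop: highest} to $V^\tau$. It says $V^\tau$ is a highest-weight module whose highest weight $(\lambda'_1,\dots,\lambda'_m,\omega'_1,\dots,\omega'_n)$ satisfies $\lambda'_{p+1}\ge\cdots\ge\lambda'_m\ge-\omega'_n\ge\cdots\ge-\omega'_1\ge\lambda'_1\ge\cdots\ge\lambda'_p$. That vector is a lowest-weight vector of $V$. I then substitute $\lambda'_{m+1-i}=\lambda_i$ and $\omega'_{n+1-\mu}=\omega_\mu$ and reverse the chain. This gives $\lambda_{q+1}\le\cdots\le\lambda_m\le-\omega_n\le\cdots\le-\omega_1\le\lambda_1\le\cdots\le\lambda_q$: the block $\lambda'_{p+1},\dots,\lambda'_m$ becomes $\lambda_q,\dots,\lambda_1$, and $\lambda'_1,\dots,\lambda'_p$ becomes $\lambda_m,\dots,\lambda_{q+1}$. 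For (2), \propref{prop: lowest} applied to $V^\tau$ gives a lowest weight with the reversed inequalities, which is a highest weight of $V$, and the same substitution produces the stated chain.

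The main obstacle is purely bookkeeping: keeping the reversal of indices and the direction of the inequalities consistent. One point to confirm is that there is no sign change on the diagonal. Since $\tau(\widetilde E_{aa})=E_{\tilde a\tilde a}$ with no sign, a weight is not negated, only permuted. The apparent parity swap has no effect on weights, because the bilinear form $(-,-)$ plays no role in these chains.
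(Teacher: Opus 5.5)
Your proof is correct. You transport $V$ along $\tau$ using \lemref{lem: isostar}, which also carries $\mathfrak{gl}_{n|q}\oplus\mathfrak{gl}_p$ onto $\mathfrak{k}$ and the standard Borel onto the opposite one; you then apply \propref{prop: highest} and \propref{prop: lowest}, and the substitution $\lambda'_{m+1-i}=\lambda_i$, $\omega'_{n+1-\mu}=\omega_\mu$ gives exactly both chains. The paper writes out no proof and only calls the result straightforward, but it uses this same $\tau$-correspondence via $\Lambda^{\tau}$ immediately afterwards, so your argument is essentially the intended one.
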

As a consequence of \lemref{lem: isostar}, there is a bijective correspondence between the unitary modules over $\mathfrak{gl}_{n|q+p}$ 
and those over $\mathfrak{gl}_{p+q|n}$. More precisely, let $L(\Lambda)$ be a unitary simple $\mathfrak{gl}_{p+q|n}$-module with 
highest weight $\Lambda=(\lambda_1,\dots, \lambda_p, \lambda_{p+1}, \dots, \lambda_m, \omega_1, \dots, \omega_n)$, 
and let $\pi: \mathfrak{gl}_{p+q|n}\to\mathrm{End}_{\mathbb{C}}(L(\Lambda))$ denote the corresponding linear representation. 
Then, when viewed as a $\mathfrak{gl}_{n|q+p}$-module via $\pi\circ \tau$, $L(\Lambda)$ is a 
unitary simple $\mathfrak{gl}_{n|q+p}$-module with lowest weight 
$$
 \Lambda^{\tau}:=(\omega_n, \dots, \omega_1, \lambda_m, \dots, \lambda_{p+1}, \lambda_p, \dots, \lambda_1). 
$$
Indeed, it is readily verified that $\Lambda^{\tau}$ satisfies the lowest-weight conditions in part (1) of \propref{prop: lowest2}.  
Exchanging the even and odd indices thus interchanges highest-weight unitary $\mathfrak{gl}_{p+q|n}$-modules and 
lowest-weight unitary $\mathfrak{gl}_{n|q+p}$-modules. A similar correspondence holds for the dual-unitary modules.
\begin{theorem}
The simple lowest-weight $\mathfrak{gl}_{n|q+p}$-module $L^-(\Lambda)$ is unitary if and only if $\Lambda^{\tau}$
satisfies one of the conditions \emph{\textbf{(U1)}--\textbf{(U6)}}.
\end{theorem}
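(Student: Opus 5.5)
The plan is to transport \thmref{thm: main} along the isomorphism $\tau$ of \lemref{lem: isostar}, so that no new unitarity analysis is needed. Let $W$ be a simple lowest-weight $\mathfrak{gl}_{n|q+p}$-module with lowest weight $\Lambda$, and let $\pi$ denote its representation. We pull it back along $\tau^{-1}$ and regard it as the $\mathfrak{gl}_{p+q|n}$-module $\pi\circ\tau^{-1}$. Since $\tau$ is an even isomorphism intertwining the two star-operations $\biggstar$, a Hermitian form on the underlying space is contravariant for $\mathfrak{gl}_{n|q+p}$ exactly when it is contravariant for $\mathfrak{gl}_{p+q|n}$. Positive-definiteness does not involve the algebra at all. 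Hence $W$ is unitary if and only if its transport is unitary. The transport is also simple, and admissibility is preserved, because $\tau$ carries $\mathfrak{gl}_{n|q}\oplus\mathfrak{gl}_p$ onto $\mathfrak{k}=\mathfrak{gl}_p\oplus\mathfrak{gl}_{q|n}$.

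The next step is to check what $\tau$ does to Borel subalgebras and weights. The map $a\mapsto\tilde a$ reverses the order of indices. It therefore sends the standard Borel subalgebra $\{\widetilde E_{ab}\,|\,a\le b\}$ to the opposite Borel subalgebra $\{E_{cd}\,|\,c\ge d\}$ of $\mathfrak{gl}_{p+q|n}$. So a lowest-weight vector of $W$ becomes a highest-weight vector of the transport. On the Cartan subalgebra, $\widetilde E_{aa}\mapsto E_{\tilde a\tilde a}$. This identifies the lowest weight $\Lambda$ of $W$ with the highest weight $\Lambda^{\tau}$ of the transport, with coordinates reversed exactly as in the displayed formula for $\Lambda^{\tau}$ (this is the inverse of the correspondence $\Lambda\mapsto\Lambda^\tau$ described before the theorem). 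The transport is therefore $L(\Lambda^{\tau})$, and the argument is reversible, so we get a bijection between simple lowest-weight $\mathfrak{gl}_{n|q+p}$-modules and simple highest-weight $\mathfrak{gl}_{p+q|n}$-modules.

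It remains to reduce to \thmref{thm: main}. That theorem requires $\Lambda^{\tau}\in\Dpqn$. If $W$ is unitary, then so is $L(\Lambda^\tau)$, and \propref{prop: highest} and \propref{prop: ineqcon} give $\Lambda^\tau\in\Dpqn$. Part (1) of \propref{prop: lowest2} gives the same constraint directly. Conversely, if $\Lambda^\tau$ satisfies one of \textbf{(U1)}--\textbf{(U6)}, it lies in $\Dpqn$ by the hypotheses of \lemref{lem: U}. In either direction, $L^-(\Lambda)$ is unitary if and only if $L(\Lambda^\tau)$ is unitary, which by \thmref{thm: main} happens if and only if $\Lambda^\tau$ satisfies one of \textbf{(U1)}--\textbf{(U6)}.

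The main obstacle is the bookkeeping in the second step. One must confirm that $\tau$ really does swap the standard Borel subalgebra with its opposite. One must also confirm that the parity flip $[\tilde a]=[a]+\bar 1$ does not alter the sign conventions for $\biggstar$: the sign is determined by $\tilde s(a)\tilde s(b)$, which matches the $p$-block condition in \eqref{eq: star}. Finally, one must check that the coordinate reversal defining $\Lambda^\tau$ matches the identification of Cartan weights, including the relabelling of $\epsilon$ and $\delta$ between the two algebras. I would verify these directly on the matrix units $\widetilde E_{ab}$, $a\le b$, and on $\widetilde E_{aa}$.
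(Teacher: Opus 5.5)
Your proposal is correct and follows the paper's own route: you transport the module along the even isomorphism $\tau$ of \lemref{lem: isostar}, which intertwines the star-operations, exchanges the standard Borel subalgebra with its opposite and reverses the weight coordinates. Lowest-weight unitary $\mathfrak{gl}_{n|q+p}$-modules therefore correspond exactly to highest-weight unitary $\mathfrak{gl}_{p+q|n}$-modules with highest weight $\Lambda^{\tau}$, and \thmref{thm: main} applies.

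One small correction: \propref{prop: ineqcon} already assumes $\Lambda\in\Dpqn$, so it cannot be used to establish $\Lambda^{\tau}\in\Dpqn$. Justify that membership instead by the admissibility of the transported module (its highest-weight vector generates a finite-dimensional $\mathfrak{k}$-module), together with the reality of weights from \lemref{lem: wtcon}.
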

Since taking the dual interchanges unitary and dual-unitary $\mathfrak{gl}_{n|q+p}$-modules,
we similarly have the following classification of dual-unitary simple $\mathfrak{gl}_{n|q+p}$-modules. 
\begin{theorem}
The simple highest-weight $\mathfrak{gl}_{n|q+p}$-module $L(\Lambda)$ is dual-unitary if and only if 
$-\Lambda^{\tau}$ satisfies one of the conditions \emph{\textbf{(U1)}--\textbf{(U6)}}.
\end{theorem}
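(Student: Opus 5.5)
The plan is to transport the classification of dual-unitary highest-weight $\mathfrak{gl}_{p+q|n}$-modules through $\tau$, as the previous theorem did for unitary modules. The chain of equivalences is: $L(\Lambda)$ is dual-unitary over $\mathfrak{gl}_{n|q+p}$, if and only if its dual is unitary over $\mathfrak{gl}_{n|q+p}$, if and only if (via $\tau$) a corresponding lowest-weight $\mathfrak{gl}_{p+q|n}$-module is unitary, if and only if a weight built from $-\Lambda^\tau$ satisfies one of \textbf{(U1)}--\textbf{(U6)}.

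\textbf{Step 1: dualise.} Let $L(\Lambda)$ be an admissible simple highest-weight $\mathfrak{gl}_{n|q+p}$-module. It is $\mathbb{Z}$-graded with finite-dimensional components, by the same $\mathfrak{k}$-type grading as in \eqref{eq: VLZ}, now for $\mathfrak{gl}_{n|q}\oplus\mathfrak{gl}_p$. Since $(X^{\dualstar})^{\dualstar}=X^{\medstar}$, \propref{prop: dualstar} gives the following. $L(\Lambda)$ is dual-unitary if and only if its graded dual $L(\Lambda)^*$ is unitary. Apply the proposition in both directions and use $(V^*)^*\cong V$ for graded duals. Moreover, $L(\Lambda)^*$ is simple with lowest weight $-\Lambda$, so $L(\Lambda)^*\cong L^-(-\Lambda)$. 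This is the argument of \propref{prop: dual} with the roles of highest and lowest weights exchanged.

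\textbf{Step 2: apply the previous theorem.} The preceding theorem says $L^-(-\Lambda)$ is unitary if and only if $(-\Lambda)^\tau$ satisfies one of \textbf{(U1)}--\textbf{(U6)}. The map $\Lambda\mapsto\Lambda^\tau$ only reverses and reorders coordinates, so it is linear and $(-\Lambda)^\tau=-\Lambda^\tau$. Combining this with Step 1 gives the claim.

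\textbf{Main obstacle: index bookkeeping.} The step needing care is to make $\Lambda^\tau$ precise when $\Lambda$ is a lowest (respectively highest) weight of $\mathfrak{gl}_{n|q+p}$. It should be the $\mathfrak{gl}_{p+q|n}$-weight $\Lambda\circ\tau^{-1}$ on the Cartan subalgebra, written in the order $(\lambda_1,\dots,\lambda_m,\omega_1,\dots,\omega_n)$. Concretely, $\tau$ sends $\widetilde E_{aa}$ to $E_{\tilde a\tilde a}$, so a weight $\nu$ of the $\mathfrak{gl}_{n|q+p}$-module corresponds to the $\mathfrak{gl}_{p+q|n}$-weight $\nu\circ\tau^{-1}$. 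The map $\tau$ sends the standard Borel of $\mathfrak{gl}_{n|q+p}$ to the opposite Borel of $\mathfrak{gl}_{p+q|n}$, because $a\le b$ if and only if $\tilde a\ge\tilde b$. Hence highest-weight modules on one side become lowest-weight modules on the other, and this is compatible with negation.

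One must also check that $-\Lambda^\tau$ lies in $D^+_{p+q|n}$ whenever $\Lambda$ satisfies the inequalities of \propref{prop: lowest2}(2). Reversing those inequalities and negating yields exactly the chain in \propref{prop: highest}, so \thmref{thm: main} applies and the conditions are well posed.

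Finally, admissibility must be preserved throughout. The subalgebra $\tau(\mathfrak{gl}_{n|q}\oplus\mathfrak{gl}_p)$ is exactly $\mathfrak{gl}_{q|n}\oplus\mathfrak{gl}_p=\mathfrak{k}$, and dualising preserves both finite-dimensionality and the finite multiplicity of $\mathfrak{k}$-types. So no admissible module is lost or gained along the chain.
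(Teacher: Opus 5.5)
Your proposal is correct and follows the route the paper indicates: dualise within $\mathfrak{gl}_{n|q+p}$ via \propref{prop: dualstar} to exchange dual-unitarity and unitarity, identify $L(\Lambda)^*\cong L^-(-\Lambda)$, and then apply the preceding theorem on unitary lowest-weight $\mathfrak{gl}_{n|q+p}$-modules together with $(-\Lambda)^\tau=-\Lambda^\tau$. One cosmetic slip: applying $\dualstar$ twice returns $X$, not $X^{\medstar}$; what you need, and clearly mean, is that the dual star-operation of $\dualstar$ is $\biggstar$, since $(-1)^{[X]}X^{\dualstar}=X^{\medstar}$.
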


\subsection{Unitary modules over \texorpdfstring{\(\mathfrak{gl}_{p+q|r+s}\)}{gl(p+q|r+s)}}

Let $m=p+q$ and $n=r+s$. It is well-known that, if $pq\neq 0$ and $rs\neq 0$, then the real form $\mathfrak{su}(p,q|r,s)$ admits only 
trivial unitary simple modules; see \cite[Proposition 1]{GV19} and \cite[Lemma 2.1]{FN91}. 
For $\mathfrak{u}(p,q| r,s)$ with $pq\neq0$ and $rs\neq0$, the only unitary simple modules are 1-dimensional; see \propref{prop:pq} below.

Indeed, let
$$
 {\bf I}_{p+q|r+s}:={\bf I}_p\cup {\bf I}_q\cup {\bf I}_r\cup \mathbf{I}_s,
$$
where
\begin{align*}
 &{\bf I}_p:= \{1, \dots, p\}, \qquad\qquad\qquad\, {\bf I}_q:= \{p+1, \dots, p+q\}, 
 \\[.1cm] 
 &{\bf I}_r:= \{m+1, \dots, m+r\}, \qquad {\bf I}_s:= \{m+r+1, \dots, m+n\},
\end{align*}
noting that ${\bf I}_p$ and ${\bf I}_q$ are even, while ${\bf I}_r$ and $\mathbf{I}_s$ are odd. 
We also define the function $s: {\bf I}_{p+q|r+s}\to\{\pm 1\}$ by 
$$
 s(a):= \begin{cases}
  1, & a\in {\bf I}_p\cup \mathbf{I}_s, \\[.1cm]
  -1, & a\in {\bf I}_q\cup {\bf I}_r,
 \end{cases}
$$
and let the star-operation $\biggstar$ and its dual star-operation $\dualstar$ 
on $\mathfrak{gl}_{p+q|r+s}$ act as
$$
 (E_{ab})^{\medstar}:= s(a)s(b) E_{ba},\qquad
 (E_{ab})^{\dualstar}:=(-1)^{[a]+[b]} s(a)s(b) E_{ba},
$$
where $a,b\in {\bf I}_{p+q|r+s}$.
Accordingly, we have unitary $\mathfrak{gl}_{p+q|r+s}$-modules and dual-unitary $\mathfrak{gl}_{p+q|r+s}$-modules. 
Any such module is said to be \textit{admissible} if its restriction to the subalgebra 
$\mathfrak{gl}_{p}\oplus \mathfrak{gl}_{q|r}\oplus \mathfrak{gl}_s$ decomposes into a direct sum of finite-dimensional simple modules 
with finite multiplicities. All such modules are weight modules.
\begin{proposition}\label{prop:pq}
If $pq\neq 0$ and $rs\neq 0$, any admissible unitary or dual-unitary simple $\mathfrak{gl}_{p+q|r+s}$-module is $1$-dimensional. 
\end{proposition}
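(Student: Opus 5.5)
The plan is to mimic the proof of \lemref{lem: wtcon}: derive inequalities between weight components from the sign of norms, and show that with both $pq\neq0$ and $rs\neq0$ these inequalities collapse to equalities. Then a version of the argument of \propref{prop: highest} forces every root vector to act trivially.

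Let $V$ be an admissible unitary simple $\mathfrak{gl}_{p+q|r+s}$-module and $v$ a nonzero weight vector of weight $\lambda=\sum_a\lambda_a\epsilon_a$. As in \lemref{lem: wtcon}, $\lambda$ is real. For an even index $a$ and an odd index $b$ we have $[E_{ab},E_{ba}]=E_{aa}+E_{bb}$ and $(E_{ab})^{\medstar}=s(a)s(b)E_{ba}$. Expanding $\langle [E_{ab},E_{ba}]v,v\rangle$ exactly as in that proof gives
$$
 (\lambda_a+\lambda_b)\langle v,v\rangle
 = s(a)s(b)\bigl(\langle E_{ba}v,E_{ba}v\rangle+\langle E_{ab}v,E_{ab}v\rangle\bigr),
$$
so $\lambda_a+\lambda_b$ has the sign of $s(a)s(b)$, or vanishes.

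Read off the four cases. For $a\in{\bf I}_p$, $b\in{\bf I}_s$ we get $\lambda_a\geq-\lambda_b$. For $a\in{\bf I}_p$, $b\in{\bf I}_r$ we get $\lambda_a\leq-\lambda_b$. For $a\in{\bf I}_q$, $b\in{\bf I}_r$ we get $\lambda_a\geq -\lambda_b$. For $a\in{\bf I}_q$, $b\in{\bf I}_s$ we get $\lambda_a\leq-\lambda_b$. Fix $i\in{\bf I}_p$, $j\in{\bf I}_q$, $\mu\in{\bf I}_r$, $\nu\in{\bf I}_s$, all of which exist because $pqrs\neq0$. Chaining gives
$$
 \lambda_i\leq-\lambda_\mu\leq\lambda_j\leq-\lambda_\nu\leq\lambda_i,
$$
so all these quantities are equal. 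Hence every weight of $V$ has the form $c(1,\dots,1,-1,\dots,-1)$ for some $c$, and every inequality above is an equality. Consequently $E_{ab}v=0$ for every odd root vector $E_{ab}$, with $a$ even and $b$ odd or vice versa. The dual-unitary case is identical, since $\dualstar$ only flips the sign on odd generators; this reverses all four inequalities and the chain closes in the same way.

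The main obstacle is the even part, where the norm argument gives no sign constraint for non-compact even roots. Here I use the weight information instead. All weights of $V$ are multiples of $(1,\dots,1,-1,\dots,-1)$, so every root vector $E_{ab}$ with $a\neq b$ shifts weights by $\epsilon_a-\epsilon_b$, which is not of that form; hence $E_{ab}$ kills every weight vector. (Odd root vectors already vanish by the norm argument, and even ones are brackets of odd ones, e.g.\ $E_{ij}=[E_{i\mu},E_{\mu j}]$, which also vanish.) Thus the derived action is trivial on root vectors and the Cartan subalgebra acts by scalars on each weight space. Therefore every one-dimensional weight subspace is a submodule, and by simplicity $V$ is $1$-dimensional.
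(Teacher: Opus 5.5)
Your proposal is correct and follows the paper's proof: the same four norm identities for the odd root vectors, chained as $\lambda_i\leq-\omega_\mu\leq\lambda_j\leq-\omega_\nu\leq\lambda_i$ to force equality, with the dual-unitary case handled by reversing all four signs. Your closing step makes the paper's terse conclusion (``all weight spaces are $1$-dimensional'') explicit. Equality forces every odd root vector, and hence every bracket of odd root vectors, to annihilate each weight vector, so any weight vector spans a submodule.
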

\begin{proof}
Let $V$ be an admissible unitary simple $\mathfrak{gl}_{p+q|r+s}$-module, and let $v\in V$ be a weight vector with weight
$$
 \lambda= \sum_{i=1}^{p}\lambda_i\epsilon_i+ \sum_{j=p+1}^{p+q}\lambda_j\epsilon_j + \sum_{\mu=1}^r \omega_{\mu}\delta_{\mu} 
  + \sum_{\nu=r+1}^{r+s}\omega_{\nu}\delta_{\nu}. 
$$
For $i\in\{1,\ldots,p\}$ and $\mu\in\{1,\ldots,r\}$, we have 
\begin{align*}
 (\lambda_i + \omega_{\mu})\langle v, v\rangle
 &= \langle [E_{i,m+\mu}, E_{m+\mu, i}]v, v\rangle
 = \langle E_{i,m+\mu}E_{m+\mu, i}v, v\rangle +\langle E_{m+\mu, i}E_{i,m+\mu}v, v\rangle 
 \\[.1cm]
 &= -\langle E_{m+\mu, i}v, E_{m+\mu, i}v\rangle -\langle E_{i,m+\mu}v, E_{i,m+\mu}v\rangle\leq 0. 
\end{align*}
It follows that $\lambda_i+\omega_{\mu}\leq 0$ for all relevant  $i, \mu$. Similarly, we have
$$
  \lambda_i+\omega_{\nu}\geq 0, \qquad 
  \lambda_j+\omega_{\mu}\geq 0, \qquad 
  \lambda_j+\omega_{\nu}\leq 0.
$$
Combining these inequalities, we obtain
$$
 \lambda_i\leq -\omega_{\mu}\leq \lambda_j\leq -\omega_{\nu}\leq \lambda_i,
$$
forcing the inequalities to be equalities, so all weight spaces of $V$ are $1$-dimensional. 
Hence, as $V$ is simple, it is $1$-dimensional. The same holds for the dual-unitary simple modules. 
\end{proof}

\end{document}